
\documentclass[11pt, reqno]{amsart}
\usepackage[utf8]{inputenc} 


\usepackage{geometry} 

\usepackage{fullpage}
%

\usepackage{graphicx} 

\usepackage{amsmath}
\usepackage{amssymb}
\usepackage{amsfonts}
\usepackage{amsthm}
\usepackage{mathrsfs} 
\usepackage{multirow}
\usepackage{relsize}
\usepackage{tikz-cd}
\usepackage{tikz}
\usepackage{tensor}
\usepackage{geometry}
\usepackage{textpos}
\usepackage{bm}
\usepackage{cleveref}


\usepackage{booktabs} 
\usepackage{array} 
\usepackage{paralist} 
\usepackage{verbatim} 
\usepackage{enumitem}
\usepackage{subfig} 


\usepackage{lscape}
\usepackage{float}

\usepackage{thmtools}
\usepackage{thm-restate}

\newcommand{\leg}[2]{\left(\frac{#1}{#2}\right)}

\newtheorem{thm}{Theorem}
\newtheorem{lem}[thm]{Lemma}
\newtheorem{cor}[thm]{Corollary}
\newtheorem{prp}[thm]{Proposition}

\newtheorem*{rmk}{Remark}
\newtheorem*{remark*}{Remark}

\setlist[itemize]{leftmargin=*}

\theoremstyle{definition}

\theoremstyle{remark}

\newtheorem*{example}{\bf Example}
\numberwithin{thm}{section}
\numberwithin{prp}{section}
\numberwithin{lem}{section}
\numberwithin{cor}{section}
\numberwithin{equation}{section}
\numberwithin{conjecture}{section}

\crefname{prp}{proposition}{propositions}

\newcommand{\Z}{\mathbb{Z}}
\newcommand{\Q}{\mathbb{Q}}
\newcommand{\N}{\mathbb{N}}
\newcommand{\R}{\mathbb{R}}
\newcommand{\C}{\mathbb{C}}

\newcommand{\Qc}{\mathcal{Q}}
\newcommand{\Dc}{\mathcal{D}}
\newcommand{\Ec}{\mathcal{E}}
\newcommand{\Nc}{\mathcal{N}}
\newcommand{\Rc}{\mathcal{R}}
\newcommand{\Cc}{\mathcal{C}}

\newcommand{\Ac}{\mathcal{A}}
\newcommand{\Sc}{\mathcal{S}}
\newcommand{\Tc}{\mathcal{T}}

\newcommand{\Uc}{\mathcal{U}}

\renewcommand{\a}{\alpha}
\renewcommand{\b}{\beta}

\renewcommand{\d}{\delta}
\newcommand{\e}{\varepsilon}

\renewcommand{\l}{\lambda}
\newcommand{\om}{\omega}

\renewcommand{\th}{\theta}
\newcommand{\z}{\zeta}
\newcommand{\Ga}{\Gamma}
\newcommand{\La}{\Lambda}

\newcommand{\bs}{\backslash}
\newcommand{\cb}[1]{\left\{{#1}\right\}}
\newcommand{\cbd}[2]{\left\{{#1}\; :\; {#2}\right\}}

\newcommand{\Log}{\operatorname{Log}}

\newcommand{\im}{\operatorname{Im}}

\renewcommand{\pmod}[1]{\ \left( \mathrm{mod} \, #1 \right)}
\newcommand{\Pmod}[1]{\ ( \mathrm{mod} \, #1 )}

\newcommand{\flo}[1]{\lfloor #1\rfloor}
\newcommand{\Flo}[1]{\left\lfloor #1\right\rfloor}
\newcommand{\cei}[1]{\lceil #1\rceil}

\newcommand{\rb}[1]{\left({#1}\right)}
\newcommand{\re}{\operatorname{Re}}

\newcommand{\sbe}{\subseteq}

\newcommand{\vb}[1]{\left| {#1} \right|}

\newcommand{\Arg}{\operatorname{Arg}}

\newcommand{\Li}{\operatorname{Li}}

\newcommand{\mc}{\mathcal}
\newcommand{\mf}{\mathfrak}

\newcommand{\sm}{\setminus}

\allowdisplaybreaks

\newcommand{\bdd}{\begin{center}\begin{tikzcd}}
\newcommand{\bd}{\begin{tikzcd}}
\newcommand{\edd}{\end{tikzcd}\end{center}}
\newcommand{\ed}{\end{tikzcd}}
\newcommand{\bdp}{\begin{center}\begin{tikzpicture}}
\newcommand{\edp}{\end{tikzpicture}\end{center}}
\newcommand{\bi}{\begin{itemize}}
\newcommand{\ei}{\end{itemize}}
\newcommand{\bt}{\begin{tikzpicture}}
\newcommand{\et}{\end{tikzpicture}}
\newcommand{\ba}{\[\begin{aligned}}
\newcommand{\ea}{\end{aligned}\]}
\newcommand{\bp}{\begin{pmatrix}}
\newcommand{\ep}{\end{pmatrix}}
\newcommand{\bsm}{\begin{smallmatrix}}
\newcommand{\esm}{\end{smallmatrix}}
\newcommand{\bv}{\begin{vmatrix}}
\newcommand{\ev}{\end{vmatrix}}
\newcommand{\bb}{\begin{bmatrix}}
\newcommand{\eb}{\end{bmatrix}}
\newcommand{\bB}{\begin{Bmatrix}}
\newcommand{\eB}{\end{Bmatrix}}
\newcommand{\bea}{\begin{enumerate}[leftmargin=*,label=\textnormal{(\alph*)}]}
\newcommand{\ber}{\begin{enumerate}[leftmargin=*,label=\textnormal{(\roman*)}]}
\newcommand{\ben}{\begin{enumerate}[leftmargin=*,label=\textnormal{(\arabic*)}, wide, labelindent=0pt, labelwidth=*]}
\newcommand{\ee}{\end{enumerate}}

\newcommand{\hlc}{\color{cyan!80!black}}


\makeatletter
\renewcommand{\boxed}[1]{\text{\fboxsep=.2em\fbox{\m@th$\displaystyle#1$}}}
\makeatother

\title{Asymptotics of parity biases for partitions into distinct parts via Nahm sums}
\author{Kathrin Bringmann}
\author{Siu Hang Man}
\address{University of Cologne, Department of Mathematics and Computer Science, Weyertal 86-90, 50931 Cologne, Germany}
\email{kbringma@math.uni-koeln.de}
\email{sman1@math.uni-koeln.de}

\address{Charles University, Faculty of Mathematics and Physics, Department of Algebra, Sokolovská 49/83, 186 75 Praha 8, Czechia}
\email{shman@karlin.mff.cuni.cz}

\author{Larry Rolen}
\address{Department of Mathematics, 1420 Stevenson Center, Vanderbilt University, Nashville, TN 37240}
\email{larry.rolen@vanderbilt.edu}

\author{Matthias Storzer}
\address{Max Planck Institute for Mathematics, Vivatsgasse 7, 53111 Bonn, Germany}
\email{storzer@mpim-bonn.mpg.de}

\begin{document}

\begin{abstract}
	For a random partition, one of the most basic questions is: what can one expect about the parts which arise? For example, what is the distribution of the parts of random partitions modulo $N$? Since most partitions contain a $1$, and indeed many $1$s arise as parts of a random partition, it is natural to expect a skew towards $1\Pmod N$. This is indeed the case. For instance, Kim, Kim, and Lovejoy recently established ``parity biases'' showing how often one expects partitions to have more odd than even parts. Here, we generalize their work to give asymptotics for biases $\Pmod N$ for partitions into distinct parts. The proofs rely on the Circle Method and give independently useful techniques for analyzing the asymptotics of Nahm-type $q$-hypergeometric series. 
\end{abstract}
\maketitle


\section{Introduction and statement of results}

The study of the distribution of parts in integer partitions has a long history. For instance, many authors have studied counting functions for partitions with restrictions on the possible parts. Here, we are interested in the relative numbers of parts which are in different congruence classes. As most of the parts in most partitions tend to be small, for example, including many $1$s and $2$s, it is natural to expect that the parts are not equidistributed modulo $N\in\N_{\ge2}$. For instance, Kim, Kim, and Lovejoy \cite{KKL2020} showed the following parity bias for partitions modulo $2$, where $p_o(n)$ (resp. $p_e(n)$) denotes the number of partitions of $n$ with a majority of odd parts (resp. even parts):
\begin{equation}\label{eq:pepo} 
	p_o(n)>p_e(n) \quad \text{ for } n\neq2,\quad\quad \lim_{n\rightarrow\infty}\frac{p_o(n)}{p_e(n)}=1+\sqrt2. 
\end{equation}

There are also biases for congruence classes modulo general $N$. A deep analysis of these was given in three related papers by Dartyge, Sarkozy, and Szalay \cite{DS,DSS1,DSS2}, who proved lower bounds on biases for parts of partitions in congruence classes for positive proportions of partitions. This was generalized by Beckwith and Mertens \cite{BW}, who turned these results into asymptotic formulas. Recently, the study of partition part biases for partitions into distinct parts was initiated by Kim, Kim, and Lovejoy \cite{KKL2020}. They conjectured an analogue of the inequality in \eqref{eq:pepo} which was recently proven in \cite{BBDMS}. The goal of this paper is to refine these inequalities to precise, general, asymptotics.

For $N,\ell,b\in\N$, the generating function for the number $a_{N,\ell,b}(n)$ of partitions into $\ell$ distinct parts such that the size of each part is at least $b$ and is congruent to $b\Pmod N$ is given by
\[
	\sum_{n\ge0} a_{N,\ell,b}(n)q^n = \frac{q^{\frac{N\ell(\ell-1)}{2}+b\ell}}{\left(q^N;q^N\right)_\ell}.
\]
Here $(a;q)_r:=\prod_{j=0}^{r-1}(1-aq^j)$ with $r\in\N_0\cup\{\infty\}$ is the usual {\it$q$-Pochhammer symbol}.

Let $N\in\N_{\ge2}$, $K\in\N_0$, $1\le\a,\b\le N$, and $\a\ne\b$. Denote by $d_{\a,\b;N}^{[K]}(n)$ the number of partitions of $n$ into distinct parts such that there are more parts of size congruent to $\a\Pmod N$ than parts of size congruent to $\b\Pmod N$ and such that the size of all parts are greater than $K$. The generating function of $d_{\a,\b;N}^{[K]}(n)$ is given by (throughout we use bold letters for vectors)
\begin{equation}\label{eq:ggf} 
	\mc D_{\alpha,\beta;N}^{[K]}(q) := \sum_{n\geq0} d_{\alpha,\beta;N}^{[K]}(n)q^n = \sum_{\substack{\bm n\in\mc S_{\alpha,\beta}}} \frac{q^{N\cdot H(\bm n)}}{\prod_{j=1}^N \rb{q^N;q^N}_{n_j}},
\end{equation}
where
\[
	\Sc_{\a,\b} := \left\{\bm n=(n_1,\dots,n_N)^T\in\N_0^N : n_\a>n_\b\right\}.
\]
Moreover $H\colon\Z^N\to\Q$ is given by
\begin{equation*}
	H(\bm n) := \tfrac12\bm n^T\bm n + \bm b^T\bm n,\qquad \bm b := \bp\frac1N-\frac12, &\frac2N-\frac12, &\ldots, &\frac12\ep^T + \bm e
\end{equation*}
where
\begin{equation*}
	\bm e = \bm e^{[N,K]} = (e_1,\dots,e_N)^T := \Flo{\tfrac KN}\bm1 + (1,\dots,1,0,\dots,0)^T
\end{equation*}
is such that $\sum_je_j=K$ and $\bm1:=(1,\dots,1)^T\in\Z^N$. Note that we have
\begin{equation}\label{eq:sbj} 
	\sum_{j=1}^N b_j = K+\tfrac 12.
\end{equation}

In this paper we investigate a generalized parity bias modulo $N$ for partitions into distinct parts. Namely, given two congruence classes $\a,\b\Pmod N$, we consider the number of partitions of $n$ into distinct parts with more parts congruent to $\a\Pmod N$ than to $\b\Pmod N$, and vice versa. We study the difference between these two counts, which has generating function
\[
\sum_{n\geq0} \rb{d_{\alpha,\beta;N}^{[K]}(n) - d_{\beta,\alpha;N}^{[K]}(n)} q^n.
\]
The case $N=2$, $\a=1$, $\b=2$, and $K=0$ corresponds to the parity bias problem for partitions into distinct parts which Kim, Kim, and Lovejoy considered in \cite{KKL2020} (the reader is also referred to \cite{Chern2022,KK2021} for related works on partition parity biases modulo $N$). Our main result is {as follows}.

\begin{thm}\label{thm:main}
	We have
	\begin{multline*}
		d_{\alpha,\beta;N}^{[K]}(n) = \frac{e^{\pi\sqrt{\frac n3}}}{2^{K+3}\cdot3^\frac14N^{N-1}n^\frac34}\\
		\times \sum_{\substack{\bm\ell\in(\Z/N\Z)^N\\ NH(\bm\ell)\equiv n\pmod{N}}} \rb{1+\frac{N^2-2{ N}[\ell_\alpha-\ell_\beta]_N+\beta-\alpha+N\rb{e_\beta-e_\alpha}}{2\cdot3^\frac14\sqrt{N}n^\frac14} + O\rb{\frac{1}{\sqrt{n}}}}
	\end{multline*}
	as $n\to\infty$, where $[\ell]_N$ denotes the smallest positive integer congruent to $\ell\pmod{N}$.
\end{thm}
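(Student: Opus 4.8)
\medskip\noindent\textbf{Reduction to residues mod $N$.}
The key structural observation is that $\frac{1}{\prod_j(q^N;q^N)_{n_j}}$ is a power series in $q^N$, while $NH(\bm n)\in\Z$ with $NH(\bm n)\equiv\sum_{j}jn_j\pmod N$ depending only on $\bm n\bmod N$; hence in \eqref{eq:ggf} only $\bm n$ with $NH(\bm n)\equiv n\pmod N$ contribute to $[q^n]$. Writing $\bm\ell:=\bm n\bmod N\in(\Z/N\Z)^N$ and $\tilde{\mc D}_{\bm\ell}(q):=\sum_{\bm n\in\Sc_{\a,\b},\ \bm n\equiv\bm\ell\,(N)}q^{NH(\bm n)}/\prod_j(q^N;q^N)_{n_j}$ gives
\[ d_{\a,\b;N}^{[K]}(n)=\sum_{\substack{\bm\ell\in(\Z/N\Z)^N\\ NH(\bm\ell)\equiv n\,(N)}}[q^n]\,\tilde{\mc D}_{\bm\ell}(q), \]
so it suffices to identify the asymptotics of each $[q^n]\tilde{\mc D}_{\bm\ell}(q)$; this is the sole origin of the $\bm\ell$-sum in the statement, which is simply the bookkeeping of residue classes.

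\medskip\noindent\textbf{Circle Method and the leading term.}
Since $\tilde{\mc D}_{\bm\ell}(q)=q^{NH(\bm\ell_0)}g_{\bm\ell}(q^N)$ with $g_{\bm\ell}$ a power series and $\bm\ell_0\in\{0,\dots,N-1\}^N$ the representative of $\bm\ell$, extracting $[q^n]$ amounts to extracting a coefficient of index $\sim n/N$ from $g_{\bm\ell}(Q)$ ($Q=q^N$), which I attack with the Circle Method in its saddle-point (Wright-type) form. The only dominant arc is $Q\to1$; the minor arcs — including the other roots of unity — are exponentially subdominant, most conveniently via the $q$-binomial evaluation $\sum_m q^{N\binom m2}x^m/(q^N;q^N)_m=(-x;q^N)_\infty$, which exhibits $g_{\bm\ell}$ as a dissection of products of eta/theta-type factors whose dilogarithm constants at the non-principal cusps are strictly smaller. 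Near $Q=e^{-\z}\to1$, a one-variable saddle point shows that each coordinate sum $\sum_{n_j\equiv(\ell_0)_j\,(N)}Q^{\frac12 n_j^2+b_jn_j}/(Q;Q)_{n_j}$ concentrates at $n_j^{\ast}=\tfrac{\log2}{\z}$ with a Gaussian of width $\asymp\z^{-1/2}\asymp n^{1/4}$ (the exponent $-\tfrac{v^2}{2}+\tfrac{\pi^2}{6}-\Li_2(e^{-v})$ equals $\tfrac{\pi^2}{12}$ at $v=\log2$), contributing a factor $\tfrac1N$ per coordinate; the constraint $n_\a>n_\b$ then roughly halves the joint Gaussian of the $(\a,\b)$-pair. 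The key simplification is Euler's reflection $\Li_2(-w)+\Li_2(-1/w)=-\tfrac{\pi^2}{6}-\tfrac12\log^2 w$ (with $w$ dual to $n_\a-n_\b$), which collapses the $(\a,\b)$-block exponent to $\exp(\tfrac{\pi^2}{6\z}+\tfrac{\log^2 w}{2\z})$. One obtains $g_{\bm\ell}(e^{-\z})\sim\tfrac1{2N^N}2^{-K-1/2}\exp(\tfrac{N\pi^2}{12\z})$, and the saddle point in $Q$ at $\z_{\ast}=\tfrac{N\pi}{2\sqrt{3n}}$ yields $e^{\pi\sqrt{n/3}}$, the power $n^{-3/4}$, and — including the factor $N$ that extracting the coefficient of index $\sim n/N$ from $\exp(\tfrac{N\pi^2}{12\z})$ produces, converting $N^{-N}$ into $N^{-(N-1)}$ — exactly the leading term $\tfrac{e^{\pi\sqrt{n/3}}}{2^{K+3}\cdot3^{1/4}N^{N-1}n^{3/4}}$.

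\medskip\noindent\textbf{The secondary term.}
The entire $O(n^{-1/4})$ correction lives in the $(\a,\b)$-block. Setting $s:=n_\a-n_\b$, the Gaussian for $(n_\a,n_\b)$ is centred on $s=0$, and $n_\a>n_\b$ with fixed residues restricts $s$ to $\{[\ell_\a-\ell_\b]_N,[\ell_\a-\ell_\b]_N+N,\dots\}$. Comparing the restricted sum with the full one multiplies the $\tfrac12$ by $1+\bigl(N-2[\ell_\a-\ell_\b]_N+b_\b-b_\a\bigr)\sqrt{\z_{\ast}/(2\pi)}+O(\z_{\ast})$, the three terms coming respectively from the midpoint correction of the step-$N$ Euler--Maclaurin sum, the displacement of the cutoff $[\ell_\a-\ell_\b]_N$ from $s=0$, and the mean of the $s$-odd part of the linear exponent $-\z(b_\a n_\a+b_\b n_\b)$ against the half-Gaussian. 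Since $\sqrt{\z_{\ast}/(2\pi)}=\tfrac{\sqrt N}{2\cdot3^{1/4}n^{1/4}}$ and $b_\b-b_\a=\tfrac{\b-\a}{N}+e_\b-e_\a$, this is precisely $1+\tfrac{N^2-2N[\ell_\a-\ell_\b]_N+\b-\a+N(e_\b-e_\a)}{2\cdot3^{1/4}\sqrt N\,n^{1/4}}$; all further contributions (higher saddle-point and Euler--Maclaurin terms, the $O(\z_{\ast})$ above, the subleading part of the $Q$-saddle) are $O(n^{-1/2})$. Summing over the $N^{N-1}$ admissible $\bm\ell$ completes the proof.

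\medskip\noindent\textbf{The main obstacle.}
The substance is making this two-parameter saddle point \emph{uniform} — in $Q$ near $1$, and, in the $q$-binomial formulation, in the auxiliary variable $w$ on a contour of radius $1+O(n^{-1/4})$ hugging the pole of $\tfrac1{w(w-1)}$ at $w=1$ — and carrying every expansion far enough to isolate the $O(n^{-1/4})$ coefficient, in particular handling the interaction of the \emph{strict} inequality $n_\a>n_\b$ with the residue classes. Equally delicate is the minor-arc bound for a Nahm-type $q$-series: the naive estimate $|(Q;Q)_{n_j}|\ge(|Q|;|Q|)_{n_j}$ only yields $|\tilde{\mc D}_{\bm\ell}(Q)|\le\tilde{\mc D}_{\bm\ell}(|Q|)$, which does not separate major from minor arcs, so one must exploit the modular/theta structure exposed by the $q$-binomial theorem (or sharper dilogarithm estimates) to see that all but the principal arc are exponentially subdominant.
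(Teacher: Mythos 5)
Your overall architecture is the same as the paper's (split by residue classes $\bm\ell$, saddle-point expansion around $n_j\approx\log(2)/\zeta$, Wright-type circle method with saddle $\zeta_*=\tfrac{N\pi}{2\sqrt{3n}}$), and your bookkeeping of constants is consistent with the final formula. But at the two places where the real work lies, your sketch has genuine gaps. First, the arc estimates: the holomorphic saddle-point expansion you use is only valid in an extremely thin cone around the positive real axis (in the paper's normalization $y\ll\e^{1+\l+\d}$, at best $y\ll\e^{1/3+\d}$), which is far smaller than a fixed major arc $|y|\le\theta$; the paper needs a second, non-holomorphic expansion (centering at $\log(2)/\e$ instead of $\log(2)/z$, Propositions \ref{prp:sewr}--\ref{prp:sawr}) together with Lemma \ref{lem:sy} and Proposition \ref{prp:gKwr} to cover the intermediate range, and then a separate elementary argument for the minor arc (Proposition \ref{prp:mae}: when $\Arg(q)$ is not tiny, a positive proportion of the factors satisfy $|1-q^k|>1$ and can be discarded, reducing to the tail bound of Proposition \ref{prp:ore}). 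Your proposed substitute -- the $q$-binomial theorem exhibiting $g_{\bm\ell}$ as theta/eta-type products -- does not apply as stated, because the constraint $n_\alpha>n_\beta$ destroys the product structure; your fix via an auxiliary variable $w$ dual to $n_\alpha-n_\beta$ with a contour hugging the pole at $w=1$ is only named, and you yourself concede that the required uniformity in $(Q,w)$ is unresolved. So the assertion that ``all but the principal arc are exponentially subdominant'' is not established, and (as you note) the trivial bound does not separate the arcs; this is a missing proof, not a routine detail. You also implicitly assume concentration on the narrow range $\Nc_{\e,\l}$ without bounding the tail (the paper's Proposition \ref{prp:ore}).

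Second, the $n^{-1/4}$ coefficient. You attribute it to three effects (Euler--Maclaurin midpoint, displacement of the cutoff $[\ell_\alpha-\ell_\beta]_N$, and the linear exponent $\bm b$), but you omit the order-$\sqrt z$ correction coming from the expansion of the $q$-Pochhammer factors themselves: the $\Li_0(\tfrac12)$/Bernoulli term produces, in the paper's notation, the $\sum_j u_j^3/3$ part of $C_1(\bm u)$, which a priori contributes at exactly the same order $n^{-1/4}$. In the paper this contribution is shown to vanish only after integrating over the half-space $u_\alpha\ge u_\beta$ by an antisymmetry/symmetrization argument; without accounting for it, the coefficient you write down is numerically correct but unjustified by your derivation. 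Relatedly, turning the per-summand expansions into an expansion of $g^{[K,1]}_{\a,\b;N,\bm\ell}$ requires summing over a lattice region cut out by both congruence conditions and the strict inequality, which is where the multidimensional Euler--Maclaurin machinery (the paper's Lemma \ref{lem:Gjua}, with the coefficients $W_r$ and $V_{j,r}$) enters; your one-variable heuristic in the $(\a,\b)$-block would need to be upgraded to that level of rigor, with error terms uniform in $\bm u$, before the stated $O(n^{-1/2})$ error can be claimed.
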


\begin{remark*}
	Sums of the shape \eqref{eq:ggf}, i.e., sums over (partial) lattices of $q$ raised to quadratic polynomials divided by products of Pochhammer symbols, have been the subject of many recent works. In particular, they are important in knot theory, algebraic $K$-theory, physics, and the intersections of these subjects with quantum modular forms (see e.g. \cite{CGZ,GZqknots,KS,Nahm}, a more complete list of references and such connections is given in \cite{GZ2021}). They also arise (thought not always named ``Nahm sum'') in $q$-series and combinatorics, for example, in \cite{Andrews, KKL2020}. Recently, Garoufalidis and Zagier \cite{GZ2021} investigated a general class of Nahm sums, but this does not allow us to study \eqref{eq:ggf} particularly due to more general subset of a lattice which we consider. Our analytic methods extend their methods, and are analytically flexible. Thus, they may be more broadly useful in the study of such sums when they arise in applications to combinatorics or knot theory. 
\end{remark*}

If $N=2$ or $N\ge 5$, then we can further simplify the asymptotic formula. In particular, in this case the asymptotic formula does not depend on the congruence class of $n\pmod{N}$. 

\begin{thm}\label{thm:main2} 
Suppose that $N=2$ or $N\ge5$. Then we have, as $n\to\infty$,
\begin{equation*}
d_{\alpha,\beta;N}^{[K]}(n) = \frac{e^{\pi\sqrt{\frac n3}}}{2^{K+3}\cdot 3^{\frac 14} n^\frac34} \rb{1+\frac{{ -N}+\beta-\alpha+N\rb{e_\beta-e_\alpha}}{2\cdot3^\frac14\sqrt{N}n^\frac14} + O\rb{\frac{1}{\sqrt{n}}}}.
\end{equation*}
\end{thm}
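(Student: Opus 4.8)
The plan is to derive Theorem~\ref{thm:main2} from Theorem~\ref{thm:main} by carrying out the sum over $\bm\ell\in(\Z/N\Z)^N$ explicitly and showing that, under the hypothesis $N=2$ or $N\ge5$, the resulting quantity is independent of the residue class of $n\pmod N$. First I would observe that the main-term coefficient of $e^{\pi\sqrt{n/3}}/(2^{K+3}3^{1/4}N^{N-1}n^{3/4})$ in Theorem~\ref{thm:main} is a sum over the set
\[
	\mc L_n := \cbd{\bm\ell\in(\Z/N\Z)^N}{NH(\bm\ell)\equiv n\pmod N},
\]
and the quantity $NH(\bm\ell)\pmod N$ depends on $\bm\ell$ only through $\tfrac N2\bm\ell^T\bm\ell + N\bm b^T\bm\ell\pmod N$. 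Using $N\bm b = (1,2,\dots,N)^T - \tfrac N2\bm1 + N\bm e$ and the fact that $N e_j\equiv 0\pmod N$, one reduces the congruence condition to $\sum_j j\ell_j + \tfrac N2\sum_j(\ell_j^2-\ell_j)\equiv n\pmod N$; since $\ell_j^2-\ell_j$ is always even, the term $\tfrac N2(\ell_j^2-\ell_j)$ contributes $0\pmod N$ unless one is careful about parity, so the condition becomes (for $N$ odd, or after tracking the $2$-adic part for $N$ even) a single linear congruence $\sum_j j\ell_j\equiv n\pmod N$, possibly twisted.

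Next I would split the coefficient into its two pieces. The constant piece contributes $|\mc L_n|$. The key point is that for a linear (or affine-linear) congruence in the $N$ variables $\ell_1,\dots,\ell_N$ over $\Z/N\Z$ with at least one coefficient a unit — here $j=1$ gives coefficient $1$ — the number of solutions is exactly $N^{N-1}$, independent of $n$. Thus $|\mc L_n| = N^{N-1}$, which cancels the $N^{N-1}$ in the denominator and produces the leading $1$ in Theorem~\ref{thm:main2}. For the subleading piece I would need to evaluate
\[
	\sum_{\bm\ell\in\mc L_n}\rb{N^2 - 2N[\ell_\alpha-\ell_\beta]_N + \beta-\alpha + N(e_\beta-e_\alpha)}.
\]
The terms $N^2+\beta-\alpha+N(e_\beta-e_\alpha)$ are constant in $\bm\ell$, so they contribute $N^{N-1}(N^2+\beta-\alpha+N(e_\beta-e_\alpha))$. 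The remaining task is to compute $\sum_{\bm\ell\in\mc L_n}[\ell_\alpha-\ell_\beta]_N$, and the claim of Theorem~\ref{thm:main2} is equivalent to this sum equalling $N^{N-1}\cdot\tfrac{N+1}{2}$, again independent of $n$ — because then $N^2 - 2N\cdot\tfrac{N+1}2 = N^2-N^2-N = -N$, matching the stated numerator $-N+\beta-\alpha+N(e_\beta-e_\alpha)$.

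So the crux is the uniform-distribution statement: for $N=2$ or $N\ge5$, the value $[\ell_\alpha-\ell_\beta]_N$ is equidistributed over $\{1,\dots,N\}$ as $\bm\ell$ ranges over $\mc L_n$, giving average $\tfrac{N+1}2$. I would prove this by fixing the difference $\ell_\alpha-\ell_\beta\equiv c\pmod N$ and counting how many $\bm\ell\in\mc L_n$ realize it: this amounts to solving the linear congruence $\sum_j j\ell_j\equiv n$ together with $\ell_\alpha-\ell_\beta\equiv c$, a system of two affine-linear equations in $N$ unknowns over $\Z/N\Z$. As long as these two equations are ``independent'' in the appropriate sense — which needs $\gcd$ considerations on the coefficient matrix $\begin{psmallmatrix}1&2&\cdots&N\\ \cdots\end{psmallmatrix}$ restricted to columns $\alpha,\beta$ and others — the number of solutions is $N^{N-2}$ for every $c$, hence the count is independent of both $c$ and $n$, yielding the claimed equidistribution and average $\tfrac{N+1}{2}$. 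I expect the main obstacle to be exactly this independence check: one must verify that among $1,\dots,N$ one can always find, besides the unit coefficient at position $1$, enough flexibility so that the system has full rank for every choice of $\alpha\ne\beta$; this is where the excluded cases $N=3,4$ enter (there the two equations can become dependent or the second equation can force an uneven distribution of $[\ell_\alpha-\ell_\beta]_N$), and where the case $N=2$ must be handled as a small separate check. Once equidistribution is established, substituting back and collecting the error term $O(n^{-1/2})$ verbatim from Theorem~\ref{thm:main} completes the proof.
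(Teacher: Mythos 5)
Your proposal is correct and follows essentially the same route as the paper: both deduce the result from \Cref{thm:main} by showing that the number of admissible $\bm\ell$ is $N^{N-1}$ and that $[\ell_\alpha-\ell_\beta]_N$ is equidistributed (average $\tfrac{N+1}{2}$), which is exactly the content of the paper's counting statement (\Cref{lem:lc}), with $N=2$ checked separately and $N=3,4$ failing for the same gcd/rank reason you identify. Your packaging via the two-equation linear system over $\Z/N\Z$ is just a minor variant of the paper's count that fixes $(\ell_\alpha,\ell_\beta)$ and counts the $N^{N-3}$ completions.
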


Kim, Kim, and Lovejoy \cite[Section 6]{KKL2020} conjectured that for $n\ge20$ there are more partitions into distinct parts with more odd parts than even parts than vice versa. The conjecture was first proved \cite{BBDMS} using combinatorial arguments, but an asymptotic formula for the parity bias had not been found. \Cref{thm:main2} gives a new asymptotic formula for the parity bias as a corollary. 

\begin{cor}\label{cor:pb}
	We have, as $n\to\infty$,
	\ba
		d_{1,2;2}^{[K]}(n) - d_{2,1;2}^{[K]}(n) = \frac{(-1)^Ke^{\pi\sqrt{\frac n3}}}{2^{K+3}\sqrt{6}n} \rb{1+O\rb{n^{-\frac14}}}.
	\ea
	In particular, we have, as $n\to\infty$,
	\ba
		\frac{d_{1,2;2}^{[K]}(n)}{d_{2,1;2}^{[K]}(n)} \to 1.
	\ea
\end{cor}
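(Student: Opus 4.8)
The plan is to obtain \Cref{cor:pb} as a direct consequence of \Cref{thm:main2} with $N=2$, so the argument is essentially careful bookkeeping. First I would record the shape of $\bm e=\bm e^{[2,K]}$: from its definition $\bm e=\lfloor K/2\rfloor\bm1+(1,\dots,1,0,\dots,0)^T$ with $\sum_j e_j=K$, one gets $\bm e=(\lceil K/2\rceil,\lfloor K/2\rfloor)^T$, so that $e_1-e_2=K-2\lfloor K/2\rfloor$ equals $0$ for $K$ even and $1$ for $K$ odd. In particular $1+2(e_2-e_1)=(-1)^K$; this is the only point at which the parity of $K$ enters, and it accounts for the sign $(-1)^K$ in the statement.

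Next I would substitute $(\alpha,\beta)=(1,2)$ and $(\alpha,\beta)=(2,1)$ into \Cref{thm:main2}. The leading terms $\frac{e^{\pi\sqrt{n/3}}}{2^{K+3}3^{1/4}n^{3/4}}$ coincide, hence cancel in $d_{1,2;2}^{[K]}(n)-d_{2,1;2}^{[K]}(n)$, so the bias is visible only at the next order. The numerators $-N+\beta-\alpha+N(e_\beta-e_\alpha)$ of the $n^{-1/4}$-order correction terms evaluate, with $N=2$, to $-1+2(e_2-e_1)$ and to $-3-2(e_2-e_1)$, whose difference is $2+4(e_2-e_1)=2(-1)^K$. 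Dividing by $2\cdot3^{1/4}\sqrt{2}\,n^{1/4}$, multiplying by the common leading factor, and using $3^{1/4}\cdot3^{1/4}=\sqrt3$, $\sqrt3\cdot\sqrt2=\sqrt6$, and $n^{3/4}\cdot n^{1/4}=n$, gives exactly the main term $\frac{(-1)^Ke^{\pi\sqrt{n/3}}}{2^{K+3}\sqrt6\,n}$. The two $O(n^{-1/2})$ errors in \Cref{thm:main2}, multiplied by the leading factor, each contribute $O(e^{\pi\sqrt{n/3}}n^{-5/4})$, i.e.\ $O(n^{-1/4})$ relative to the main term; this yields the first displayed asymptotic.

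For the statement about the ratio I would write $\frac{d_{1,2;2}^{[K]}(n)}{d_{2,1;2}^{[K]}(n)}=1+\frac{d_{1,2;2}^{[K]}(n)-d_{2,1;2}^{[K]}(n)}{d_{2,1;2}^{[K]}(n)}$ and observe that the numerator of the second term is $O(e^{\pi\sqrt{n/3}}n^{-1})$ by the first part, while the denominator is asymptotic to $\frac{e^{\pi\sqrt{n/3}}}{2^{K+3}3^{1/4}n^{3/4}}$ by \Cref{thm:main2}, so the second term is $O(n^{-1/4})\to0$.

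There is no genuine obstacle here; the one mildly delicate point is that the two generating-function asymptotics agree to leading order, so one must carry \Cref{thm:main2} to the $n^{-1/4}$ term in order to detect the bias at all, and one must track the constants carefully, in particular the identity $1+2(e_2-e_1)=(-1)^K$.
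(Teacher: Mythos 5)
Your derivation is correct and is exactly how the paper obtains \Cref{cor:pb}: it is a direct substitution of $(N,\alpha,\beta)=(2,1,2)$ and $(2,2,1)$ into \Cref{thm:main2}, with the leading terms cancelling, the identity $1+2(e_2-e_1)=(-1)^K$ producing the sign, and the $O(n^{-1/2})$ errors being dominated by the $n^{-1/4}$-order bias term. The constant bookkeeping ($2\cdot3^{1/4}\sqrt2\,3^{1/4}=2\sqrt6$, $n^{3/4}\cdot n^{1/4}=n$) and the ratio argument both check out.
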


\begin{rmk}
	\Cref{cor:pb} answers Problem 6.1 of \cite{BBDMS} on explicit inequalities between these quantities (after a small correction). Problem 6.1 of \cite{BBDMS} conjectured that for all $n\ge14$:
	\[
		\begin{cases}
			d_{1,2;2}^{[1]}(n) > d_{2,1;2}^{[1]}(n) & \text{if }n\equiv0\Pmod2,\\
			\vspace{-.4cm}\\
			d_{1,2;2}^{[1]}(n) < d_{2,1;2}^{[1]}(n) & \text{if }n\equiv1\Pmod2.
		\end{cases}
	\]
	While this conjecture is true for many small values, it does not hold in general. \Cref{cor:pb} provides a modified version, showing that $d_{1,2;2}^{[1]}(n)<d_{2,1;2}^{[1]}(n)$ for $n$ sufficiently large. 
\end{rmk}

For $N\in\{3,4\}$, the asymptotics of the parity bias can indeed depend on the residue classes $n\Pmod N$. We use the following corollary to \Cref{thm:main} to illustrate this phenomenon.

\begin{cor}\label{cor:N3}
We have the asymptotic formulas, as $n\to\infty$,
\ba
d_{1,2;3}^{[0]}(n) - d_{2,1;3}^{[0]}(n) =
\begin{cases}
\frac{e^{\pi\sqrt{\frac n3}}}{n} \rb{\frac 1{24}+O\rb{n^{-\frac 14}}} & \text{ if } n\equiv 0 \pmod{3},\\ \frac{e^{\pi\sqrt{\frac n3}}}{n} \rb{\frac 16+O\rb{n^{-\frac 14}}} & \text{ if } n\equiv 1 \pmod{3},\\ \frac{e^{\pi\sqrt{\frac n3}}}{n} \rb{-\frac 1{12} + O\rb{n^{-\frac 14}}} & \text{ if } n\equiv 2 \pmod{3}.
\end{cases}
\ea
\end{cor}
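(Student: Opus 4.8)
\medskip
\noindent\textbf{Proof plan for \Cref{cor:N3}.}
The plan is to specialize \Cref{thm:main} to $N=3$, $\alpha=1$, $\beta=2$, $K=0$, and to subtract the two resulting expansions for $d_{1,2;3}^{[0]}(n)$ and $d_{2,1;3}^{[0]}(n)$. The first point I would stress is that the index set $\cbd{\bm\ell\in(\Z/3\Z)^3}{3H(\bm\ell)\equiv n\pmod{3}}$ appearing in \Cref{thm:main} is independent of the ordered pair $(\alpha,\beta)$. Hence, upon subtracting, the leading terms (the ``$1$'' inside the parentheses) cancel term by term, as do the constants ``$N^2$'', and only the difference of the second-order terms survives. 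Since this index set is finite and of bounded size as $n$ varies, summing the uniform errors $O(n^{-\frac12})$ and multiplying by the prefactor $\frac{e^{\pi\sqrt{\frac n3}}}{2^{3}\cdot3^\frac14\cdot3^{2}\cdot n^\frac34}$ produces an error of order $\frac{e^{\pi\sqrt{\frac n3}}}{n}O(n^{-\frac14})$, which is exactly the shape claimed.

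Next I would evaluate the surviving main term. For $K=0$ one has $\bm e=\bm0$, so $e_\beta-e_\alpha=0$, while $\beta-\alpha=1$; the contribution of a fixed $\bm\ell$ to the difference of the second-order terms is therefore
\[
	\frac{-2N[\ell_\alpha-\ell_\beta]_N+2N[\ell_\beta-\ell_\alpha]_N+2(\beta-\alpha)}{2\cdot3^\frac14\sqrt{N}\,n^\frac14} = \frac{2-6\rb{[\ell_1-\ell_2]_3-[\ell_2-\ell_1]_3}}{2\cdot3^\frac14\sqrt{3}\,n^\frac14}.
\]
Collecting the constants via $\frac{1}{2^{3}\cdot3^\frac14\cdot3^{2}}\cdot\frac{1}{2\cdot3^\frac14\sqrt{3}}=\frac{1}{432}$ together with $n^\frac34\cdot n^\frac14=n$, this yields
\[
	d_{1,2;3}^{[0]}(n)-d_{2,1;3}^{[0]}(n) = \frac{e^{\pi\sqrt{\frac n3}}}{n}\rb{\frac{1}{432}\sum_{\substack{\bm\ell\in(\Z/3\Z)^3\\ 3H(\bm\ell)\equiv n\pmod{3}}}\rb{2-6\rb{[\ell_1-\ell_2]_3-[\ell_2-\ell_1]_3}}+O\rb{n^{-\frac14}}}.
\]

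Finally I would identify the index set and evaluate the finite sum. Expanding the definitions of $H$ and $\bm b$ with $N=3$ and $\bm e=\bm0$ gives $3H(\bm\ell)=\tfrac32\sum_{j}(\ell_j^2-\ell_j)+\ell_1+2\ell_2+3\ell_3$; since $\tfrac32(\ell^2-\ell)\equiv0\pmod{3}$ for every $\ell\in\{0,1,2\}$, the congruence $3H(\bm\ell)\equiv n\pmod{3}$ reduces to $\ell_1-\ell_2\equiv n\pmod{3}$. In particular the index set always consists of exactly $3\cdot3=9$ vectors, and the summand is constant on this set, equal to a value governed by $[\ell_1-\ell_2]_3-[\ell_2-\ell_1]_3$. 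Recalling that $[0]_3=3$, this difference equals $3-3=0$ for $n\equiv0$, $1-2=-1$ for $n\equiv1$, and $2-1=1$ for $n\equiv2\pmod{3}$, so the inner sum equals $9\cdot2=18$, $9\cdot8=72$, and $9\cdot(-4)=-36$, respectively; dividing by $432$ gives $\tfrac{1}{24}$, $\tfrac16$, and $-\tfrac{1}{12}$, as asserted.

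Since everything reduces to a finite computation once \Cref{thm:main} is in hand, I do not expect a genuine obstacle here; the only things to watch are that the error term $O(n^{-\frac12})$ in \Cref{thm:main} is uniform in $\bm\ell$ (so that it survives summation over the bounded index set), and that $[\,\cdot\,]_N$ takes the value $N$, not $0$, on residues $\equiv0\pmod{N}$. The three distinct constants $\tfrac{1}{24},\tfrac16,-\tfrac{1}{12}$ exhibit the dependence on $n\pmod{3}$ mentioned above.
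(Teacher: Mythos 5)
Your proposal is correct and is essentially the argument the paper intends: the corollary is a direct specialization of \Cref{thm:main} with $N=3$, $K=0$ (so $\bm e=\bm 0$), where the common index set $\{\bm\ell: 3H(\bm\ell)\equiv n\pmod 3\}$ makes the leading terms cancel, the congruence reduces to $\ell_1-\ell_2\equiv n\pmod 3$ giving $9$ vectors with constant summand, and your constants $\tfrac{1}{24},\tfrac16,-\tfrac{1}{12}$ and error bookkeeping all check out, including the convention $[0]_3=3$.
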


The main tool for the proof of \Cref{thm:main} is the Circle Method. We briefly outline our strategy. Firstly, we look at individual summands in \eqref{eq:ggf}, and derive two different asymptotic series for them as $q$ approaches a root of unity. The first such series is obtained by considering the summands as holomorphic functions in $z$ ($q=e^{-z}$), and deriving an asymptotic expansion as $z\to0$ from the right half-plane. Unfortunately, the asymptotic expansion only holds for a narrow cone around the positive real line and is insufficient for the computation of the major arc contribution. Thus we derive a second asymptotic series by treating the real and imaginary parts of $z$ separately. This asymptotic series is no longer holomorphic in $z$, but holds in a sufficiently large region. To derive an asymptotic expansion for $\Dc_{\a,\b;N}^{[K]}(q)$, we sum over the asymptotic series for the summands in \eqref{eq:ggf}, ignoring the summands with negligible contribution. An asymptotic expansion for $d_{\a,\b;N}^{[K]}(n)$ is then obtained using the Circle Method. \Cref{thm:main} is derived by explicit evaluation of the first two terms in the asymptotic expansion for $d_{\a,\b;N}^{[K]}(n)$.

The paper is organized as follows. In \Cref{section:prelim}, we give preliminaries on asymptotics, such as the asymptotic formula for $-\Log((q;q)_\infty)$ and the Euler--Maclaurin summation formula. In \Cref{section:afs}, we derive the two asymptotic expansions for summands appearing in \eqref{eq:ggf}. In \Cref{section:error}, we prove bounds for the terms we ignore in the derivation of the asymptotic expansion of $\Dc_{\a,\b;N}^{[K]}(q)$, as well as the minor arc contributions. In \Cref{section:afp}, we use the results obtained in previous sections to derive asymptotic expansions for $\Dc_{\a,\b;N}^{[K]}(q)$ and $d_{\a,\b;N}^{[K]}(n)$. In \Cref{section:pot}, we evaluate the asymptotic expansion of $d_{\a,\b;N}^{[K]}(n)$ explicitly, and prove Theorems \ref{thm:main} and \ref{thm:main2}. In \Cref{section:nx}, we give numerical examples illustrating the biases we prove.

\section*{Acknowledgments}

The first and the second author have received funding from the European Research Council (ERC) under the European Union’s Horizon 2020 research and innovation programme (grant agreement No. 101001179). The second author was also supported by the Czech Science Foundation GAČR grant 21-00420M, and the OP RDE project No. CZ.02.2.69/0.0/0.0/18\_053/0016976 International mobility of research, technical and administrative staff at the Charles University. This work was supported by a grant from the Simons Foundation (853830, LR). The third author is also grateful for support from a 2021-2023 Dean's Faculty Fellowship from Vanderbilt University and to the Max Planck Institute for Mathematics in Bonn for its hospitality and financial support. The fourth author has been supported by the Max-Planck-Gesellschaft.\\

\section*{Notation}

For the readers convenience we list the notation that is used in the paper. We always treat $N\in\N_{\ge2}$, $K\in\N_0$ and $1\le\a,\b\le N$, $\a\ne\b$ as fixed parameters.
\begin{itemize}
	\item For $z\in\C$ with $\re(z)>0$, we write $z = \e(1+iy)$ with $\e>0$ and $y\in\R$, and $q = e^{-z} \in \C$.
	
	\item The constant $\l$ is fixed, real, and satisfies $-\frac23<\l<-\frac12$.
	
	\item For $\ell\in\Z$, we write $[\ell]_N$ to denote the smallest positive integer congruent to $\ell\Pmod N$.
	
	\item We define the set $\Sc_{\a,\b}:=\{\bm n=(n_1,\dots,n_N)^T\in\N_0^N:n_\a>n_\b\}$.
	
	\item We write $\bm\ell=(\ell_1,\dots,\ell_N)$ for an element in $(\Z/N\Z)^N$. We always assume $0\le\ell_j<N$.
	
	\item For $\bm\ell=(\ell_1,\dots,\ell_N)\in(\Z/N\Z)^N$, we define $\Sc_{\a,\b;N,\bm\ell}:=\{\bm n\in\Sc_{\a,\b}:\bm n\equiv\bm\ell\Pmod N\}$.
	
	\item We let $\Nc_{\e,\l}:=\{\bm n\in\N_0^N:\bm n=\frac{\log(2)}{\e}\bm1+\bm\mu,|\bm\mu|\le\e^\l\}$ and write $\bm\mu=\bm n-\frac{\log(2)}{\e}\bm1$.
	
	\item For $\bm n\in\N_0^N$, we define $\bm u\in\C^N$ entrywise by $n_j=\frac{\log(2)}{z}+\frac{u_j}{\sqrt z}$, giving a map $\bm n\mapsto\bm u$. The bijective image of $\Sc_{\a,\b;N,\bm\ell}$ (resp. $\Nc_{\e,\l}$) under the map $\bm n\mapsto\bm u$ is denoted $\Tc_{\a,\b;N,\bm\ell}$ (resp. $\Uc_{\e,\l}$). We also define $\bm v\in\C^N$ entrywise by $n_j=\frac{\log(2)}\e+\frac{v_j}{\sqrt z}$.
	
	\item The functions $C_r$, $\La$, and $D_r$, are given as
	\begin{align*}
		\sum_{r\ge0} C_r(\bm u)z^\frac r2 &:= \exp(\phi(\bm u,z)),\qquad\text{with}\quad \phi(\bm u,z) := -\bm b^T\bm u\sqrt{z} - \frac{Nz}{24} + \sum_{j=1}^N \xi\left(\frac{u_j}{\sqrt{z}},z\right),\\
		\text{where}\qquad\xi(\nu,z) &\hspace{.1cm}= -\sum_{r=2}^{R-1} \left(B_r(-\nu)-\d_{r,2}\nu^2\right)\Li_{2-r}\left(\frac12\right)\frac{z^{r-1}}{r!} + O\left(z^{R-1}\right) \quad (R\in\N),\\
		\La(y) &:= N\left(\frac{\pi^2}{6}-\frac{\log(2)^2(1+iy)^2}{2}-\Li_2\left(2^{-(1+iy)}\right)\right),\\
		\sum_{r\ge0} D_r(\bm v,y)z^\frac r2 &:= \exp(\phi(\bm v,z)),\qquad\text{with}\quad \phi(\bm v,z) := -\bm b^T\bm v\sqrt z - \frac{Nz}{24} + \sum_{j=1}^N \xi_y\rb{\frac{v_j}{\sqrt{z}},z},\\
		\text{where}\qquad \xi_y\rb{\nu,z} &\hspace{.1cm}= - \sum_{r=2}^{R-1} \rb{B_r\rb{-\nu} - \delta_{r,2} \nu^2} \Li_{2-r}\rb{2^{-(1+iy)}} \frac{z^{r-1}}{r!} + O\rb{z^{R-1}} \quad (R\in\N).
	\end{align*}
	
	\item The functions $\Dc_{\a,\b;N}^{[K]}$ and $\Dc_{\a,\b;N,\bm\ell}^{[K]}$ are defined as
	\begin{align*}
		\Dc_{\a,\b;N}^{[K]}(q) &:= \sum_{n\ge0} d_{\a,\b;N}^{[K]}(n)q^n = \sum_{\bm n\in\Sc_{\a,\b}} \frac{q^{N\cdot H(\bm n)}}{\prod_{j=1}^N \left(q^N;q^N\right)_{n_j}},\\
		\Dc_{\a,\b;N,\bm\ell}^{[K]}(q) &:= \sum_{\bm n\in\Sc_{\a,\b;N,\bm\ell}} \frac{q^{NH(\bm n)}}{\prod_{j=1}^N \left(q^N;q^N\right)_{n_j}}.
	\end{align*}
	
	\item The functions $f_{\a,\b;N,\bm\ell}^{[K]}$, $g_{\a,\b;N,\bm\ell}^{[K]}$, $g_{\a,\b;N,\bm\ell}^{[K,1]}$, and $g_{\a,\b;N,\bm\ell}^{[K,2]}$ are given as
	\begin{align*}
		f_{\a,\b;N,\bm\ell}^{[K]}(z) &:= \sum_{\bm n\in\Sc_{\a,\b;N,\bm\ell}} \frac{e^{-{NH}(\bm n)z}}{\prod_{j=1}^N \left(e^{-Nz};e^{-Nz}\right)_{n_j}},\qquad g_{\a,\b;N,\bm\ell}^{[K]}(z) := f_{\a,\b;N,\bm\ell}^{[K]}\leg zN,\\
		g_{\a,\b;N,\bm\ell}^{[K,1]}(z) &:= \sum_{\bm n\in\Sc_{\a,\b;N,\bm\ell}\cap\Nc_{\e,\l}} \frac{e^{-H(\bm n)z}}{\prod_{j=1}^N \left(e^{-z};e^{-z}\right)_{n_j}},\quad g_{\a,\b;N,\bm\ell}^{[K,2]}(z) := \sum_{\bm n\in\Sc_{\a,\b;N,\bm\ell}\sm\Nc_{\e,\l}} \frac{e^{-H(\bm n)z}}{\prod_{j=1}^N \left(e^{-z};e^{-z}\right)_{n_j}}.
	\end{align*}
	
	\item For $\bm u\in\C^N$, we let $u_\a$ be the $\a$-th entry of $\bm u$, and $\bm{u_{[1]}}$ be the remaining $N-1$ entries. For convenience, we write $\bm u=(\bm{u_{[1]}},u_\a)$. Analogously, we write $\bm{\mu_{[1]}}$ for the corresponding $N-1$ entries of $\bm\mu$. For $1\le c \le N$, $c\ne\a$, we let $u_c$ (resp. $u_\a$) be the $c$-th (resp. $\a$-th) entry of $\bm u$, and $\bm{u_{[2]}}$ for the remaining $N-2$ entries. For convenience, we write $\bm u=(\bm{u_{[1]}},u_\a)=(\bm{u_{[2]}},u_c,u_\a)$.
	
	\item For fixed $\bm\ell\in(\Z/N\Z)^N$, the sets $\mf u_\a(\bm{u_{[1]}})$, $\mf u_c$, and $\mc U_{[1]}$ are defined as
	\begin{align*}
		\mf u_\alpha\left(\bm{u_{[1]}}\right) &:= \cbd{u_\beta+t\sqrt{z}}{t\in {[\ell_\alpha-\ell_\beta]_N}+N\N_0},\quad \mf u_c := \cbd{-\frac{\log (2)}{\sqrt{z}} + t\sqrt{z}}{t\in\ell_c+N\N_0}{\hlc ,}\\
		\mc U_{\bm{[1]}} &:= \cbd{\bm{u_{[1]}} \in \rb{-\frac{\log(2)}{\sqrt{z}}+\R\sqrt{z}}^{N-1}}{\vb{\bm{\mu_{[1]}}} \le \e^\l}.
	\end{align*}
	
	\item The functions $G_j$ and $G_{j,\a}$ are given by
	\begin{align*}
		G_j(\bm u) := e^{\frac{\pi^2N}{12z}} C_j(\bm u) e^{-\bm u^T\bm u},\qquad G_{j,\a}\left(\bm{u_{[1]}}\right) := \sum_{u_\a\in\mf u_\a\left(\bm{u_{[1]}}\right)} G_j\left(\bm{u_{[1]}},u_\a\right).
	\end{align*}
	
	\item The coefficients $E_{\bm\ell,r}$, $V_{j,r}$ and $W_r$ are defined as, with $R\in\N$,
	\begin{align*}
		g_{\a,\b;N,\bm\ell}^{[K,1]}(z) &=: \frac{e^\frac{\pi^2N}{12z}}{2^{K+\frac12}\pi^\frac N2}\sum_{r=0}^{R-1} E_{\bm\ell,r} z^\frac r2 + e^\frac{\pi^2N}{12\varepsilon} O\rb{\varepsilon^{N\rb{\lambda+\frac 12}+3R\rb{\lambda+\frac 23}}},\\
		\sum_{\bm u\in\Tc_{\a,\b;N,\bm\ell}\cap\Uc_{\e,\l}} G_j(\bm u) &=: e^\frac{\pi^2N}{12z}\sum_{r=-N}^{R-1} V_{j,r}z^\frac r2 + O\left(\e^{\left(\l+\frac12\right)(3j+2N+R-1)+\frac{N+R}{2}-1}e^\frac{\pi^2N}{12\e}\right),\\
		\sum_{u_\a\in\mf u_\a\left(\bm{u_{[1]}}\right)} G_j\left(\bm{u_{[1]}},u_\a\right) &=: \frac{1}{N\sqrt z}\int_{u_\b}^{u_\b+\sqrt z\infty} G_j\left(\bm{u_{[1]}},u_\a\right) du_\a + \sum_{r=0}^{R-1} W_r G_j^{(r)}\left(\bm{u_{[1]}},u_\b\right)z^\frac r2\\
		&\hspace{5.5cm}+ O\left(\e^{\left(\l+\frac12\right)(3j+R+1)+\frac{R-1}{2}}e^\frac{\pi^2N}{12\e}\right).
	\end{align*}
\end{itemize}

\section{Preliminaries}\label{section:prelim}

We first recall several special functions. For $s\in\C$, the {\it polylogarithm} is given by
\ba
	\Li_s(w) &:= \sum\limits_{n\geq1} \frac{w^n}{n^s}, & \vb{w} < 1.
\ea
For $w\in(0,1)$, define the {\it Rogers dilogarithm function} (shifted by a constant so that $L(1)=0$)
\[
	L(w) := \Li_2(w) + \frac12\log(w)\log(1-w) - \frac{\pi^2}{6}.
\]
It is well-known (see \cite[p.6]{Zagier2007}) that
\begin{equation}\label{eq:Rogerdilog}
	L\left(\frac12\right) = -\frac{\pi^2}{12}.
\end{equation}
Let $B_r(x)$ denote the \textit{$r$-th Bernoulli polynomial} defined via the generating function
\ba
	\frac{t e^{xt}}{e^t-1} =: \sum\limits_{n=0}^\infty B_n(x) \frac{t^n}{n!}.
\ea
The \textit{$r$-th Bernoulli number} is given by $B_r := B_r(0)$.

We use $\ll,\gg$, and $\asymp$ to compare the size of quantities, without any assumption on their signs or arguments unless stated otherwise; e.g., for $y\in\R$, $y\ll1$ means $-C\le y\le C$ for some $C>0$.

We first give a basic estimate for $(q;q)_\infty^{-1}$; the proof of the following lemma is straightforward.

\begin{lem}\label{lem:lqqi}
	Let $\e>0$ and $z:=\e(1+iy)$. Suppose $y\ll\e^{-\frac12+\d}$ for some $\d>0$. Then we have\footnote{Throughout, we take the principal branch for the logarithm.}
	\[
		-\Log((q,q)_\infty) = \frac{\pi^2}{6z} + \frac12\Log\left(\frac{z}{2\pi}\right) - \frac{z}{24} + \Ec,
	\]
	as $z\to0$, where the error term $\Ec$ satisfies $\Ec\ll z^L$ for all $L\in\N$.
\end{lem}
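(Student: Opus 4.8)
The plan is to start from the classical modular transformation for the Dedekind eta function, or equivalently from the well-known asymptotic expansion of $\log((q;q)_\infty)$ as $q \to 1$ along the positive real axis, and then control the error when $z$ is allowed to have a (controlled) imaginary part. First I would recall the exact identity, valid for $\re(z) > 0$,
\[
	-\Log\left(\prod_{n\ge1}(1-e^{-nz})\right) = \frac{\pi^2}{6z} + \frac12\Log\left(\frac{z}{2\pi}\right) - \frac{z}{24} + \sum_{n\ge1} \frac{?}{?},
\]
where the remaining sum is the tail coming from the other cusp; more precisely, applying the modular transformation $z \mapsto 4\pi^2/z$ to $\eta$, one gets that the "error'' is essentially $\sum_{m\ge 1} \log(1 - e^{-4\pi^2 m/z})$ (up to bookkeeping of the $\frac12\Log$ and linear terms, which are exactly the main terms displayed). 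So the heart of the matter is to show that this tail is $\ll z^L$ for every $L\in\N$ under the hypothesis $y \ll \e^{-1/2+\delta}$.

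The key estimate is on $\re(1/z)$. Writing $z = \e(1+iy)$, we have $1/z = (1 - iy)/(\e(1+y^2))$, so $\re(1/z) = 1/(\e(1+y^2))$. Under the hypothesis $|y| \le C\e^{-1/2+\delta}$ we get $1 + y^2 \ll \e^{-1+2\delta}$, hence
\[
	\re\!\left(\frac1z\right) \gg \frac{1}{\e \cdot \e^{-1+2\delta}} = \e^{-2\delta}.
\]
Therefore each term $e^{-4\pi^2 m/z}$ in the tail has absolute value $e^{-4\pi^2 m \re(1/z)} \le e^{-4\pi^2 m \e^{-2\delta}}$, which for $\e$ small is super-polynomially small (smaller than any power $\e^{L}$, equivalently any power $z^L$ since $|z| \asymp \e$). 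Summing the geometric-type tail over $m$ preserves this bound. I would also need the trivial lower bound $\re(z) = \e > 0$ to justify convergence of the original product and the validity of the transformation formula.

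The main obstacle — though it is more bookkeeping than genuine difficulty — is making the modular transformation precise with the correct branch of the logarithm: one must check that $\Log(z/(2\pi))$ is the right principal-branch expression (this uses $\re(z)>0$, so $z/(2\pi)$ lies in the right half-plane and $\Log$ is well-defined and continuous there) and that no extra $2\pi i$ ambiguity creeps in when one writes $\Log(\eta(\text{transformed}))$. An alternative, perhaps cleaner, route that avoids invoking $\eta$ directly is to use the integral representation / Mellin transform of $\log((q;q)_\infty) = -\sum_{n\ge1}\sum_{k\ge1}\frac{1}{k}e^{-nkz} = -\sum_{k\ge1}\frac{1}{k}\frac{e^{-kz}}{1-e^{-kz}}$, apply a Mellin–Barnes representation, and shift the contour past the poles at $s = 1$ (residue $\frac{\pi^2}{6z}$), $s = 0$ (residue giving $\frac12\Log(z/(2\pi)) + \ldots$, via the $\zeta'(0)$ and $\zeta(0)$ values), and $s = -1$ (residue $-z/24$), picking up the stated main terms; the remaining shifted integral is then bounded by $O(z^L)$ exactly when $|\arg z|$ is bounded away from $\pi/2$ by enough, which is guaranteed since $|\arg z| = |\arctan y| \ll \e^{-1/2+\delta} \to \pi/2$ slowly — here one must check that the contour shift and the decay of $\Gamma(s)\zeta(s)\zeta(s+1) z^{-s}$ along vertical lines survives the growth of $\arg z$, which again reduces to the bound $\re(1/z) \gg \e^{-2\delta}$ controlling the dominant factor. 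Either way, the single quantitative input that does all the work is the lower bound on $\re(1/z)$.
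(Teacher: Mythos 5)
Your proposal is correct: the paper gives no proof of this lemma (it is dismissed as straightforward), and your primary route — the exact modular transformation identity $-\Log((q;q)_\infty)=\frac{\pi^2}{6z}+\frac12\Log\left(\frac{z}{2\pi}\right)-\frac{z}{24}-\Log\left(\left(e^{-4\pi^2/z};e^{-4\pi^2/z}\right)_\infty\right)$ for $\re(z)>0$, combined with the key bound $\re(1/z)=\frac{1}{\e(1+y^2)}\gg\e^{-2\d}$ under $y\ll\e^{-\frac12+\d}$, which makes the tail super-polynomially small — is precisely the standard argument being alluded to, and the branch considerations you flag are handled exactly as you say (holomorphy on the right half-plane plus agreement at real $z$). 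One cosmetic point: for $y$ as large as $\e^{-\frac12+\d}$ one has $\e\le|z|\ll\e^{\frac12+\d}$ rather than $|z|\asymp\e$, but this only helps, since an error that is $O(\e^L)$ for every $L$ is a fortiori $O(|z|^L)$ because $|z|\ge\e$.
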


The following lemma is a special case of Lemma 2.1 of \cite{GZ2021}.

\begin{lem}\label{lem:GZ}
	Let $z,w\in\C$ with $\re(z)>0$, $|w|<1$, and $\nu\in\C$ such that $\nu z=o(1)$. Then\footnote{In \cite[Lemma 2.1]{GZ2021}, there is a cyclic quantum dilogarithm term, which vanishes if $q$ approaches $1$.}
	\ba
		\Log\rb{\rb{we^{-\nu z} q;q}_\infty} = -\Li_2(w)\frac{1}{z} - \rb{\nu+\frac12} \Log(1-w) - \frac{\nu^2z}{2} \frac{w}{1-w} + \psi_w (\nu,z).
	\ea
	Here $\psi_w(\nu,z)$, for $R\in\N$, has an asymptotic expansion as $z\to0$ with $\re(z)>0$
	\begin{equation}\label{eq:psi_asymp}
		\psi_w(\nu,z) = -\sum_{r=2}^{R-1} \left(B_r(-\nu)-\d_{r,2}\nu^2\right)\Li_{2-r}(w)\frac{z^{r-1}}{r!} + O\left(z^{R-1}\right),
	\end{equation}
	with $\delta_{j,k}$ the Kronecker delta symbol. In particular, for every $n\in\N_0$ the coefficient of $\nu^n$ is $O( z^{\frac{2n}{3}})$. 
\end{lem}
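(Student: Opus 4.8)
The plan is to give a direct proof of \Cref{lem:GZ} from the product expansion of the $q$-Pochhammer symbol; since $q=e^{-z}\to1$ as $z\to0$, the cyclic quantum dilogarithm term present in the general statement \cite[Lemma~2.1]{GZ2021} does not appear here, and a short self-contained argument suffices.

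First I would write $\rb{we^{-\nu z}q;q}_\infty=\prod_{n\ge1}\rb{1-we^{-(n+\nu)z}}$. Because $|w|<1$, $\re(z)>0$, and $\nu z=o(1)$, we have $|we^{-(n+\nu)z}|\le|w|\,e^{-\re(z)+|\re(\nu z)|}<1$ for all $n\ge1$ once $z$ is small, so the product converges absolutely; taking logarithms termwise, expanding $\Log(1-x)=-\sum_{m\ge1}x^m/m$, interchanging the (absolutely convergent) double sum, and summing the geometric series in $n$ yields
\[
	\Log\rb{\rb{we^{-\nu z}q;q}_\infty}=-\sum_{m\ge1}\frac{w^m}{m}\cdot\frac{e^{-m\nu z}}{e^{mz}-1}.
\]
Next I would expand the inner function via the Bernoulli generating function $\frac{te^{xt}}{e^t-1}=\sum_{r\ge0}B_r(x)\frac{t^r}{r!}$ with $t=mz$ and $x=-\nu$, truncated at order $R$ with an explicit Taylor remainder (the terms with $m\gtrsim|z|^{-1}$ are removed first, being exponentially small because $|w|<1$). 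Summing over $m$ and recognising $\sum_{m\ge1}w^mm^{r-2}=\Li_{2-r}(w)$ turns the main part into $-\sum_{r=0}^{R-1}\frac{B_r(-\nu)}{r!}\Li_{2-r}(w)z^{r-1}$. The term $r=0$ is $-\Li_2(w)/z$; the term $r=1$ is $-(\nu+\tfrac12)\Log(1-w)$ since $B_1(-\nu)=-\nu-\tfrac12$ and $\Li_1(w)=-\Log(1-w)$; the $\nu^2$-part of the term $r=2$ is $-\tfrac{\nu^2z}{2}\cdot\tfrac{w}{1-w}$ since $\Li_0(w)=\tfrac{w}{1-w}$; and packaging the remaining terms as $\psi_w(\nu,z)$ reproduces exactly \eqref{eq:psi_asymp}. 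For the last assertion, note that $\nu^n$ occurs in $B_r(-\nu)$ only for $r\ge n$, so the coefficient of $\nu^n$ in $\psi_w$ is a sum of terms of size $z^{r-1}$ with $r\ge\max(n,2)$ and with $r\ge3$ when $n=2$ (because of the $\delta_{r,2}\nu^2$ subtraction); since $r-1\ge\tfrac{2n}{3}$ in each case ($2\ge\tfrac43$ for $n=2$; $1\ge\tfrac{2n}{3}$ for $n\le1$; $n-1\ge\tfrac{2n}{3}$ for $n\ge3$), the coefficient of $\nu^n$ is $O(z^{2n/3})$.

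I expect the one genuinely delicate point to be the error analysis: the remainder after truncating the Bernoulli expansion must be controlled uniformly enough in $\nu$ to be useful in the later applications, where $\nu$ grows like $|z|^{-1/2}$, so that only the hypothesis $\nu z=o(1)$ (not $\nu=O(1)$) is available. The decay of $|w|^m$ is what confines the $m$-sum to $m=O(|z|^{-1})$, where $|mz|$ stays bounded and the Taylor remainder of $\frac{te^{-\nu t}}{e^t-1}$ is well behaved; keeping track of the $\nu$-dependence there, together with the bound $O(z^{2n/3})$ on the $\nu^n$-coefficient, is precisely what is imported from \cite[Lemma~2.1]{GZ2021}. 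Everything else reduces to bookkeeping with Bernoulli polynomials and polylogarithms.
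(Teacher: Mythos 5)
Your proposal is correct, and it is in fact \emph{more} self-contained than what the paper does: the paper offers no proof of this lemma at all, simply declaring it a special case of \cite[Lemma 2.1]{GZ2021} and adding the remark that a line-by-line check of that proof extends it to complex $z$. Your derivation --- termwise logarithm of the product, interchange of the absolutely convergent double sum to get $-\sum_{m\ge1}\frac{w^m}{m}\frac{e^{-m\nu z}}{e^{mz}-1}$, expansion of $\frac{te^{-\nu t}}{e^t-1}$ by the Bernoulli generating function with $t=mz$, and resummation over $m$ into $\Li_{2-r}(w)$ --- is precisely the standard route to the cited result, and your identifications of the $r=0,1$ terms and of the $\nu^2$-part of the $r=2$ term (via $B_1(-\nu)=-\nu-\tfrac12$, $\Li_1(w)=-\Log(1-w)$, $\Li_0(w)=\tfrac{w}{1-w}$), as well as the degree count giving the $O(z^{2n/3})$ bound on the $\nu^n$-coefficient, are all correct. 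The one substantive caveat is the one you yourself flag: the truncation remainder must be controlled uniformly in $\nu$ with only $\nu z=o(1)$ available (in the applications $\nu$ grows like a negative power of $\varepsilon$), and you state that this estimate is ``imported'' from \cite{GZ2021} rather than carried out; a fully independent proof would need to make that remainder bound explicit (restricting to $m\ll|z|^{-1}$ using the exponential decay of $|w|^m e^{m|\nu z|}$ and bounding the Taylor remainder of $\frac{te^{-\nu t}}{e^t-1}$ with its $\nu$-dependence tracked). Since the paper itself imports the entire lemma, this deferral puts you on at least equal footing with the paper's treatment.
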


\begin{rmk}
	\cite[Lemma 2.1]{GZ2021} is only stated for real $z$. However, a line-by-line check of the proof therein shows that the statement still holds for complex $z$.
\end{rmk}

We also make use of the following version of the Euler--Maclaurin summation formula. The classical version of Euler--Maclaurin summation compares the sum $\sum_{n=0}^mF(n)$ to the integral $\int_0^mF(x)dx$; this version makes use of the same expressions but with a change of variables $F(x)=f(xz+a)$. A related ``shifted'' version of Euler--Maclaurin, can also be found in Proposition 3 of \cite{ZagierMellin}.

\begin{prp}\label{prp:EMF}
	Let $a,z\in\C$. Let $f$ be a holomorphic function that is defined in $L(a,z):=\{tz+a:t\in\R_0^+\}$. Then we have, for all $m,R\in\N$,
	\begin{multline*}
		\sum_{n=0}^m f(nz+a) = \frac1z\int_a^{a+mz} f(x) dx + \frac{f(mz+a)+f(a)}{2}\\
		+ \sum_{r=1}^R \frac{B_{2r}z^{2r-1}}{(2r)!}\left(f^{(2r-1)}(mz+a)-f^{(2r-1)}(a)\right) + O(1)z^{2R}\int_a^{a+mz} \vb{f^{(2R+1)}(x)} dx.
	\end{multline*}
	Furthermore, if $f$ has rapid decay on $L(a,z)$, then we have
	\begin{multline*}
		\sum_{n\ge0} f(nz+a) = \frac1z\int_a^{a+z\infty} f(x) dx + \frac{f(a)}{2}\\
		- \sum_{r=1}^R \frac{B_{2r}z^{2r-1}}{(2r)!}f^{(2r-1)}(a) + O(1)z^{2R}\int_a^{a+z\infty} \vb{f^{(2R+1)}(x)} dx.
	\end{multline*}
\end{prp}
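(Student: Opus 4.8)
The plan is to reduce both statements to the classical Euler--Maclaurin summation formula via the substitution $t\mapsto tz+a$. Since $f$ is holomorphic on an open neighborhood of the ray $L(a,z)$, the function $F\colon\R_0^+\to\C$ defined by $F(t):=f(tz+a)$ is real-analytic on $[0,\infty)$, and the chain rule gives $F^{(k)}(t)=z^k f^{(k)}(tz+a)$ for all $k\in\N_0$. First I would apply the classical Euler--Maclaurin formula with remainder to $F$ on $[0,m]$, in the form
\[
	\sum_{n=0}^m F(n)=\int_0^m F(t)\,dt+\frac{F(m)+F(0)}{2}+\sum_{r=1}^R\frac{B_{2r}}{(2r)!}\bigl(F^{(2r-1)}(m)-F^{(2r-1)}(0)\bigr)+R_R,
\]
where $R_R=-\frac{1}{(2R+1)!}\int_0^m\widetilde B_{2R+1}(t)\,F^{(2R+1)}(t)\,dt$ and $\widetilde B_{2R+1}$ is the periodic Bernoulli function, which is bounded on $\R$.

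Next I would translate every term back in terms of $f$. The left-hand side is $\sum_{n=0}^m f(nz+a)$. The substitution $x=tz+a$ turns the real integral $\int_0^m F(t)\,dt=\int_0^m f(tz+a)\,dt$ into the contour integral $\frac1z\int_a^{a+mz}f(x)\,dx$ along the segment from $a$ to $a+mz$ (this is merely the definition of that contour integral; no appeal to Cauchy's theorem is needed). The boundary term is $\tfrac12(f(mz+a)+f(a))$, and substituting $F^{(2r-1)}(t)=z^{2r-1}f^{(2r-1)}(tz+a)$ turns the Bernoulli sum into $\sum_{r=1}^R\frac{B_{2r}z^{2r-1}}{(2r)!}(f^{(2r-1)}(mz+a)-f^{(2r-1)}(a))$. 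For the remainder, $|F^{(2R+1)}(t)|=|z|^{2R+1}|f^{(2R+1)}(tz+a)|$ together with the boundedness of $\widetilde B_{2R+1}$ and the same change of variables gives $|R_R|\ll |z|^{2R}\int_a^{a+mz}|f^{(2R+1)}(x)|\,dx$, which is the claimed error term in the paper's notation. This establishes the first formula.

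For the second formula I would let $m\to\infty$. Holomorphy of $f$ on a tubular neighborhood of $L(a,z)$ together with the rapid decay hypothesis implies, by Cauchy's integral formula for derivatives over small circles centered on the ray, that each $f^{(k)}$ also has rapid decay along $L(a,z)$; hence $f^{(2r-1)}(mz+a)\to0$, the integral $\int_a^{a+mz}|f^{(2R+1)}(x)|\,dx$ converges to $\int_a^{a+z\infty}|f^{(2R+1)}(x)|\,dx$, and $\frac1z\int_a^{a+mz}f(x)\,dx\to\frac1z\int_a^{a+z\infty}f(x)\,dx$. Passing to the limit in the first formula, the boundary term tends to $\tfrac12 f(a)$ and the Bernoulli sum to $-\sum_{r=1}^R\frac{B_{2r}z^{2r-1}}{(2r)!}f^{(2r-1)}(a)$, giving the second assertion. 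The only real subtlety, and the place a careful write-up spends its effort, is the contour bookkeeping: one must check that the substitution $x=tz+a$ legitimately turns the real integrals produced by classical Euler--Maclaurin into segment (resp.\ ray) integrals from $a$ to $a+mz$ (resp.\ $a+z\infty$), and, for the infinite sum, that rapid decay of $f$ propagates to all its derivatives so that the $m\to\infty$ limits exist and may be taken term by term; everything else is the textbook formula.
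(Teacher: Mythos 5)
Your proposal is correct and is essentially the paper's own route: the paper states this proposition without a detailed proof, describing it exactly as the classical Euler--Maclaurin formula applied to $F(t)=f(tz+a)$ (with a pointer to Zagier's shifted version), which is precisely your reduction, change of variables, and $m\to\infty$ limit. The only caveat---your use of Cauchy's integral formula to propagate rapid decay to the derivatives tacitly reads the hypothesis as decay on a neighborhood of the ray rather than on the ray alone---is an imprecision already present in the paper's statement and is harmless for the Gaussian-type functions to which the proposition is applied.
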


\section{Asymptotic formulas for summands}\label{section:afs}

\subsection{The setup}

We study the asymptotics as $z\to0$, in the right half-plane, of the function
\begin{equation}\label{eq:gfsd} 
	\frac{e^{-H(\bm n)z}}{\prod_{j=1}^N (q;q)_{n_j}}.
\end{equation}
For $z$ real, asymptotics for \eqref{eq:gfsd} were obtained in \cite{GZ2021,VZ2011}. To avoid confusion, we write $\e>0$ for a real variable and denote by $z=\e(1+iy)$ a complex variable with $\re(z)>0$. For $-\frac 23<\l<-\frac12$, we define, with $|\bm\mu|$ the (Euclidean) norm of the vector $\bm\mu$,
\begin{equation*}
	\Nc_{\e,\l} := \left\{\bm n\in\N_0^N : \bm n=\frac{\log(2)}{\e}\bm1+\bm\mu,\ |\bm\mu|\le\e^\l\right\}.
\end{equation*}

\begin{remark*}
	There is a general philosophy behind considering this expansion. As in \cite{GZ2021} (see page 3), and other references studying Nahm sums, natural vectors to ``expand'' around are solutions to the so-called Nahm equation (essentially, this corresponds to the saddle point). In our situation, $H(\bm n)$ can be decomposed into a quadratic form piece and a linear piece, and the quadratic form piece is simply the form $\frac12|{\bm n}|^2$. In this case, the Nahm equation (see (6) in \cite{GZ2021}) becomes $1-z_j=z_j$, ($1\le j\le N$) which has unique solution in $(0,1)^N$ equal to $\frac12\bm1$. As explained in Proposition 2.1 of \cite{GZ2021}, one should center the expansion around $n_j\approx\frac1\e\log(\frac{1}{z_j})$, which in our case becomes $n_j\approx\frac{\log(2)}{\e}$ for all $j$. Thus, $\frac{\log(2)}{\e}\bm1$ is the saddle point we need to expand around, which, philosophically, explains the definition of $\Nc_{\e,\l}$. This also becomes apparent in the forthcoming proofs.
\end{remark*}

In this section, we prove asymptotic formulas for \eqref{eq:gfsd}, in the case $\bm n\in\Nc_{\e,\l}$.

\subsection{Narrow range estimates}

We prove asymptotic expansions for \eqref{eq:gfsd} if $\bm n\in\Nc_{\e,\l}$.

\begin{prp}\label{prp:senr} 
	Let $\bm n\in\Nc_{\e,\l}$ and $\bm u\in\C^N$ be such that, for $1\le j\le N$,
	\begin{equation}\label{eq:ujd} 
		n_j = \frac{\log (2)}{z} + \frac{u_j}{\sqrt{z}}.
	\end{equation}
	Suppose that $y \ll\varepsilon^\delta$ for some $\delta>0$. Then we have
	\begin{equation}\label{eq:senr} 
		\frac{e^{-H(\bm n)z}}{\prod_{j=1}^N (q;q)_{n_j}} = \rb{\frac{z}{\pi}}^\frac N2 \frac{e^{\frac{\pi^2 N}{12z}-\frac{Nz}{24}}}{2^{K+\frac12}} e^{-\bm u^T\bm u -\bm b^T\bm u\sqrt{z}} \prod\limits_{j=1}^N \exp\rb{\xi\rb{\tfrac{u_j}{\sqrt{z}},z}},
	\end{equation}
	where, for $R\in\N$, $\xi(\nu,z)$ has an asymptotic expansion as $z\to0$
	\begin{equation}\label{eq:xsenr} 
		\xi(\nu,z) = -\sum_{r=2}^{R-1} \left(B_r(-\nu)-\d_{r,2}\nu^2\right)\Li_{2-r}\left(\frac12\right)\frac{z^{r-1}}{r!} + O\left(z^{R-1}\right).
	\end{equation}
\end{prp}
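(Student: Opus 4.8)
The plan is to rewrite the summand in~\eqref{eq:gfsd} by substituting the change of variables $n_j = \frac{\log(2)}{z} + \frac{u_j}{\sqrt z}$ and then apply Lemma~\ref{lem:GZ} to each of the $N$ Pochhammer factors. First I would write $(q;q)_{n_j} = \frac{(q;q)_\infty}{(q^{n_j+1};q)_\infty}$, so that
\[
	\prod_{j=1}^N (q;q)_{n_j} = \frac{(q;q)_\infty^N}{\prod_{j=1}^N (q^{n_j+1};q)_\infty}.
\]
For each $j$, the tail factor is $(q^{n_j+1};q)_\infty = \big(w_j e^{-\nu_j z} q; q\big)_\infty$ with $w_j = \tfrac12$ and $\nu_j = n_j - \frac{\log(2)}{z} = \frac{u_j}{\sqrt z}$ (using $q = e^{-z}$ so that $q^{n_j} = e^{-n_j z} = \tfrac12 e^{-u_j\sqrt z}$). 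The hypothesis $\bm n\in\Nc_{\e,\l}$ with $\l<-\tfrac12$ guarantees $|u_j|\ll \e^{\l+\frac12}\to 0$, hence $\nu_j z = u_j\sqrt z \to 0$, so Lemma~\ref{lem:GZ} applies with $w=\tfrac12$ and the error control on the coefficients of powers of $\nu_j$ is exactly what will later make the series in~\eqref{eq:xsenr} converge. Simultaneously, $-\Log((q;q)_\infty)$ is handled by Lemma~\ref{lem:lqqi}, whose hypothesis $y\ll\e^{-\frac12+\d'}$ is implied by the assumption $y\ll\e^\d$; this produces the factor $(z/2\pi)^{-N/2}$, the $e^{\pi^2 N/(6z)}$, and $e^{Nz/24}$ pieces (raised to the $N$-th power, with sign flips from the reciprocal).

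Next I would assemble the pieces. From Lemma~\ref{lem:GZ}, $\Log\big((w e^{-\nu z}q;q)_\infty\big) = -\Li_2(w)\frac1z - (\nu+\tfrac12)\Log(1-w) - \frac{\nu^2 z}{2}\frac{w}{1-w} + \psi_w(\nu,z)$; with $w=\tfrac12$ this gives $-\Li_2(\tfrac12)\frac1z - (\nu+\tfrac12)\log(\tfrac12) - \frac{\nu^2 z}{2} + \psi_{1/2}(\nu,z)$. Using the classical evaluation $\Li_2(\tfrac12) = \frac{\pi^2}{12} - \frac{\log(2)^2}{2}$ (equivalently~\eqref{eq:Rogerdilog} with $L(\tfrac12)=-\tfrac{\pi^2}{12}$), the $\frac1z$-terms from all sources combine: the $N$ copies of $-\Li_2(\tfrac12)\frac1z$ from the tails, the $-\frac{\pi^2 N}{6z}$ from $(q;q)_\infty^{-N}$, and the $\frac1z$-term hidden in $e^{-H(\bm n)z}$ after expanding $H(\bm n) = \frac12\bm n^T\bm n + \bm b^T\bm n$ in powers of $z^{-1}$ — the leading $\frac12\big(\frac{N\log(2)^2}{z^2}\big)z = \frac{N\log(2)^2}{2z}$ piece must cancel against $-N\cdot\big(-\frac{\log(2)^2}{2}\big)\frac1z$ from the dilogarithm identity, leaving precisely $\frac{\pi^2 N}{12 z}$. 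The $z^0$-terms produce $-\bm u^T\bm u$ (this is the crucial cancellation: the cross term from expanding $\frac12 n_j^2$ with $n_j = \frac{\log 2}{z} + \frac{u_j}{\sqrt z}$ gives $\frac{\log(2) u_j}{z^{3/2}}\cdot z$-type contributions and a $\frac{u_j^2}{2z}\cdot z = \frac{u_j^2}{2}$ term, while $-(\nu_j+\tfrac12)\log\tfrac12 = (\nu_j+\tfrac12)\log 2$ contributes $+\frac{u_j\log 2}{\sqrt z}\cdot$... — I need to track that the surviving quadratic-in-$u$ part is $-\sum_j u_j^2 = -\bm u^T\bm u$, combining $\frac{u_j^2}{2}$ from $H$, $-\frac{\nu_j^2 z}{2} = -\frac{u_j^2}{2}$ from Lemma~\ref{lem:GZ}, and $-u_j^2$ from... ) Let me just say: careful bookkeeping of all $z^0$ contributions yields $e^{-\bm u^T\bm u}$ together with the constant $2^{-K-1/2}$ coming from $N$ factors of $-\frac12\log\tfrac12 = \frac{\log 2}{2}$ plus the $\bm b^T\bm n$ linear term of $H$ evaluated at the lattice-point shift, where~\eqref{eq:sbj} ($\sum_j b_j = K + \tfrac12$) converts $\sum_j b_j \cdot \frac{\log 2}{z}\cdot z = (K+\tfrac12)\log 2$ into $2^{-(K+1/2)}$. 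The $z^{1/2}$-terms collect into $-\bm b^T\bm u\sqrt z$, and the $z^{1}$-term gives $-\frac{Nz}{24}$ (from $(q;q)_\infty^{-N}$). Finally, defining $\xi(\nu,z) := \psi_{1/2}(\nu,z) + (\text{the }\nu\text{-linear and }\nu\text{-quadratic remnants not yet absorbed})$, the asymptotic expansion~\eqref{eq:xsenr} is read off directly from~\eqref{eq:psi_asymp} with $w=\tfrac12$.

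The main obstacle I anticipate is the bookkeeping of the $z^0$-coefficient, i.e.\ verifying that all the $O(1)$ contributions (from $e^{-H(\bm n)z}$ after the substitution, from the $-(\nu+\tfrac12)\log(1-w)$ terms, from $\frac12\Log(z/2\pi)$ in Lemma~\ref{lem:lqqi}, and from the reorganization of half-integer powers of $z$ hidden inside $H(\bm n) = \frac12\bm n^T\bm n + \bm b^T\bm n$) combine exactly to $\big(\frac{z}{\pi}\big)^{N/2} 2^{-(K+1/2)} e^{-\bm u^T\bm u} e^{-\bm b^T\bm u\sqrt z}$ and nothing else. In particular I must be careful that $H(\bm n)z$ is a genuine Laurent polynomial in $\sqrt z$ of the form $\frac{N\log(2)^2}{2z} + \frac{\text{(linear in }\bm u)}{\sqrt z}\cdot\text{const} + (\text{quadratic in }\bm u) + \bm b^T\bm u\sqrt z + (\text{const})\cdot z$ and that each homogeneity degree is matched against the correct source; the quantity $\e^\l$ with $\l > -\tfrac23$ is exactly the threshold ensuring the $O(z^{R-1})$ error in~\eqref{eq:xsenr}, with $\nu = u/\sqrt z$ of size $\e^{\l+1/2}/\e^{1/2} = \e^{\l}$, stays genuinely small after the $\nu^n$-coefficient bound $O(z^{2n/3})$ is applied — this is why the hypothesis $\l > -\tfrac23$ (rather than just $\l > -1$) is needed, and I would flag that compatibility explicitly. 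Everything else is a routine Taylor expansion and collection of like powers of $z^{1/2}$.
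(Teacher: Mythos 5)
Your route is the same as the paper's: write $(q;q)_{n_j}=(q;q)_\infty/(q^{n_j+1};q)_\infty$, apply \Cref{lem:GZ} with $w=\frac12$ and \Cref{lem:lqqi}, combine with the expansion of $-H(\bm n)z$ using \eqref{eq:sbj}, collapse the $z^{-1}$-terms via $L(\frac12)=-\frac{\pi^2}{12}$, and set $\xi:=\psi_{\frac12}+\Ec$. However, your verification of the hypothesis of \Cref{lem:GZ} is flawed. You claim $|u_j|\ll\e^{\l+\frac12}\to0$; first, $\e^{\l+\frac12}\to\infty$ since $\l+\frac12<0$, and more importantly the bound itself is false in the stated generality: $\nu_j=n_j-\frac{\log 2}{z}$ is \emph{not} $\mu_j$, because $z=\e(1+iy)$ is complex. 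One has $\nu_j=\rb{1-\frac{1}{1+iy}}\frac{\log 2}{\e}+\mu_j$, and under the hypothesis $y\ll\e^\d$ with $\d>0$ possibly tiny, the first term can be as large as $\e^{\d-1}$, so $u_j=\nu_j\sqrt z$ can have size $\e^{\d-\frac12}\to\infty$. What is actually needed (and all that is needed) is $\nu_jz=o(1)$, which does hold: the first term times $z$ is $\ll\e^{\d}$ and $\mu_jz\ll\e^{1+\l}$; this is precisely how the paper argues via \eqref{eq:nunr}. Relatedly, your remark that $\l>-\frac23$ is what makes \eqref{eq:xsenr} work is misplaced: in this proposition \eqref{eq:xsenr} is read off from \eqref{eq:psi_asymp} verbatim; the constraint $\l>-\frac23$ only matters later (in \Cref{prp:sanr}) when the exponential is expanded.

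The $z^0$ bookkeeping, which you yourself flag as the main obstacle, is left unfinished, and the fragments you give contain slips that would need repair: the factor from $(q;q)_\infty^{-N}$ is $\rb{\frac{z}{2\pi}}^{\frac N2}$ (not its reciprocal) together with $e^{\frac{\pi^2N}{6z}}$ and $e^{-\frac{Nz}{24}}$; the $N$ copies of $+\frac{\log 2}{2}$ from $\log(2)\rb{\nu_j+\frac12}$ upgrade $\rb{\frac{z}{2\pi}}^{\frac N2}$ to $\rb{\frac z\pi}^{\frac N2}$ rather than feeding into $2^{-K-\frac12}$; the constant $2^{-K-\frac12}$ comes solely from $-z\,\bm b^T\rb{\frac{\log 2}{z}\bm 1}=-\rb{K+\frac12}\log 2$ via \eqref{eq:sbj}; and the quadratic part $-\bm u^T\bm u$ has exactly two sources, $-\frac12 u_j^2$ from $-H(\bm n)z$ and $-\frac{\nu_j^2z}{2}=-\frac12u_j^2$ from \Cref{lem:GZ} (your trailing ``and $-u_j^2$ from\dots'' would double count), while the $z^{-1/2}$-terms $\pm\frac{\log 2}{\sqrt z}\sum_ju_j$ cancel. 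With the GZ-hypothesis check repaired as above and this bookkeeping actually carried out, your plan does reproduce \eqref{eq:senr}, in essentially the same way as the paper.
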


\begin{proof}
	We write
	\begin{equation}\label{eq:qqnq} 
		\frac{1}{(q;q)_{n_j}} = \frac{\left(q^{n_j+1};q\right)_\infty}{(q;q)_\infty}.
	\end{equation}
	By \eqref{eq:ujd} we have $q^{n_j}=\frac12e^{-u_j\sqrt{z}}$. Let $\nu_j:=\frac{u_j}{\sqrt{z}}$. Since $\bm n\in\Nc_{\e,\l}$, we have, for some $|\mu_j|\le\e^\l$,
	\begin{equation}\label{eq:nunr} 
		\nu_j = n_j - \frac{\log (2)}z = \rb{\frac{\log (2)}{\varepsilon} + \mu_j} - \frac{\log (2)}{\varepsilon(1+iy)} = \rb{1-\frac1{1+iy}} \frac{\log (2)}{\varepsilon}+ \mu_j.
	\end{equation}
	If $y\ll\e^\d$, then the first term in \eqref{eq:nunr} has size $\e^{\d-1}$. Since $|\mu_j|\le\e^\l$, we conclude that $\nu_jz=o(1)$. Thus we may apply \Cref{lem:GZ} with $w=\frac12$, and deduce that
	\begin{equation} \label{eq:slqqi} 
		\Log\rb{\rb{q^{n_j+1};q}_\infty} = -\frac1z\Li_2\rb{\frac 12} + \log (2)\rb{\nu_j+\frac12} - \frac{\nu_j^2z}{2} + \psi_{\frac 12}(\nu_j,z).
	\end{equation}
	By \eqref{eq:qqnq}, \eqref{eq:slqqi}, and \Cref{lem:lqqi} we have, with $\mc E$ satisfying $\mc E \ll z^L$ for all $L\in\N$, 
	\begin{equation*}
		-\Log\rb{(q;q)_{n_j}} = -\Li_2\rb{\frac 12}\frac1z + \log (2)\rb{\nu_j+\frac12} - \frac{\nu_j^2z}{2} + \psi_{\frac 12}(\nu_j,z) + \frac{\pi^2}{6z} - \frac 12 \Log\rb{\frac{2\pi}{z}} - \frac{z}{24} + { \mc E}.
	\end{equation*}
	Now let $\xi(\nu_j,z):=\psi_\frac12(\nu_j,z)+\Ec$. Since $\Ec\ll z^L$ for all $L\in\N$, it follows that $\xi(\nu_j,z)$ has the same asymptotic expansion as $\psi_\frac12(\nu_j,z)$. So we may write
	\begin{align*}
		-\Log\rb{(q;q)_{n_j}} &= -\frac12\Log\rb{\frac{\pi}{z}} - \frac{z}{24}+\rb{\frac{\pi^2}{6}-\Li_2\rb{\frac 12}} \frac1z + \log (2)\nu_j - \frac{\nu_j^2z}{2} + \xi(\nu_j,z).
	\end{align*}
	Summing over $j$ gives
	\begin{equation}\label{eq:lqqn} 
		\scalebox{0.89}{$\displaystyle -\sum_{j=1}^N \Log\rb{(q;q)_{n_j}} = -\frac N2\Log\rb{\frac{\pi}{z}}-\frac{Nz}{24} + \rb{\frac{\pi^2}{6}-\Li_2\rb{\frac 12}}\frac Nz +\sum_{j=1}^N \rb{\log (2)\nu_j - \frac{\nu_j^2z}{2} + \xi(\nu_j,z)}.$}
	\end{equation}
	Using \eqref{eq:sbj}, we note that
	\begin{equation}\label{eq:Hnz} 
		-H(\bm n)z = -\frac{N\log (2)^2}{2z} - \frac{\log (2)}{\sqrt{z}}\sum_{j=1}^N u_j - \frac12\bm u^T\bm u- \log(2)\rb{K+\frac 12}- \bm b^T\bm u\sqrt{z}.
	\end{equation}
	We combine \eqref{eq:lqqn} and \eqref{eq:Hnz} and get
	\begin{multline*}
		\hspace{-.2cm}\scalebox{.95}{$\displaystyle\Log\left(\frac{e^{-H(\bm n)z}}{\prod_{j=1}^N (q;q)_{n_j}}\right) = -\frac N2\Log\left(\frac\pi z\right) - \frac{Nz}{24} + \left(\frac{\pi^2}{6}-\Li_2\left(\frac12\right)\right)\frac Nz + \sum_{j=1}^N \left(\log(2)\nu_j-\frac{\nu_j^2z}{2}+\xi(\nu_j,z)\right)$}\\
		\scalebox{.95}{$\displaystyle- \frac{N\log(2)^2}{2z} - \frac{\log(2)}{\sqrt z}\sum_{j=1}^N u_j - \frac12\bm u^T\bm u- \log(2)\left(K+\frac12\right) - \bm b^T\bm u\sqrt z.$}
	\end{multline*}
	Using \eqref{eq:Rogerdilog}, we evaluate
	\ba
		- \rb{\Li_2\rb{\frac 12} + \frac {\log(2)^2}2 - \frac{\pi^2}{6}}\frac Nz = - L\rb{\frac 12}\frac Nz = \frac{\pi^2N}{12z}.
	\ea
	Finally, using this and the fact that $\nu_j = \frac{u_j}{\sqrt{z}}$, we may rewrite
	\begin{multline*}
		\Log\left(\frac{e^{-H(\bm n)z}}{\prod_{j=1}^N (q;q)_{n_j}}\right)\\
		= -\frac N2\Log\left(\frac\pi z\right) + \frac{\pi^2N}{12z} - \frac{Nz}{24} - \bm u^T\bm u - \bm b^T\bm u\sqrt z - \log(2)\left(K+\frac12\right) + \sum_{j=1}^N \xi\left(\frac{u_j}{\sqrt z},z\right).
	\end{multline*}
	Finally, we obtain \eqref{eq:senr} by exponentiating.
\end{proof}

Now we use \Cref{prp:senr} to derive an asymptotic expansion for \eqref{eq:gfsd}.

\begin{prp}\label{prp:sanr} 
	Assume the setup as in \Cref{prp:senr}. If $y\ll\e^{\frac13+\d}$ for some $\d>0$, then we have, for $R\in\N$, uniformly in $\bm u$,
	\begin{equation*}
		\frac{e^{-H(\bm n)z}}{\prod_{j=1}^N (q;q)_{n_j}} = \leg z\pi^\frac N2\frac{e^\frac{\pi^2N}{12z}e^{-\bm u^T\bm u}}{2^{K+\frac12}}\sum_{r=0}^{R-1} C_r(\bm u)z^\frac r2 + O\left(\e^{3R\d_1}\right) z^\frac N2e^\frac{\pi^2N}{12z}e^{-\bm u^T\bm u},
	\end{equation*}
	where $\d_1:=\min\{\d,\frac23+\l\}>0$. The $C_r(\bm u)$ are defined as coefficients of the formal exponential
	\begin{equation}\label{eq:Crud} 
		\sum_{r\ge0} C_r(\bm u)z^\frac r2 := \exp(\phi(\bm u,z)),\qquad \phi(\bm u,z) := -\bm b^T\bm u\sqrt{z} - \frac{Nz}{24} + \sum_{j=1}^N \xi\left(\frac{u_j}{\sqrt{z}},z\right),
	\end{equation}
	and {$\xi(\nu,z)$} has the asymptotic expansion given in \eqref{eq:xsenr}. 
\end{prp}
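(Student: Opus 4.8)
The plan is to take the exact-formula identity from \Cref{prp:senr},
\[
	\frac{e^{-H(\bm n)z}}{\prod_{j=1}^N (q;q)_{n_j}} = \leg z\pi^\frac N2 \frac{e^{\frac{\pi^2 N}{12z}-\frac{Nz}{24}}}{2^{K+\frac12}} e^{-\bm u^T\bm u -\bm b^T\bm u\sqrt{z}} \prod_{j=1}^N \exp\rb{\xi\rb{\tfrac{u_j}{\sqrt z},z}},
\]
and expand the transcendental factor $e^{-\frac{Nz}{24}-\bm b^T\bm u\sqrt z}\prod_j\exp(\xi(u_j/\sqrt z,z))=\exp(\phi(\bm u,z))$ as a power series in $z^{1/2}$, with coefficients $C_r(\bm u)$ as in \eqref{eq:Crud}, and then to control the tail of this expansion uniformly in $\bm u$ on the relevant range. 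So first I would record that $\phi(\bm u,z)$ is, by definition, a finite sum of the pieces $-\frac{Nz}{24}$, $-\bm b^T\bm u\sqrt z$, and the $\xi(u_j/\sqrt z,z)$, and that each $\xi(\nu,z)$ has the asymptotic expansion \eqref{eq:xsenr} with the key size bound from \Cref{lem:GZ}: the coefficient of $\nu^n$ in $\xi(\nu,z)$ is $O(z^{2n/3})$. Substituting $\nu=u_j/\sqrt z$ and using $|u_j|\ll\e^{\lambda+1/2}$ (which follows from $\bm n\in\Nc_{\e,\l}$, i.e. $|\bm\mu|\le\e^\lambda$, together with \eqref{eq:nunr} and the hypothesis $y\ll\e^{1/3+\d}$, which forces the $(1-\frac1{1+iy})\frac{\log2}{\e}$ term to have size $\ll\e^{-2/3+\d}$), one sees that the term $B_r(-\nu)\Li_{2-r}(\tfrac12)z^{r-1}/r!$ contributes, monomial by monomial in $\nu$, a quantity of size $O(\e^{3(r-1)\d_1})$ where $\d_1=\min\{\d,\tfrac23+\lambda\}$; indeed the $\nu^n$-coefficient scaled gives $z^{2n/3}(u_j/\sqrt z)^n=z^{n/6}u_j^n=O(\e^{n(\lambda+1/2)+n/6})=O(\e^{n(\lambda+2/3)})$, and for $r\ge2$ there is at least one extra power of $z^{r-1}$ left over. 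The upshot is $\phi(\bm u,z)=O(\e^{3\d_1})$ uniformly, so $\exp(\phi(\bm u,z))$ is bounded and its formal power series in $z^{1/2}$ converges in the appropriate sense.

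Next I would make the expansion precise by truncating: write $\exp(\phi(\bm u,z))=\sum_{r=0}^{R-1}C_r(\bm u)z^{r/2}+(\text{remainder})$. The point is twofold. First, each $C_r(\bm u)$ is a polynomial in $\bm u$ (of degree $\le$ some function of $r$) with coefficients that are bounded as $\e\to0$; this is because $C_r$ is a universal polynomial in the coefficients of $\phi$, and those coefficients, by the paragraph above, are $O(1)$ (in fact small). Second, the remainder after the $z^{R-1}$-order cutoff is $O(\e^{3R\d_1})$ uniformly in $\bm u$: grouping the contributions to $\phi$ by their order in $\e$, every monomial contributing at order $z^{R/2}$ or higher carries a factor $\e^{3\d_1}$ at least $R$ times (one factor per unit of order, by the bookkeeping $z^{r-1}(u_j/\sqrt z)^n\asymp\e^{(r-1)+n(\lambda+2/3)}$ and $\e^{3\d_1}\le\e^{3\d_1}$ dominating each unit-step), and $\exp$ of a small quantity does not destroy this. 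Multiplying the truncated expansion by the prefactor $(z/\pi)^{N/2}e^{\pi^2N/(12z)}e^{-\bm u^T\bm u}/2^{K+1/2}$ converts the $O(\e^{3R\d_1})$ error on $\exp(\phi)$ into the stated error term $O(\e^{3R\d_1})z^{N/2}e^{\pi^2N/(12z)}e^{-\bm u^T\bm u}$, which is exactly the claimed form.

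Finally I would assemble the pieces: by \Cref{prp:senr} the left-hand side equals the prefactor times $\exp(\phi(\bm u,z))$, and substituting the truncated expansion of $\exp(\phi)$ with its controlled remainder gives
\[
	\frac{e^{-H(\bm n)z}}{\prod_{j=1}^N (q;q)_{n_j}} = \leg z\pi^\frac N2\frac{e^\frac{\pi^2N}{12z}e^{-\bm u^T\bm u}}{2^{K+\frac12}}\sum_{r=0}^{R-1} C_r(\bm u)z^\frac r2 + O\rb{\e^{3R\d_1}} z^\frac N2 e^\frac{\pi^2N}{12z} e^{-\bm u^T\bm u},
\]
as required, with the identification \eqref{eq:Crud} of the $C_r(\bm u)$. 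The main obstacle, I expect, is the uniformity in $\bm u$: one must check that the size estimates on $u_j$ hold across all of $\Nc_{\e,\l}$ (and on the wider cone $y\ll\e^{1/3+\d}$, not just $y\ll\e^\d$ as in \Cref{prp:senr}), and that the remainder bound does not secretly depend on $\bm u$ through the polynomial growth of the $C_r$ — this is handled because the Gaussian factor $e^{-\bm u^T\bm u}$ is carried along on both sides and is never used to absorb the $C_r$'s, so the error is stated with $e^{-\bm u^T\bm u}$ explicitly present and the polynomial $C_r(\bm u)$ is simply left as is. The one genuinely delicate point is tracking the exponent $3R\d_1$ with $\d_1=\min\{\d,\tfrac23+\lambda\}$: the $\tfrac23+\lambda$ comes from the size $\e^{\lambda+1/2}$ of $\bm u$ feeding through $z^{n/6}$, while the $\d$ comes from the off-diagonal growth of $\nu_j$ when $y$ is as large as $\e^{1/3+\d}$; one takes the worse of the two, and a careful monomial count confirms the power is exactly $3R\d_1$.
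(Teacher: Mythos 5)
Your overall route coincides with the paper's: start from the exact formula of \Cref{prp:senr}, expand $\exp(\phi(\bm u,z))$ as a series in $z^{\frac12}$ via \eqref{eq:Crud}, truncate, and bound the tail uniformly before multiplying by the Gaussian prefactor. The problem is the central uniform estimate you rely on. By \eqref{eq:nunr}, $\nu_j=\bigl(1-\frac{1}{1+iy}\bigr)\frac{\log(2)}{\e}+\mu_j$, and under the hypothesis $y\ll\e^{\frac13+\d}$ the $y$-dependent term can be as large as $\e^{-\frac23+\d}$, hence contributes up to $\e^{-\frac16+\d}$ to $u_j=\nu_j\sqrt z$. When $\d<\frac23+\l$ (i.e.\ precisely when $\d_1=\d$), this dominates $|\mu_j\sqrt z|\ll\e^{\l+\frac12}$, so your claimed bound $|u_j|\ll\e^{\l+\frac12}$ is false; the correct bound is $|\bm u|\ll\e^{-\frac16+\d_1}$ (equivalently $\nu_j\ll\e^{-\frac23+\d_1}$), which is what the paper uses. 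This is not a cosmetic slip: if $|u_j|\ll\e^{\l+\frac12}$ were true, your monomial count would give an error exponent $3R\bigl(\frac23+\l\bigr)$ with no dependence on $\d$ at all, and there would be nothing for the $\min$ in $\d_1$ to do. Your closing sentence, that ``one takes the worse of the two,'' acknowledges the issue but contradicts the bound actually used in the body, and the count with the correct bound is never carried out, so the stated error $O(\e^{3R\d_1})$ is asserted rather than proved.

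A second ingredient you leave unproved is the degree control on the $C_r$, which is exactly what makes the exponent come out as $3R\d_1$. Writing $\phi(\bm u,z)=\sum_{m\ge1}g_m(\bm u)z^{\frac m2}$, one has $\deg(g_m)\le m+2\le 3m$ (degree $r$ from $B_r(-\nu)$ against the weight $z^{r-1}$ with $\nu=u_j/\sqrt z$), whence $\deg(C_r)\le 3r$; only then does $|\bm u|\ll\e^{-\frac16+\d_1}$ together with $z\asymp\e$ give $C_r(\bm u)z^{\frac r2}\ll\e^{3r\d_1}$ and hence the tail bound $O(\e^{3R\d_1})$ uniformly in $\bm u$. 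Saying the degree is ``some function of $r$'' is not sufficient, since faster degree growth would destroy the stated error term. (A minor point: the cone $y\ll\e^{\frac13+\d}$ is narrower, not wider, than the cone $y\ll\e^{\d}$ of \Cref{prp:senr}, so that proposition applies directly; your parenthetical has this reversed.)
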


\begin{proof}
	Using \eqref{eq:senr} and \eqref{eq:Crud}, we have, as a formal power series,
	\[
		\frac{e^{-H(\bm n)z}}{\prod_{j=1}^N (q;q)_{n_j}} = \left(\frac z\pi\right)^\frac N2\frac{e^\frac{\pi^2N}{12z}}{2^{K+\frac12}}e^{-\bm u^T\bm u}\sum_{r\ge0} C_r(\bm u)z^\frac r2.
	\]
	It remains to show that for $R\in\N$, uniformly in $\bm u$,
	\begin{equation}\label{eq:sanr_e}
		\left(\frac z\pi\right)^\frac N2\frac{e^\frac{\pi^2N}{12z}}{2^{K+\frac12}}e^{-\bm u^T\bm u}\sum_{r\ge R} C_r(\bm u)z^\frac r2 = O\left(\e^{3R\d_1}\right)z^\frac N2e^\frac{\pi^2N}{12z}e^{-\bm u^T\bm u}.
	\end{equation}
	
	As $y\ll\e^{\frac13+\d}$, we have $z\asymp\e$. Since $\bm n\in\Nc_{\e,\l}$, we have $|\bm\mu|\le\e^\l$, and it follows from \eqref{eq:nunr} that $\nu_j\ll\e^{-\frac23+\d_1}$ and $|\bm u|\ll\e^{-\frac16+\d_1}$. Now we consider the asymptotic expansion of $\phi(\bm u,z)$. As $\psi_\frac12(\nu_j,z)$ and $\xi(\nu_j,z)$ have the same asymptotic expansion (see the proof of \Cref{prp:senr}), we use \eqref{eq:psi_asymp} and deduce for $S\ge3$ that
	\begin{multline}\label{eq:sanr_p}
		\phi(\bm u,z) = -{{\bm b}}^T\bm u\sqrt{z} - \frac{Nz}{24}
		-\sum_{j=1}^N \sum_{r=2}^{S-1} \rb{B_r\rb{-\frac{u_j}{\sqrt{z}}}-\delta_{r,2}\frac{u_j^2}{z}} \Li_{2-r}\rb{\frac 12} \frac{z^{r-1}}{r!}\\ + O\rb{\varepsilon^{\frac 13 (S-3) + S\delta_1}}.
	\end{multline}
	Uniformity in $\bm u$ follows as the implied constant in the error term is independent of $\bm u$.
	
	Finally, we prove the proposition by showing that exponentiating \eqref{eq:sanr_p} gives a well-defined asymptotic series. For this rewrite \eqref{eq:sanr_p} as a formal power series in $z^\frac12$
	\[
		\phi(\bm u,z) =: \sum_{m\ge1} g_m(\bm u)z^\frac m2.
	\]
	Then, since $\deg(B_m)=m$, it follows that $\deg(g_m)\le m+2$. For $m\in\N$, we have
	\ba
		g_m(\bm u)z^\frac m2 \ll \varepsilon^{(m+2)\rb{-\frac16+\delta_1}+\frac m2} \ll \varepsilon^{\frac{m-1}{3}+(m+2)\delta_1}.
	\ea
	It follows that for all $M\in\N$, we have $\sum_{m=1}^M g_m(\bm u)z^\frac m2=O(\e^{3\d_1})$ uniformly in $\bm u$. We exponentiate \eqref{eq:sanr_p} and obtain a formal expression
	\[
		\exp(\phi(\bm u,z)) = \exp\left(\sum_{m\ge1} g_m(\bm u)z^\frac m2\right) = \prod_{m\ge1} \sum_{k\ge0} \frac{g_m(\bm u)^kz^\frac{mk}{2}}{k!} =: \sum_{r\ge0} C_r(\bm u)z^\frac r2,
	\]
	with $C_0(\bm u)=1$. We claim that $\deg(C_r)\le3r$ for all $r\in\N$. To see this, we observe that
	\[
		\deg(C_r) \le \max_{\sum_\ell m_\ell=r} \left\{\sum_{(m_1,m_2,\dots)} \deg(g_{m_\ell})\right\}.
	\]
	Here the maximum is taken over sequences $(m_1,m_2,\dots)$ of non-negative integers satisfying $\sum_\ell m_\ell=r$. But since $\deg(g_m)\le m+2\le3m$ for $m\in\N$, we deduce that $\deg(C_r)\le3r$ for all $r\in\N$ as claimed. From $|\bm u|\ll\e^{-\frac16+\d_1}$ it follows that for all $R\in\N$, uniformly in $\bm u$,
	\[
		\exp(\phi(\bm u,z)) = \sum_{r=0}^{R-1} C_r(\bm u)z^\frac r2 + O\left(\e^{3R\d_1}\right).
	\]
	This proves \eqref{eq:sanr_e}, and hence the proposition.
\end{proof}

\subsection{Wider range estimates}

To establish an alternative asymptotic expansion for \eqref{eq:gfsd}, which is valid on the major arc, let $\Ac:=(1+\frac{2^{-(1+iy)}}{1-2^{-(1+iy)}})I_N$, with $I_N$ the $N\times N$ identity matrix, and
\begin{equation*}
	\La(y) := N\left(\frac{\pi^2}{6}-\frac{\log(2)^2(1+iy)^2}{2}-\Li_2\left(2^{-(1+iy)}\right)\right).
\end{equation*}

\begin{prp}\label{prp:sewr}
	Let $\bm n\in\mc N_{\varepsilon,\l}$ and $\bm v\in\C^N$ be such that for $1\le j\le N$, 
	\begin{equation}\label{eq:vjd} 
		n_j = \frac{\log (2)}{\varepsilon} + \frac{v_j}{\sqrt{z}}.
	\end{equation}
	Suppose that $y \ll\varepsilon^{-1-\frac{3\lambda}2+\delta}$ for some $\delta>0$. Then we have
	\begin{multline*}
		\frac{e^{-H(\bm n)z}}{\prod_{j=1}^N (q;q)_{n_j}} = 2^{-\left(K+\frac12\right)(1+iy)}\left(1-2^{-(1+iy)}\right)^{-\frac N2-\frac{1}{\sqrt z}{\sum_{j=1}^N v_j}}\left(\frac{z}{2\pi}\right)^\frac N2\\
		\times \exp\left(\frac{\La(y)}{z}-\frac{Nz}{24}-\frac{\log(2)(1+iy)}{\sqrt z}\sum_{j=1}^N v_j - \frac12\bm v^T\Ac\bm v-\bm b^T\bm v\sqrt z\right)\prod_{j=1}^N \exp\left(\xi_y\left(\frac{v_j}{\sqrt z},z\right)\right),
	\end{multline*}
	where, for $R\in\N$, $\xi_y(\nu,z)$ has an asymptotic expansion, as $z\to0$,
	\begin{equation}\label{eq:xsewr} 
		\xi_y\rb{\nu,z} = - \sum_{r=2}^{R-1} \rb{B_r\rb{-\nu} - \delta_{r,2} \nu^2} \Li_{2-r}\rb{2^{-(1+iy)}} \frac{z^{r-1}}{r!} + O\rb{z^{R-1}}.
	\end{equation}
\end{prp}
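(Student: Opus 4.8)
The plan is to follow the proof of Proposition~\ref{prp:senr} essentially verbatim, the one structural change being that each factor $(q;q)_{n_j}^{-1}$ is now expanded about $n_j\approx\frac{\log(2)}{\varepsilon}$ rather than $n_j\approx\frac{\log(2)}{z}$. First I would write $\frac{1}{(q;q)_{n_j}}=\frac{(q^{n_j+1};q)_\infty}{(q;q)_\infty}$. Since $q=e^{-z}$ and $z=\varepsilon(1+iy)$, relation~\eqref{eq:vjd} yields $q^{n_j}=2^{-(1+iy)}e^{-v_j\sqrt z}$, so that with $w:=2^{-(1+iy)}$ and $\nu_j:=\frac{v_j}{\sqrt z}$ we have $q^{n_j+1}=we^{-\nu_jz}q$. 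Because $\bm n\in\Nc_{\varepsilon,\l}$ we in fact have $\nu_j=\mu_j$ (real), whence $|\nu_j|\le|\bm\mu|\le\varepsilon^\l$, while $\bm v=\sqrt z\,\bm\mu$ is complex. Unlike in Proposition~\ref{prp:senr}, the relevant dilogarithm argument is now the $y$-dependent $w=2^{-(1+iy)}$; since $|w|=\frac12$ and $\re(1-w)\ge\frac12$, the quantities $\Li_2(w)$, $\Log(1-w)$ and $\frac{w}{1-w}$ stay bounded and well-defined.

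Next I would verify that the hypothesis $y\ll\varepsilon^{-1-\frac{3\l}{2}+\delta}$ is precisely what keeps us inside the range of both key lemmas. It gives $|z|\asymp\varepsilon\max(1,|y|)\ll\varepsilon^{-\frac{3\l}{2}+\delta}$, hence $\nu_jz\ll\varepsilon^{-\frac{\l}{2}+\delta}=o(1)$ (using $\l<-\frac12$), so Lemma~\ref{lem:GZ} applies with this $w$ and $\nu=\nu_j$; and, again since $\l<-\frac12$, it gives $y\ll\varepsilon^{-\frac12+\delta'}$ for a suitable $\delta'>0$, so Lemma~\ref{lem:lqqi} applies to $(q;q)_\infty$. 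Adding the two expansions and absorbing the rapidly decaying error $\Ec$ of Lemma~\ref{lem:lqqi} into $\xi_y(\nu_j,z):=\psi_w(\nu_j,z)+\Ec$ — which therefore has the asymptotic expansion \eqref{eq:xsewr} by \eqref{eq:psi_asymp} — produces a clean formula for $-\Log((q;q)_{n_j})$.

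It remains to sum over $j$ and combine with the identity
\[
	-H(\bm n)z=-\frac{N\log(2)^2(1+iy)^2}{2z}-\frac{\log(2)(1+iy)}{\sqrt z}\sum_{j=1}^N v_j-\frac12\bm v^T\bm v-(K+\tfrac12)(1+iy)\log(2)-\bm b^T\bm v\sqrt z,
\]
which follows from $\bm n=\frac{\log(2)}{\varepsilon}\bm1+\frac1{\sqrt z}\bm v$, $\frac z\varepsilon=1+iy$, $\frac{z}{\varepsilon^2}=\frac{(1+iy)^2}{z}$, and \eqref{eq:sbj}. One then collects terms by power of $z$: the $\frac1z$-contributions combine, via $\Li_2(w)=\sum_{n\ge1}\frac{w^n}{n^2}$, into $\frac{\La(y)}{z}$; the term $-\frac z2\frac{w}{1-w}\sum_j\nu_j^2=-\frac12\frac{w}{1-w}\bm v^T\bm v$ coming from Lemma~\ref{lem:GZ} merges with $-\frac12\bm v^T\bm v$ into $-\frac12\bm v^T\Ac\bm v$, since $\Ac=(1+\frac{w}{1-w})I_N$; the terms $-\sum_j(\nu_j+\frac12)\Log(1-w)$ assemble into the prefactor $(1-2^{-(1+iy)})^{-\frac N2-\frac1{\sqrt z}\sum_j v_j}$; the constant $-(K+\frac12)(1+iy)\log(2)$ exponentiates to $2^{-(K+\frac12)(1+iy)}$; and the $\frac N2\Log(\frac{z}{2\pi})$-term from Lemma~\ref{lem:lqqi} gives $(\frac{z}{2\pi})^{N/2}$. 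Exponentiating then yields the claimed identity, with $\prod_j\exp(\xi_y(\frac{v_j}{\sqrt z},z))$ collecting the remaining $\sum_j\psi_w(\nu_j,z)$. I do not expect a serious obstacle here: the argument is bookkeeping parallel to Proposition~\ref{prp:senr}, and the only genuinely new input is identifying the expansion point (hence $w=2^{-(1+iy)}$) and checking — this is where $-\frac23<\l<-\frac12$ is used — that the wider range $y\ll\varepsilon^{-1-\frac{3\l}{2}+\delta}$ still lies inside the domains of validity of Lemmas~\ref{lem:GZ} and~\ref{lem:lqqi}.
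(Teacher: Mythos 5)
Your proposal is correct and follows essentially the same route as the paper's proof: substitute $q^{n_j}=2^{-(1+iy)}e^{-v_j\sqrt z}$, check $\nu_j z=o(1)$ under the hypothesis on $y$, apply Lemma~\ref{lem:GZ} with $w=2^{-(1+iy)}$ together with Lemma~\ref{lem:lqqi}, absorb $\Ec$ into $\xi_y$, and combine with the expansion of $-H(\bm n)z$ exactly as in Proposition~\ref{prp:senr}. Your bookkeeping (the $\frac1z$-terms assembling into $\La(y)$, the quadratic terms into $-\frac12\bm v^T\Ac\bm v$, the $\Log(1-w)$-terms into the exponent $-\frac N2-\frac1{\sqrt z}\sum_j v_j$) matches what the paper leaves as ``analogous to Proposition~\ref{prp:senr}''.
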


\begin{rmk}
	Note the difference between $\bm u$ and $\bm v$ in \eqref{eq:ujd} and \eqref{eq:vjd}. Writing $\bm n$ in terms of $\bm v$ (instead of $\bm u$) allows to derive an asymptotic expansion which holds for a wider range in $y$.
\end{rmk}

\begin{proof}[Proof of \Cref{prp:sewr}]
	By \eqref{eq:vjd}, we have $q^{n_j}=2^{-(1+iy)}e^{-v_j\sqrt{z}}$. Define $\nu_j:=\frac{v_j}{\sqrt{z}}$. Then we have
	\begin{equation*}
		\nu_j = n_j - \frac{\log (2)}{\varepsilon} = O\rb{\varepsilon^\lambda}.
	\end{equation*}
	If $y\ll\e^{-1-\frac{3\l}{2}+\d}$, then $z\ll\e^{-\frac{3\l}2+\d_2}$, where $\d_2:=\min\{1+\frac{3\l}{2},\d\}>0$ and $\nu_jz=o(1)$.
	
	Define $\omega:= 2^{-(1+iy)}$. Observing that $|\omega|=|2^{-(1+iy)}|<1$, we may apply \Cref{lem:GZ} and deduce 
	\[
		\Log\left(\left(q^{n_j+1};q\right)_\infty\right) = -\frac{\Li_2(\om) }z- \left(\nu_j+\frac12\right)\Log(1-\om) - \frac{\nu_j^2z}{2}\frac{\om}{1-\om} + \psi_\om(\nu_j,z).
	\]
	Using \Cref{lem:lqqi} and \eqref{eq:qqnq}, we sum over $j$ and get, with $\Ec$ satisfying $\Ec\ll z^L$ for all $L\in\N$,
	\begin{multline*}
		-\sum_{j=1}^N \Log((q;q)_{n_j}) = -\frac {N\Li_2(\om)}z + \sum_{j=1}^N \left(- \left(\nu_j+\frac12\right)\Log(1-\om) - \frac{\nu_j^2z}{2}\frac{\om}{1-\om} + \psi_\om(\nu_j,z)\right)\\
		+\frac{\pi^2N}{6z} -\frac N2\Log\left(\frac{2\pi}{z}\right) - \frac{Nz}{24} + N\Ec.
	\end{multline*}
	Now let $\xi_y(\nu_j,z):=\psi_\om(\nu_j,z)+\Ec$. As we have $\Ec\ll z^L$ for all $L\in\N$, it follows that $\xi_y(\nu_j,z)$ has the same asymptotic expansion as $\psi_\om(\nu_j,z)$. Next, using \eqref{eq:sbj}, we have
	\[
		-H(\bm n)z = -\frac{N\log(2)^2(1+iy)}{2\e} - \frac{\log(2)(1+iy)}{\sqrt z}\sum_{j=1}^N v_j - \frac12\bm v^T\bm v - \log(2)\left(K+\frac12\right)(1+iy) - \bm b^T\bm v\sqrt z.
	\]
	The rest of the computation is analogous to that of \Cref{prp:senr}.
\end{proof}

Now we use \Cref{prp:sewr} to derive another asymptotic expansion for \eqref{eq:gfsd}; the proof of the proposition follows mutatis mutandis the proof of \Cref{prp:sanr} and is omitted here.

\begin{prp}\label{prp:sawr}
	Assume the setup in \Cref{prp:sewr}. If $y\ll\e^{-1-\frac{3\l}{2}+\d}$ for some $\d>0$, then we have an asymptotic expansion for $R\in\N$, uniformly in $\bm v$,
	\begin{multline*}
		\frac{e^{-H(\bm n)z}}{\prod_{j=1}^N (q;q)_{n_j}} = 2^{-\rb{K+\frac 12}(1+iy)} \left(1-2^{-(1+iy)}\right)^{-\frac N2}\left(\frac{z}{2\pi}\right)^\frac N2e^{\frac{\La(y)}{z}}\\
		\times e^{-\frac12\bm v^T\Ac\bm v+\frac{1}{\sqrt z}\left(-\log(2)(1+iy)-\Log\left(1-2^{-(1+iy)}\right)\right)\sum_{j=1}^N v_j}\sum_{r=0}^{R-1} D_r(\bm v,y)z^\frac r2\\
		+ z^\frac N2e^\frac{\La(y)}{z}e^{-\frac12\bm v^T\Ac\bm v+\frac{1}{\sqrt z}\left(-\log(2)(1+iy)-\Log\left(1-2^{-(1+iy)}\right)\right)\sum_{j=1}^N v_j}O\left(\e^{2R\d_2}\right),
	\end{multline*}
	where $\d_2:=\min\{1+\frac{3\l}{2},\d\}>0$, $D_r(\bm v,y)$ are defined as coefficients of the formal exponential
	\begin{equation*}
		\sum_{r\geq0} D_r(\bm v,y)z^\frac r2 := \exp(\phi(\bm v,z)),\qquad \phi(\bm v,z) := -\bm b^T\bm v\sqrt{z} - \frac{Nz}{24} + \sum_{j=1}^N \xi_y\rb{\frac{v_j}{\sqrt{z}},z},
	\end{equation*}
	and $\xi_y(\frac{v_j}{\sqrt{z}},z)$ has the asymptotic expansion given in \eqref{eq:xsewr}.
\end{prp}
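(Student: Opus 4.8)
The plan is to run the proof of \Cref{prp:sanr} essentially verbatim, substituting \Cref{prp:sewr} for \Cref{prp:senr}; the one genuinely new ingredient is the elementary observation that $|2^{-(1+iy)}|=\tfrac12$ for every real $y$, which keeps all of the $y$-dependent quantities that arise bounded uniformly in $y$. First I would rearrange the right-hand side of \Cref{prp:sewr}: writing $(1-2^{-(1+iy)})^{-\frac1{\sqrt z}\sum_{j}v_j}=\exp(-\frac1{\sqrt z}\Log(1-2^{-(1+iy)})\sum_{j}v_j)$ (principal branch) and collecting this with the term $-\tfrac{\log(2)(1+iy)}{\sqrt z}\sum_j v_j$ sitting inside the exponential, the right-hand side takes the form $\mathcal{P}(y,z)\exp(\phi(\bm v,z))$, where $\mathcal{P}(y,z)$ is exactly the prefactor displayed in \Cref{prp:sawr} and $\phi(\bm v,z)=-\bm b^T\bm v\sqrt z-\tfrac{Nz}{24}+\sum_{j=1}^N\xi_y(\tfrac{v_j}{\sqrt z},z)$ is the function whose formal exponential defines the $D_r(\bm v,y)$. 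Since $\exp(\phi(\bm v,z))=\sum_{r\ge0}D_r(\bm v,y)z^{\frac r2}$ as a formal power series in $z^{\frac12}$, the proposition reduces to the tail estimate $\sum_{r\ge R}D_r(\bm v,y)z^{\frac r2}=O(\e^{2R\d_2})$, uniformly in $\bm v$ and in $y$ throughout the allowed range; multiplying back by $\mathcal{P}(y,z)$ and using that $2^{-(K+\frac12)(1+iy)}$ and $(1-2^{-(1+iy)})^{-\frac N2}$ are $O(1)$ then yields the stated error shape.

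For the tail estimate I would repeat the degree-counting argument of \Cref{prp:sanr}. Because $|2^{-(1+iy)}|=\tfrac12$, the point $2^{-(1+iy)}$ stays in a fixed compact subset of the open unit disk and $1-2^{-(1+iy)}$ stays in the annulus $\{\tfrac12\le|w|\le\tfrac32\}$, so every polylogarithm value $\Li_{2-r}(2^{-(1+iy)})$ occurring in \eqref{eq:xsewr}, as well as the factors $(1-2^{-(1+iy)})^{\pm\frac N2}$ and $\Log(1-2^{-(1+iy)})$ in $\mathcal{P}$, is bounded uniformly in $y$ by constants depending only on $N$ and $R$. Writing $\phi(\bm v,z)=:\sum_{m\ge1}g_m(\bm v)z^{\frac m2}$, the cancellation of the $\delta_{r,2}\nu^2$ term in \eqref{eq:xsewr} together with $\deg B_m=m$ gives $\deg g_m\le m+2\le 3m$ just as in \Cref{prp:sanr}, and formal exponentiation then yields $D_0\equiv1$, $\deg D_r\le3r$, with coefficients bounded uniformly in $y$. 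From the setup of \Cref{prp:sewr} one has $\nu_j=n_j-\tfrac{\log 2}{\e}$ with $|\nu_j|\le\e^\l$ (being a coordinate of $\bm\mu$) and $|z|\ll\e^{-\frac{3\l}2+\d_2}$, so $|\bm v|=|\sqrt z|\,|\bm\mu|\ll\e^\l|z|^{\frac12}$, whence
\[
	D_r(\bm v,y)z^{\frac r2}\ll|\bm v|^{3r}|z|^{\frac r2}\ll\e^{3\l r}|z|^{2r}\ll\e^{3\l r}\e^{2r(-\frac{3\l}2+\d_2)}=\e^{2r\d_2},
\]
and summing over $r\ge R$ gives $O(\e^{2R\d_2})$; uniformity in $\bm v$ is automatic since the implied constants depend only on $N$, $K$, $R$.

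I expect the only real work — and the reason the paper omits it — to be bookkeeping of two kinds. First, one must check that the $O(z^{R-1})$ error in the expansion \eqref{eq:xsewr} of $\xi_y$ is uniform in $y$; this is immediate from \Cref{lem:GZ} (whose proof, as remarked, works for complex $z$) applied with $w=2^{-(1+iy)}$, since its error is uniform for $w$ in compact subsets of the unit disk and here $|w|$ is constant equal to $\tfrac12$. Second, one must verify that every exponent of $\e$ produced along the way — in the expansion of $\phi$, in the estimate $\nu_jz=o(1)$ coming from \Cref{prp:sewr}, and in the final tail bound — is positive; this is exactly where the constraint $-\tfrac23<\l<-\tfrac12$ and the definition $\d_2=\min\{1+\tfrac{3\l}2,\d\}$ enter, mirroring the role of $-\tfrac23<\l<-\tfrac12$ and $\d_1=\min\{\d,\tfrac23+\l\}$ in \Cref{prp:sanr}. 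With those two checks in place, the argument is line-by-line the one already given for \Cref{prp:sanr}.
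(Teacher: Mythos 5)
Your proposal is correct and takes exactly the route the paper intends: the paper omits this proof precisely because it is ``mutatis mutandis'' the proof of \Cref{prp:sanr}, and you carry that out with the right inputs ($|\nu_j|\ll\varepsilon^{\lambda}$, $|z|\ll\varepsilon^{-\frac{3\lambda}{2}+\delta_2}$, $\bigl|2^{-(1+iy)}\bigr|=\tfrac12$ giving uniform bounds on the $\Li_{2-r}$-coefficients and on the prefactors) and the correct exponent bookkeeping yielding the error $O\bigl(\varepsilon^{2R\delta_2}\bigr)$. The only cosmetic slip is the estimate $|D_r(\bm v,y)|\ll|\bm v|^{3r}$, which should be $\max\bigl(1,|\bm v|\bigr)^{3r}$ since $D_r$ need not vanish at $\bm v=\bm 0$; this does not affect the conclusion, because $\lambda<-\tfrac12$ gives $\delta_2\le 1+\tfrac{3\lambda}{2}<-\tfrac{\lambda}{2}$, so the lower-degree monomials contribute even smaller powers of $\varepsilon$.
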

\section{Error Estimates}\label{section:error}

In this section, we establish some error estimates which are used in \Cref{section:afp}.

\begin{prp}\label{prp:ore} 
For every $L\in\N$, as $\re(z)\to0$ in the right half-plane, we have
\[
\sum_{\bm n\in\N_0^N\setminus\Nc_{\e,\l}} \vb{\frac{e^{-H(\bm n)z}}{\prod_{j=1}^N (q;q)_{n_j}}} \ll \e^L e^\frac{\pi^2N}{12\e}.
\]
\end{prp}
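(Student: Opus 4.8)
The plan is to bound the summand by its value at $y=0$ and reduce to a one-variable tail estimate. Writing $z=\e(1+iy)$ and $q=e^{-z}$, we have $|q^k|=e^{-k\e}$, hence $|1-q^k|\ge1-e^{-k\e}$ and thus $|(q;q)_{n_j}|\ge(e^{-\e};e^{-\e})_{n_j}$, while $|e^{-H(\bm n)z}|=e^{-\e H(\bm n)}$ because $H(\bm n)\in\R$. Since $H(\bm n)=\tfrac12\bm n^T\bm n+\bm b^T\bm n$ has no cross terms, these combine into the $y$-uniform bound
\[
\vb{\frac{e^{-H(\bm n)z}}{\prod_{j=1}^N(q;q)_{n_j}}}\le\prod_{j=1}^N e^{\phi_j(n_j)},\qquad \phi_j(m):=-\e\rb{\tfrac12 m^2+b_j m}-\log\bigl((e^{-\e};e^{-\e})_m\bigr),
\]
so it suffices to prove $\sum_{\bm n\in\N_0^N\setminus\Nc_{\e,\l}}\prod_j e^{\phi_j(n_j)}\ll\e^L e^{\frac{\pi^2N}{12\e}}$ for every $L$. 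The \emph{full} sum over $\N_0^N$ is already $\asymp e^{\frac{\pi^2N}{12\e}}$, so the content is that removing the saddle neighbourhood $\Nc_{\e,\l}$ of $\tfrac{\log(2)}{\e}\bm1$ buys a super-polynomial factor.

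I would next establish two facts about each sequence $\phi_j$. First, since $e^{-\e(\frac12 m^2+b_j m)}=q^{\binom m2}(q^{b_j+\frac12})^m$ with $q=e^{-\e}$, Euler's identity $\sum_{m\ge0}q^{\binom m2}x^m/(q;q)_m=(-x;q)_\infty$ gives $\sum_{m\ge0}e^{\phi_j(m)}=\prod_{k\ge0}\bigl(1+e^{-\e(k+b_j+\frac12)}\bigr)$; as $b_j+\tfrac12=\tfrac jN+e_j>0$ this is at most $\prod_{k\ge0}(1+e^{-\e k})=2(e^{-2\e};e^{-2\e})_\infty/(e^{-\e};e^{-\e})_\infty$, and two applications of \Cref{lem:lqqi} (with $y=0$, to moduli $\e$ and $2\e$) show the right-hand side is $\ll e^{\frac{\pi^2}{12\e}}$, uniformly in $j$. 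Second, $\phi_j$ is strictly concave, its second difference being $-\e+\bigl(h((m+1)\e)-h(m\e)\bigr)<-\e$, where $h(t):=-\log(1-e^{-t})$ is decreasing; since $\phi_j(m)\to-\infty$ there is a maximiser $m_j^\ast$, the standard discrete concavity inequality gives $\phi_j(m)\le\phi_j(m_j^\ast)-\tfrac{\e}{4}|m-m_j^\ast|^2$ whenever $|m-m_j^\ast|\ge2$, and $e^{\phi_j(m_j^\ast)}\le\sum_m e^{\phi_j(m)}\ll e^{\frac{\pi^2}{12\e}}$ by the first fact. Finally, the first difference is $\Delta_j(m)=-m\e-\log(1-e^{-m\e})-\e(b_j-\tfrac12)$, and since $t\mapsto-t-\log(1-e^{-t})$ is a strictly decreasing bijection $(0,\infty)\to\R$ vanishing at $t=\log(2)$ with derivative $-2$ there, the sign change of $\Delta_j$ occurs at $m_j^\ast=\tfrac{\log(2)}{\e}+O(1)$, uniformly in $j$.

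To finish: if $\bm n\notin\Nc_{\e,\l}$ then $|\bm n-\tfrac{\log(2)}{\e}\bm1|>\e^\l$, so some coordinate has $|n_j-\tfrac{\log(2)}{\e}|>\e^\l/\sqrt N$; since $m_j^\ast=\tfrac{\log(2)}{\e}+O(1)$ and $\e^\l\to\infty$, for $\e$ small this forces $|n_j-m_j^\ast|>\tfrac12\e^\l/\sqrt N\ge2$. A union bound over $j$ and factorisation of the sum give
\[
\sum_{\bm n\in\N_0^N\setminus\Nc_{\e,\l}}\prod_{i=1}^N e^{\phi_i(n_i)}\le\sum_{j=1}^N\biggl(\sum_{|m-\frac{\log(2)}{\e}|>\e^\l/\sqrt N}e^{\phi_j(m)}\biggr)\prod_{i\ne j}\biggl(\sum_{m\ge0}e^{\phi_i(m)}\biggr).
\]
By the first fact the product over $i\ne j$ is $\ll e^{\frac{\pi^2(N-1)}{12\e}}$, while by the second the $j$-th inner factor is $\ll e^{\frac{\pi^2}{12\e}}\sum_{d\ge\frac12\e^\l/\sqrt N}e^{-\e d^2/4}\ll\e^{-\frac12}e^{\frac{\pi^2}{12\e}}e^{-c\e^{2\l+1}}$ for some $c>0$. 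Since $\l<-\tfrac12$ we have $2\l+1<0$, hence $\e^{2\l+1}\to\infty$ and $e^{-c\e^{2\l+1}}\ll\e^L$ for every $L$, and summing the $N$ terms completes the proof. The step that takes real work is the concavity/saddle analysis in the second fact — propagating the Gaussian-type decay of rate $\e$ out to distance $\e^\l$ (much larger than the natural width $\e^{-\frac12}$, yet still $o(\e^{-1})$), where crude estimates such as $(e^{-\e};e^{-\e})_m^{-1}\le(e^{-\e};e^{-\e})_\infty^{-1}$ are far too lossy; the remaining steps are routine.
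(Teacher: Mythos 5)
Your argument is correct, and it reaches the bound by a genuinely different route than the paper. The paper stays in $N$ variables: after the same reduction to $y=0$ via $|e^{-H(\bm n)z}|\prod_j|(q;q)_{n_j}|^{-1}\le e^{-H(\bm n)\e}\prod_j((|q|;|q|)_{n_j})^{-1}$, it derives a \emph{pointwise} upper bound on each summand (using \Cref{lem:lqqi} together with the elementary inequalities $e^x>1+x$ and $\frac1{e^x-1}>\frac1x-\frac12$, and the evaluation $L(\frac12)=-\frac{\pi^2}{12}$), showing that the logarithm of the summand is at most $\frac{\pi^2N}{12\e}-\frac14\e^{2l+1}$ whenever $|\bm\mu|>\e^{l}$, and then sums by decomposing $\N_0^N\sm\Nc_{\e,\l}$ into shells $\e^{-r+1}<|\bm\mu|\le\e^{-r}$ and counting $O(\e^{-Nr})$ lattice points in each. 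You instead exploit that the quadratic form in $H$ is diagonal: the $y=0$ sum factors into one-dimensional sums, the \emph{full} one-dimensional sum is evaluated in closed form by Euler's identity and bounded by $\ll e^{\frac{\pi^2}{12\e}}$ via two applications of \Cref{lem:lqqi}, and the restricted sum is controlled by discrete strict concavity of $\phi_j$ (second difference $<-\e$, maximiser at $\frac{\log(2)}{\e}+O(1)$), giving a global Gaussian tail $e^{-c\e^{2\l+1}}$ after a union bound over the coordinate that is far from the centre; I checked the details (the identity $\frac12m^2+b_jm=\binom m2+(b_j+\frac12)m$ with $b_j+\frac12=\frac jN+e_j>0$, the concavity inequality $\phi_j(m)\le\phi_j(m_j^\ast)-\frac\e4|m-m_j^\ast|^2$ for $|m-m_j^\ast|\ge2$, and the location of $m_j^\ast$) and they hold. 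What each approach buys: yours needs no shell decomposition or lattice-point counting, since the exact product formula handles the normalisation and the concavity bound is valid out to arbitrarily large $|m-m_j^\ast|$, making the tail estimate cleaner and structurally transparent (it also makes visible why the cutoff $\l<-\frac12$ is exactly what is needed); the paper's pointwise-bound-plus-counting scheme is cruder but less tied to the diagonal structure of the quadratic form, so it would adapt more readily to Nahm sums whose quadratic form has cross terms, where your factorisation into one-variable sums is unavailable.
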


\begin{proof}
	First suppose that $z=\e$ is real, i.e., $y=0$. Fix $\bm n\in\N_0^N\sm\Nc_{\e,\l}$, and let $\bm\mu\in\R^N$ be such that $n_j=\frac{\log(2)}{\e}+\mu_j$ for $1\le j\le N$. We use \eqref{eq:qqnq}. Observe that $q^{n_j}=\frac12e^{-\mu_j\e}$. Using the power series expansion $\log(1-x)=-\sum_{n\ge1}\frac{x^n}{n}$, we expand
	\begin{align}\label{eq:oors_le} 
		\sum_{j=1}^N \log\rb{\rb{q^{n_j+1};q}_\infty} = -\sum_{j=1}^{N}\sum_{k\geq1} \frac{1}{k\cdot2^{k}} \frac{e^{-k\mu_j\varepsilon}}{e^{k\varepsilon}-1}.
	\end{align}
	Since $e^x>1+x$ for all $x\ne0$ and $\frac{1}{e^x-1}>\frac1x-\frac12$ for $x>0$, we have
	\ba
		\frac{e^{-k\mu_j\varepsilon}}{e^{k\varepsilon}-1} > (1-k\mu_j\varepsilon) \rb{\frac{1}{k\varepsilon}-\frac 12} = \frac{1}{k\varepsilon} - \rb{\mu_j+ \frac 12} + \frac{k\mu_j\varepsilon}{2}.
	\ea
	Plugging this into \eqref{eq:oors_le} gives
	\begin{equation}\label{eq:oors_lb} 
		\sum_{j=1}^N \log\left(\left(q^{n_j+1};q\right)_\infty\right) < -N\Li_2\left(\frac12\right)\frac1\e + \sum_{j=1}^N \left(\log(2)\left(\mu_j+\frac12\right)-\frac{\mu_j\e}{2}\right).
	\end{equation}
	Combining \eqref{eq:oors_lb} and \Cref{lem:lqqi} yields for $L\in\N$, as $\e\to0$,
	\begin{equation}\label{eq:oors_lq} 
		\log\rb{\tfrac{e^{-H(\bm n)\varepsilon}}{\prod_{j=1}^N (q;q)_{n_j}}} < -H(\bm n)\varepsilon + \tfrac{\pi^2N}{6\varepsilon} - \Li_2\rb{\tfrac 12}\tfrac{N}{\varepsilon} - \tfrac N2\log\rb{\tfrac{\pi}{\varepsilon}} - \tfrac{N\varepsilon}{24} + \rb{\log (2) - \tfrac{\varepsilon}{2}}\sum_{j=1}^N \mu_j + N\mc E.
	\end{equation}
	
	Next we compute
	\begin{equation}\label{eq:oors_Hne} 
		-H(\bm n)\e = -\frac{\log(2)^2N}{2\e} - \log(2)\sum_{j=1}^N \mu_j - \bm\mu^T\bm\mu\frac\e2 - \log(2)\sum_{j=1}^N b_j - \bm b^T\bm\mu\e.
	\end{equation}
	Plugging \eqref{eq:oors_Hne} into \eqref{eq:oors_lq} and using \eqref{eq:Rogerdilog} gives 
	\begin{align}\label{eq:oors_T} 
		\log\rb{\frac{e^{-H(\bm n)\varepsilon}}{\prod_{j=1}^N (q;q)_{n_j}}} < &\; \frac{\pi^2N}{12\varepsilon} + T,
	\end{align}
	where 
	\begin{equation*}
		T := - \frac N2 \log\rb{\frac{\pi}{\varepsilon}} - \frac{N\varepsilon}{24} - \frac{\varepsilon}{2}\sum_{j=1}^N \mu_j - \bm\mu^T \bm\mu \frac{\varepsilon}{2} -\log (2)\sum_{j=1}^N b_j - \bm b^T \bm \mu \varepsilon + N\mc E.
	\end{equation*}
	If $\d>0$, $l<-\frac12-\d$, and $|\bm\mu|>\e^l$, then $T$ is dominated by $-\bm\mu^T\bm\mu\frac\e2$ and there exists $\e_0>0$, independent of $l$, such that $T<-\frac14\e^{2l+1}$ for $\e<\e_0$. Plugging this into \eqref{eq:oors_T} shows, for $\e<\e_0$, that
	\begin{equation}\label{eq:oors_qb} 
		\frac{e^{-H(\bm n)\e}}{\prod_{j=1}^N (q;q)_{n_j}} < e^{\frac{\pi^2N}{12\e}- \frac{\e^{2l+1}}4}.
	\end{equation}

	Now consider the sum
	\begin{equation}\label{E:sumr}
		\sum_{\bm n\in\N_0^N\setminus\Nc_{\e,\l}} \frac{e^{-H(\bm n)\e}}{\prod_{j=1}^N (q;q)_{n_j}} = \sum_{r\ge1} \sum_{\substack{\bm n\in\N_0^N\setminus\Nc_{\e,\l}\\\e^{-r+1}<|\bm\mu|\le\e^{-r}}} \frac{e^{-H(\bm n)\e}}{\prod_{j=1}^N (q;q)_{n_j}} =: \sum_{r\ge1} \Rc(r).
	\end{equation}
	For $\Rc(1)$, the sum is over $\bm n\in\N_0^N\sm\Nc_{\e,\l}$ with $1<|\bm\mu|\le\e^{-1}$. Note that $\bm n\in\N_0^N\sm\Nc_{\e,\l}$ implies $|\bm\mu|>\e^\l$, so we are really summing the terms with $\e^\l<|\bm\mu|\le\e^{-1}$. From \eqref{eq:oors_qb}, the summands in $\Rc(1)$ are bounded by $e^{\frac{\pi^2N}{12\e}-\frac{\e^{2\l+1}}{4}}$. Since $\Rc(1)$ contains $O(\e^{-N})$ terms, we conclude that for $\e<\e_0$
	\begin{equation}\label{eq:oors_R1b} 
		\Rc(1) \ll \frac{e^{\frac{\pi^2N}{12\e}-\frac{\e^{2\l+1}}4}}{\e^N}.
	\end{equation}
	Now consider the summands in $\Rc(r)$ for $r\ge2$. Since $|\bm\mu|>\e^{-r+1}$, it follows from \eqref{eq:oors_qb} that the summand is bounded by $e^{\frac{\pi^2N}{12\e}-\frac{\e^{3-2r}}{4}}$. As $\Rc(r)$ contains $O(\e^{-Nr})$ terms, we conclude that for $\e<\e_0$,
	\begin{equation}\label{eq:oors_Rrb} 
		\Rc(r) \ll \frac{e^{\frac{\pi^2N}{12\e}-\frac{\e^{3-2r}}{4}}}{\e^{Nr}}.
	\end{equation}
	Plugging \eqref{eq:oors_R1b} and \eqref{eq:oors_Rrb} into \eqref{E:sumr}, we deduce that
	\[
		\sum_{\bm n\in\N_0^N\sm\Nc_{\e,\l}} \frac{e^{-H(\bm n)\e}}{\prod_{j=1}^N (q;q)_{n_j}} = \sum_{r\ge1} \Rc(r) \ll e^\frac{\pi^2N}{12\e}\left(\frac{e^{-\e^\frac{2\l+1}{4}}}{\e^N}+\sum_{r\ge2} \frac{e^{-\frac{\e^{3-2r}}{4}}}{\e^{Nr}}\right) \ll \e^Le^\frac{\pi^2N}{12\e}
	\]
	for all $L\in\N$ as $\varepsilon\to 0$. This establishes the statement for $z$ real.

	The general case follows from the trivial bound
	\begin{equation*}
		\vb{\frac{e^{-H(\bm n)z}}{\prod_{j=1}^N (q;q)_{n_j}}} \leq \frac{e^{-H(\bm n)\varepsilon}}{\prod_{j=1}^N \rb{\vb{q};\vb{q}}_{n_j}}. \qedhere
	\end{equation*}
\end{proof}

The following proposition helps to establish the minor arc estimate for \eqref{eq:ggf}.

\begin{prp}\label{prp:mae} 
Let $\bm n\in\Nc_{\e,\l}$, and suppose that $\e^{-\d}<|y|\le\frac\pi\e$ for some $\d>0$ sufficiently small. Then we have, for all $L\in\N$,
\ba
\frac{e^{-H(\bm n)z}}{\prod_{j=1}^N (q;q)_{n_j}} \ll \varepsilon^L{e^\frac{\pi^2 N}{12\varepsilon}}.
\ea
\end{prp}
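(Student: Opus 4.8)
The plan is to pass to absolute values and estimate $\log|\cdot|$. Using $|e^{-H(\bm n)z}|=e^{-H(\bm n)\e}$ together with the factorization $(q;q)_\infty=(q;q)_{n_j}(q^{n_j+1};q)_\infty$, one has the exact identity
\[
	\log\left|\frac{e^{-H(\bm n)z}}{\prod_{j=1}^N(q;q)_{n_j}}\right| = -H(\bm n)\e + \sum_{j=1}^N\log\left|(q^{n_j+1};q)_\infty\right| - N\log|(q;q)_\infty|,
\]
and I would bound the three pieces separately so that the right-hand side is $\le\tfrac{\pi^2N}{12\e}-\tfrac{\eta N}{\e}+o(\e^{-1})$ for a fixed $\eta>0$; exponentiating then finishes the proof, since $e^{-\eta N/\e+o(\e^{-1})}\ll\e^L$ for every $L$.

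The first two pieces are ``saddle'' terms insensitive to the minor arc. Writing $n_j=\tfrac{\log(2)}{\e}+\mu_j$ with $|\bm\mu|\le\e^\l$ and using \eqref{eq:sbj}, a direct expansion of $H$ gives $-H(\bm n)\e=-\tfrac{N\log(2)^2}{2\e}+o(\e^{-1})$ uniformly on $\Nc_{\e,\l}$, the error coming from terms of size $O(\e^\l)+O(\e^{1+2\l})$, which are $o(\e^{-1})$ since $-\tfrac23<\l<-\tfrac12$. For the tails, the elementary bound $\bigl|\log|1-w|\bigr|\le-\log(1-|w|)$ for $|w|<1$ (from $1-|w|\le|1-w|\le1+|w|$ and $\log(1+x)\le x\le-\log(1-x)$), applied termwise, gives $\bigl|\log|(q^{n_j+1};q)_\infty|\bigr|\le-\log\bigl(|q|^{n_j+1};|q|\bigr)_\infty$; since $|q|=e^{-\e}$ and $|q|^{n_j+1}=\tfrac12+O(\e^{1+\l})$, \Cref{lem:GZ} applied with the real argument $\e$ (and $\nu=0$) yields $-\log(|q|^{n_j+1};|q|)_\infty=\tfrac{\Li_2(1/2)}{\e}+o(\e^{-1})$, uniformly on $\Nc_{\e,\l}$ and independently of $y$. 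Using $\Li_2(\tfrac12)=\tfrac{\pi^2}{12}-\tfrac{\log(2)^2}{2}$, the running estimate becomes
\[
	\log\left|\frac{e^{-H(\bm n)z}}{\prod_{j=1}^N(q;q)_{n_j}}\right| \le \left(\frac{\pi^2}{12}-\log(2)^2\right)\frac N\e - N\log|(q;q)_\infty| + o\bigl(\e^{-1}\bigr).
\]

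The crux is a minor-arc bound for $(q;q)_\infty$. I would set $\t=\tfrac{iz}{2\pi}$, so that $\im\t=\tfrac\e{2\pi}$, $\re\t=\tfrac{\e y}{2\pi}$, and $(q;q)_\infty=e^{z/24}\eta(\t)$. Choosing $\gamma\in\SL_2(\Z)$ with lower row $(c,d)$, $c\ge1$, that reduces $\t$ to the standard fundamental domain, the transformation law of the Dedekind $\eta$-function gives $-\log|(q;q)_\infty|=\tfrac{\pi}{12}\im(\gamma\t)+O(\log\tfrac1\e)$, where $\im(\gamma\t)=\tfrac{\im\t}{|c\t+d|^2}\le\tfrac{2\pi}{c^2\e}$. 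On the minor arc $\tfrac{\e^{1-\d}}{2\pi}<|\re\t|\le\tfrac12$, so $\t$ stays away from the cusp $0$: if $c\ge2$ then $-\log|(q;q)_\infty|\le\tfrac{\pi^2}{24\e}+o(\e^{-1})$, while if $c=1$ then (as $|\re\t|\le\tfrac12$ forces $d=0$) $|c\t+d|^2=|\t|^2\ge(\re\t)^2\ge\tfrac{\e^{2-2\d}}{4\pi^2}$, whence $-\log|(q;q)_\infty|\le\tfrac{\pi^2}{6}\e^{2\d-1}+O(\log\tfrac1\e)=o(\e^{-1})$. In either case $-\log|(q;q)_\infty|\le\tfrac{\pi^2}{24\e}+o(\e^{-1})$. (One could instead give an elementary Farey argument: write $-\log|(q;q)_\infty|=\re\sum_{m\ge1}\tfrac1{m(e^{mz}-1)}$, bound $\tfrac1{|e^{mz}-1|}\le\min\bigl(\tfrac1{m\e},\tfrac1{4\|m\e y/(2\pi)\|}\bigr)$, and dissect $\tfrac{\e y}{2\pi}$ by a continued-fraction convergent with denominator cut at $\e^{-1/2}$.)

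Combining the displays, $\log|\,\cdot\,|\le\bigl(\tfrac{\pi^2}{8}-\log(2)^2\bigr)\tfrac N\e+o(\e^{-1})$, and since $24\log(2)^2>\pi^2$ we may take $\eta:=\log(2)^2-\tfrac{\pi^2}{24}>0$; hence the total is $\le\tfrac{\pi^2N}{12\e}-\tfrac{\eta N}{\e}+o(\e^{-1})$, and exponentiating yields the proposition. I expect the main obstacle to be the minor-arc bound for $(q;q)_\infty$ in the third step: one must control all ``cusps'' $\tfrac{\e y}{2\pi}\approx\tfrac hk$ uniformly (handled cleanly by the $\eta$-transformation, or by the Farey dissection), and one must check the rather tight numerical margin $\log(2)^2>\tfrac{\pi^2}{24}$ on which the whole scheme hinges.
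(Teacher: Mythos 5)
Your proof is correct, but it takes a genuinely different route from the paper's. The paper argues directly on the finite products: for $|\Arg(q)|>\e^{1-\d}$ a positive proportion $C$ (independent of $j$) of the powers $q^k$, $1\le k\le n_j$, lie in the closed left half-plane, where $|1-q^k|>1$; discarding those factors and bounding the remaining ones by the real product $(|q|;|q|)_{\flo{(1-C)n_j}}$ reduces the minor-arc estimate to the off-saddle bound of \Cref{prp:ore}, since $(\flo{(1-C)n_1},\dots,\flo{(1-C)n_N})\notin\Nc_{\e,\l}$. You instead pass to $(q^{n_j+1};q)_\infty/(q;q)_\infty$, bound the tail factors trivially in absolute value (worst case $+\Li_2(\frac12)/\e$ each), and extract the saving from the modular transformation of $\eta$ applied to $1/(q;q)_\infty$ on the minor arc, closing with the numerical margin $\log(2)^2>\frac{\pi^2}{24}$. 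Both arguments are complete: yours gives an explicit exponential saving $e^{-\eta N/\e}$ with $\eta=\log(2)^2-\frac{\pi^2}{24}>0$, uniformly on the whole arc, whereas the paper's is more elementary and self-contained (no modularity, no numerical coincidence) and recycles the work already done for \Cref{prp:ore}. Three small remarks on your write-up: with $z=\e(1+iy)$ one has $\re\t=-\frac{\e y}{2\pi}$ (the sign is immaterial since you only use $|\re\t|$); in the $c=1$ case the parenthetical ``$|\re\t|\le\frac12$ forces $d=0$'' should be read as ``the reducing choice of $d$ is the nearest integer to $-\re\t$, namely $0$, and for $d\ne0$ one has $|\t+d|\ge\frac12$, so $\im(\gamma\t)\ll\e$ anyway''; and the cruder bound $\log|1-w|\le|w|$ gives $\sum_{k>n_j}|q|^k\sim\frac1{2\e}$ per tail instead of $\Li_2(\frac12)/\e$, which enlarges your margin, so the scheme is less tight than your final sentence suggests.
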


\begin{proof}
	As $q=e^{-z}=e^{-\e(1+iy)}$, the assumption $\e^{-\d}<|y|\le\frac\pi\e$ implies that $\e^{1-\d}<|\Arg(q)|\le\pi$. For $1\le j\le N$, define the sets 
	\[
		\mc Q_j := \cb{q^k : 1\le k\le n_j},\qquad \mc Q_j^- := \cb{s\in\mc Q_j : \re(s)\le0},\qquad \mc Q_j^+ := \mc Q_j\bs\mc Q_{j}^-.
	\]
	If $\e^{1-\d}<|\Arg(q)|\le\pi$ and $\bm n\in\Nc_{\e,\l}$, then one can show that there exists $C>0$, independent of $j$, such that $|\Qc_j^-|\ge C n_j$ for $\e>0$ sufficiently small.

	Next we give an upper bound for $|(q;q)_{n_j}^{-1}|$. For this, we note that for $s\in\C$, we have $|1-s|>1$ if $\re(s)<0$. It follows that
	\[
		\vb{\frac{1}{(q;q)_{n_j}}} \le \prod_{\substack{1\le k\le n_j\\q^k\in\Qc_j^+}} \frac{1}{\left|\rule{0pt}{3.5mm}{1-q^k}\right|} \le \prod_{\substack{1\le k\le n_j\\q^k\in\Qc_j^+}} \frac{1}{1-|q|^k}.
	\]
	As shown above for $\e>0$ sufficiently small, we have $|\Qc_j^-|\ge Cn_j$. It follows that the number of terms in the product is bounded above by $|\Qc_j^+|\le(1-C)n_j$. As $(1-|q|^k)^{-1}$ decreases as $k$ increases, we obtain an upper bound
	\[
		\vb{\frac{1}{(q;q)_{n_j}}} \le \prod_{\substack{1\le k\le n_j\\q^k\in\Qc_j^+}} \frac{1}{1-|q|^k} \le \prod_{k=1}^{\left\lfloor{(1-C)n_j}\right\rfloor} \frac{1}{1-|q|^k} = \frac{1}{(|q|;|q|)_{\left\lfloor{(1-C)n_j}\right\rfloor}}.
	\]
	This yields
	\[
		\vb{\frac{e^{-H(\bm n)z}}{\prod_{j=1}^N (q;q)_{n_j}}} \le \frac{e^{-H(\bm n)\varepsilon}}{\prod_{j=1}^N (|q|;|q|)_{\left\lfloor (1-C)n_j\right\rfloor}}.
	\]
	
	Finally, if $\bm n\in\Nc_{\e,\l}$ and $\e>0$ is sufficiently small, then we have
	\[
		\frac{e^{-H(\bm n)\e}}{\prod_{j=1}^N (|q|;|q|)_{\flo{(1-C)n_j}}} \le \frac{e^{-H(\flo{(1-C)n_1},\dots,\flo{(1-C)n_N})\e}}{\prod_{j=1}^N (|q|;|q|)_{\flo{(1-C)n_j}}} = o\left(\e^Le^\frac{\pi^2N}{12\e}\right)
	\]
	for all $L\in\N$ by \Cref{prp:ore}, since $(\flo{(1-C)n_1},\dots,\flo{(1-C)n_N})\not\in\Nc_{\e,\l}$.
\end{proof}

\section{Asymptotics for $d_{\alpha,\beta;N}^{[K]}(n)$}\label{section:afp}

We split the sum $\Dc_{\a,\b;N}^{[K]}$ as
\begin{equation}\label{eq:gfDd} 
	\mc D_{\alpha,\beta;N}^{[K]}(q) = \sum_{\bm\ell\in(\Z/N\Z)^N} \mc D_{\alpha,\beta;N,\bm\ell}^{[K]}(q),\qquad \mc D_{\alpha,\beta;N,\bm\ell}^{[K]}(q) := \sum_{\bm n\in\mc S_{\alpha,\beta;N,\bm\ell}} \frac{q^{NH(\bm n)}}{\prod_{j=1}^N \rb{q^N;q^N}_{n_j}},
\end{equation}
where $\Sc_{\a,\b;N,\bm\ell}:=\{\bm n\in\Sc_{\a,\b}:\bm n\equiv\bm\ell\Pmod N\}$. We always pick the representative for $\bm\ell\in(\Z/N\Z)^N$ with $0\le\ell_j<N$ ($1\le j\le N$). Let $\z_N := e^{\frac{2\pi i}N}$. Then we have
\begin{equation*}
	\mc D_{\alpha,\beta;N,\bm\ell}^{[K]}(\zeta_N e^{-z}) = \zeta^{NH(\bm\ell)}_Nf_{\alpha,\beta;N,\bm\ell}^{[K]}(z), 
\end{equation*}
where
\begin{equation*}
	f_{\alpha,\beta;N,\bm\ell}^{[K]}(z) := \sum_{\bm n\in\mc S_{\alpha,\beta;N,\bm\ell}} \frac{e^{-{NH}(\bm n)z}}{\prod_{j=1}^N \rb{e^{-Nz};e^{-Nz}}_{n_j}}.
\end{equation*}
It is more convenient to investigate
\begin{equation*}
	g_{\alpha,\beta;N,\bm\ell}^{[K]}(z) := f_{\alpha,\beta;N,\bm\ell}^{[K]}\rb{\frac zN} = \sum_{\bm n\in\mc S_{\alpha,\beta;N,\bm\ell}} \frac{e^{-H(\bm n)z}}{\prod_{j=1}^N \rb{e^{-z};e^{-z}}_{n_j}}.
\end{equation*}
We split
\begin{equation}\label{eq:gKs} 
	g_{\a,\b;N,\bm\ell}^{[K]}(z) = g_{\a,\b;N,\bm\ell}^{[K,1]}(z) + g_{\a,\b;N,\bm\ell}^{[K,2]}(z),
\end{equation}
where
\begin{equation*}
	g_{\alpha, \beta;N,\bm\ell}^{[K,1]}(z) := \sum\limits_{\bm n \in \mc S_{\alpha, \beta;N,\bm\ell} \cap \mc N_{\varepsilon,\lambda}} \frac{e^{-H(\bm n)z}}{\prod_{j=1}^N (e^{-z};e^{-z})_{n_j}}, \quad g_{\alpha, \beta;N,\bm\ell}^{[K,2]}(z) := \sum\limits_{\bm n \in \mc S_{\alpha, \beta;N,\bm\ell} \bs \mc N_{\varepsilon,\lambda}} \frac{e^{-H(\bm n)z}}{\prod_{j=1}^N (e^{-z};e^{-z})_{n_j}}.
\end{equation*}

\begin{prp}\label{prp:gK1a} 
	If $y \ll\varepsilon^{1+\lambda+\delta}$ for some $\delta>0$, then we have, for $R\in\N$ as $z\to 0$,
	\begin{equation*}
		g_{\a,\b;N,\bm\ell}^{[K,1]}(z) = \frac{e^\frac{\pi^2N}{12z}}{2^{K+\frac12}\pi^\frac N2}\sum_{r=0}^{R-1} E_{\bm\ell,r} z^\frac r2 + O\rb{\varepsilon^{N\rb{\lambda+\frac 12}+3R\rb{\lambda+\frac 23}}e^\frac{\pi^2N}{12\varepsilon}},
	\end{equation*}
	where the $E_{\bm\ell,r}$ are explicitly computable. In particular, for every $L\in\N$, we have as, $z\to 0$, with $R_0=R_0(L):=\cei{\frac{L-N(\l+\frac12)}{3\l+2}}$
	\[
		g_{\a,\b;N,\bm\ell}^{[K,1]}(z) = \frac{e^\frac{\pi^2N}{12z}}{2^{K+\frac12}\pi^\frac N2}\sum_{r=0}^{R_0-1} E_{\bm\ell,r}z^\frac r2 + O\left(\e^Le^\frac{\pi^2N}{12\e}\right).
	\]
\end{prp}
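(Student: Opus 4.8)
The plan is to substitute the per-summand expansion of \Cref{prp:sanr} into the sum defining $g_{\a,\b;N,\bm\ell}^{[K,1]}$ and to evaluate the resulting Gaussian-type lattice sums by iterated Euler--Maclaurin summation (\Cref{prp:EMF}).

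\textbf{Step 1: reduction to Gaussian lattice sums.} First observe that $y\ll\e^{1+\l+\d}$ implies the hypothesis $y\ll\e^{\frac13+\d'}$ of \Cref{prp:sanr} for a suitable $\d'>0$, since $1+\l>\frac13$ as $\l>-\frac23$. Hence for any truncation order $S\in\N$, \Cref{prp:sanr} gives uniformly in $\bm u$
\[
	\frac{e^{-H(\bm n)z}}{\prod_{j=1}^N(e^{-z};e^{-z})_{n_j}} = \leg z\pi^{\frac N2}\frac{e^{\frac{\pi^2N}{12z}}}{2^{K+\frac12}}\sum_{r=0}^{S-1}C_r(\bm u)e^{-\bm u^T\bm u}z^{\frac r2} + O\rb{\e^{3S\d_1}}z^{\frac N2}e^{\frac{\pi^2N}{12z}}e^{-\bm u^T\bm u},
\]
with $\d_1=\min\{\d',\frac23+\l\}>0$. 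Summing over $\bm n\in\Sc_{\a,\b;N,\bm\ell}\cap\Nc_{\e,\l}$, i.e. over $\bm u\in\Tc_{\a,\b;N,\bm\ell}\cap\Uc_{\e,\l}$, and recalling $G_r(\bm u)=e^{\frac{\pi^2N}{12z}}C_r(\bm u)e^{-\bm u^T\bm u}$, reduces the problem to the asymptotic evaluation of $\Sigma_r(z):=\sum_{\bm u\in\Tc_{\a,\b;N,\bm\ell}\cap\Uc_{\e,\l}}G_r(\bm u)$ for $0\le r\le S-1$, plus a bound for the aggregated error: since $\sum_{\bm u}e^{-\bm u^T\bm u}\ll\e^{-N/2}$ by comparison with a Gaussian integral (lattice spacing $N\sqrt z\asymp\sqrt\e$ per coordinate) and $\lvert e^{\frac{\pi^2N}{12z}}\rvert\le e^{\frac{\pi^2N}{12\e}}$, the aggregated error is $O(\e^{3S\d_1}e^{\frac{\pi^2N}{12\e}})$.

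\textbf{Step 2: iterated Euler--Maclaurin.} To evaluate $\Sigma_r(z)$ I peel off the distinguished coordinate $u_\a$, which runs over the progression $\mf u_\a(\bm{u_{[1]}})=\{u_\b+t\sqrt z:t\in[\ell_\a-\ell_\b]_N+N\N_0\}$ encoding $n_\a>n_\b$ and $\bm n\equiv\bm\ell\pmod N$. As $C_r$ is a polynomial of degree $\le3r$ and $e^{-\bm u^T\bm u}$ decays rapidly along $u_\b+\sqrt z\,\R_{\ge0}$ (here $\re(z)>0$ forces $\re(u_\a^2)\to+\infty$), the rapid-decay form of \Cref{prp:EMF} applies to $G_{r,\a}(\bm{u_{[1]}})=\sum_{u_\a}G_r(\bm{u_{[1]}},u_\a)$, producing the half-line integral $\tfrac1{N\sqrt z}\int_{u_\b}^{u_\b+\sqrt z\infty}G_r\,du_\a$, the corrections $\sum_sW_sG_r^{(s)}(\bm{u_{[1]}},u_\b)z^{s/2}$, and a remainder integral --- this establishes the expansion defining the $W_s$. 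Next I sum over $\bm{u_{[1]}}\in\mc U_{\bm{[1]}}$: I first replace the ball $\{\lvert\bm{\mu_{[1]}}\rvert\le\e^\l\}$ by all of $\R^{N-1}$ at exponentially small cost (Gaussian decay, radius $\e^\l\to\infty$), and then apply \Cref{prp:EMF} coordinate by coordinate again; after the extension each surviving term is a genuine (derivative of a) Gaussian integral over $\R^{N-1}$ of a polynomial times $e^{-\bm u^T\bm u}$, hence explicitly computable, and the $N-1$ further sum-to-integral conversions contribute a factor $(N\sqrt z)^{-(N-1)}$ which, together with $\tfrac1{N\sqrt z}$ from the $u_\a$-integral, yields the overall $z^{-N/2}$ scaling. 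The outcome is $\Sigma_r(z)=e^{\frac{\pi^2N}{12z}}\sum_{s=-N}^{S'-1}V_{r,s}z^{s/2}+(\text{error})$, the expansion defining the $V_{j,r}$, valid for any order $S'$ with the error recorded in the Notation section.

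\textbf{Step 3: reassembly and error bookkeeping.} Substituting the expansion of $\Sigma_r$ back, multiplying by $\leg z\pi^{\frac N2}2^{-K-\frac12}$, and collecting powers of $z^{1/2}$ gives
\[
	g_{\a,\b;N,\bm\ell}^{[K,1]}(z) = \frac{e^{\frac{\pi^2N}{12z}}}{2^{K+\frac12}\pi^{\frac N2}}\sum_{m\ge0}E_{\bm\ell,m}z^{\frac m2} + (\text{errors}),
\]
with $E_{\bm\ell,m}:=\sum_{\substack{r\ge0,\,s\ge-N\\ r+s=m-N}}V_{r,s}$ explicit. It then remains to choose the internal orders $S$, $S'$ (and the order in \Cref{prp:EMF}) as explicit, essentially linear functions of the given $R$ so that the $O(\e^{3S\d_1})$ term, the Euler--Maclaurin remainders, and the tail $\sum_{m\ge R}E_{\bm\ell,m}z^{m/2}$ are all $O\bigl(\e^{N(\l+\frac12)+3R(\l+\frac23)}e^{\frac{\pi^2N}{12\e}}\bigr)$; here one uses $\l+\tfrac12<0<\l+\tfrac23$ together with $z\asymp\e$ to combine the various $\e$-powers with the factors $z^{\pm N/2}$, and that $\e^{m/2}\ll\e^{N(\l+\frac12)+3R(\l+\frac23)}$ for $m\ge R$. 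This proves the first assertion. For the ``in particular'' part, apply it with $R=R_0(L)=\cei{\tfrac{L-N(\l+\frac12)}{3\l+2}}$: then $N(\l+\tfrac12)+3R_0(\l+\tfrac23)=N(\l+\tfrac12)+R_0(3\l+2)\ge L$ since $3\l+2>0$, so the error becomes $O(\e^Le^{\frac{\pi^2N}{12\e}})$.

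\textbf{Main obstacle.} The delicate point is the uniform bookkeeping of errors in Step 2: the sum runs over $\asymp\e^{N\l}$ lattice points while the effective support of the Gaussian reaches radius $\asymp\e^{\l+1/2}\to\infty$, so the Euler--Maclaurin remainder integrals $z^{2R}\!\int\lvert f^{(2R+1)}\rvert$ must be bounded with exactly the right dependence on $\e$ and on the order, and then propagated through the change of variables $\bm n\mapsto\bm u$ and through the $y$-dependence, which is precisely what forces the hypothesis $y\ll\e^{1+\l+\d}$. Verifying the hypotheses of \Cref{prp:sanr} and the holomorphy and rapid decay needed to apply \Cref{prp:EMF} along the relevant complex rays is routine but must be carried out carefully for complex $z$.
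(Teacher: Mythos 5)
Your overall architecture is the same as the paper's (substitute \Cref{prp:sanr} into the sum, evaluate $\sum_{\bm u\in\Tc_{\a,\b;N,\bm\ell}\cap\Uc_{\e,\l}}G_r(\bm u)$ by iterated Euler--Maclaurin as in the paper's \Cref{lem:Gjua}, assemble $E_{\bm\ell,r}$ as convolutions of the $V_{j,r}$, and choose $R_0$ at the end). However, there is a genuine gap at the one place where you give a concrete error estimate. In Step 1 you bound the aggregated error by writing $\vb{e^{\frac{\pi^2N}{12z}}}\le e^{\frac{\pi^2N}{12\e}}$ and separately claiming $\sum_{\bm u}\vb{e^{-\bm u^T\bm u}}\ll\e^{-\frac N2}$ ``by comparison with a Gaussian integral''. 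This separated bound is false in the range the proposition must cover. The points $u_j$ lie on the shifted, rotated line $-\frac{\log(2)}{\sqrt z}+\R\sqrt z$, so $\re(-\bm u^T\bm u)=\sum_j\bigl(\im(u_j)^2-\re(u_j)^2\bigr)$, and for $\bm u\in\Uc_{\e,\l}$ one has $\im(u_j)\asymp y\e^{-\frac12}$. Since the hypothesis allows $y$ as large as $\e^{1+\l+\d}$ with $1+\l<\frac12$, the quantity $y^2/\e$ can tend to infinity (of size up to $\e^{1+2\l+2\d}$), so $\vb{e^{-\bm u^T\bm u}}$ can be superpolynomially large near the center of the ball, where $\re(u_j)$ is small; no Gaussian comparison for the modulus alone can hold there. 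The correct statement is the \emph{joint} bound $\vb{e^{\frac{\pi^2N}{12z}}e^{-\bm u^T\bm u}}\le e^{\frac{\pi^2N}{12\e}}$, which is exactly the paper's \Cref{lem:eb}: one must play the loss $e^{\frac34N\log(2)^2\frac{y^2}{\e}(1+o(1))}$ from the imaginary parts against the gain $e^{-\frac{\pi^2N}{12}\frac{y^2}{\e}(1+o(1))}$ coming from $\re\rb{\frac1z}=\frac1\e\rb{1-y^2+O(y^4)}$, and the argument only closes because of the numerical inequality $\frac34\log(2)^2<\frac{\pi^2}{12}$. With this joint bound one only gets the paper's error $O\rb{\e^{N(\l+\frac12)+3S(\l+\frac23)}e^{\frac{\pi^2N}{12\e}}}$ (number of lattice points $\ll\e^{N\l}$ times $z^{\frac N2}$ times the per-term error), not the stronger $O\rb{\e^{3S\d_1}e^{\frac{\pi^2N}{12\e}}}$ you claim; fortunately the weaker rate is precisely what the proposition asserts.

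The same missing ingredient infects Step 2: every Euler--Maclaurin remainder and every Taylor remainder requires a bound of the type $G_j^{(k)}(\bm u)\ll\e^{(\l+\frac12)(3j+k)}e^{\frac{\pi^2N}{12\e}}$ on $\Uc_{\e,\l}$ (the paper's \eqref{eq:s1Gjkd}), which again rests on \Cref{lem:eb} rather than on naive Gaussian decay; and your ``exponentially small cost'' extensions of sums and integrals beyond the ball need the complex-geometry argument of the paper's \Cref{lem:uoor} (there one shows $\im(u_j)\ll\e^\d\re(u_j)$ once $|\bm\mu|>\e^\l$, so genuine decay does hold \emph{outside} the ball). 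You correctly flag the remainder bookkeeping as the main obstacle, but the one mechanism you propose for it does not work for complex $z$ with $y$ up to $\e^{1+\l+\d}$; supplying the analogue of \Cref{lem:eb} (and of \Cref{lem:uoor}) is the essential missing step, after which your assembly of the $V_{j,r}$, the definition $E_{\bm\ell,r}=\sum_{j=0}^rV_{j,r-j-N}$, and the choice of $R_0(L)$ go through as in the paper.
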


We prove \Cref{prp:gK1a} in a series of lemmas. Let $\Tc_{\a,\b;N,\bm\ell}$ be the bijective image of $\Sc_{\a,\b;N,\bm\ell}$ under $\bm n\mapsto\bm u$ given in \eqref{eq:ujd}, and $\Uc_{\e,\l}$ the bijective image of $\Nc_{\e,\l}$ under the same map. Since $\bm n\mapsto\bm u$ takes $\R^N$ to $(-\frac{\log(2)}{\sqrt z}+\R\sqrt z)^N$, we have $\Tc_{\a,\b;N,\bm\ell}\sbe(-\frac{\log(2)}{\sqrt z}+\R\sqrt z)^N\sbe\C^N$.

\begin{lem}\label{lem:uoor} 
	If $y\ll \varepsilon^{1+\lambda+\delta}$ for some $\delta>0$ and $P(\bm u)$ is a polynomial in $\bm u$, then, as $z\to 0$,
	\[
		\sum_{\bm u\in\Tc_{\a,\b;N,\bm\ell}\sm\Uc_{\e,\l}} \vb{P(\bm u)e^{-\bm u^T\bm u}} = O\left(\e^L\right), \quad\text{and}\quad \int_{\left(-\frac{\log(2)}{\sqrt z}+\R\sqrt z\right)^N\big\bs\Uc_{\e,\l}} \vb{P(\bm u)e^{-\bm u^T\bm u}} \bm{du} = O\left(\e^L\right).
	\]
	for all $L\in\N$.
\end{lem}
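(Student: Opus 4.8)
The plan is to reduce both estimates to a single tail bound for a Gaussian-weighted polynomial over a region bounded away from the ``center'' $\bm u = \bm 0$, and then to exploit that, for $\bm n \in \N_0^N \setminus \Nc_{\e,\l}$, the corresponding $\bm u$ satisfies $|\bm u| \gg \e^{\lambda - \frac12} \cdot |\sqrt z|^{-1} \cdot (\text{something})$; more precisely, recall that under the map \eqref{eq:ujd} we have $n_j = \frac{\log 2}{z} + \frac{u_j}{\sqrt z}$, while $\Nc_{\e,\l}$ is cut out by $|\bm\mu| \le \e^\lambda$ where $\mu_j = n_j - \frac{\log 2}{\e}$. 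First I would express $u_j$ in terms of $\mu_j$: from \eqref{eq:nunr}, $\nu_j = u_j/\sqrt z = (1 - \frac{1}{1+iy})\frac{\log 2}{\e} + \mu_j$, and since $y \ll \e^{1+\lambda+\delta}$ the first term is $O(\e^{\lambda+\delta})$ (using $1 - \frac{1}{1+iy} = \frac{iy}{1+iy} \ll |y|$), hence negligible compared to the constraint $|\bm\mu| > \e^\lambda$ that defines being outside $\Nc_{\e,\l}$. Also $z \asymp \e$ in this range, so $|\bm u| = |\sqrt z| \, |\bm\nu| \asymp \sqrt\e \, |\bm\mu|$ up to the lower-order shift; in particular $\bm u \in \Tc_{\a,\b;N,\bm\ell}\setminus\Uc_{\e,\l}$ forces $|\bm u| \gg \e^{\lambda + \frac12}$, and more usefully, on the dyadic shell $\e^{-r+1} < |\bm\mu| \le \e^{-r}$ one gets $|\bm u| \gg \e^{\frac12 - r + 1} = \e^{\frac32 - r}$.

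Next I would set up the Gaussian tail estimate. Write $\re(\bm u^T \bm u) = |\re \bm u|^2 - |\im \bm u|^2$; since each $u_j \in -\frac{\log 2}{\sqrt z} + \R\sqrt z$, writing $\sqrt z = \sqrt\e \, \sqrt{1+iy}$ and using $y \ll \e^{1+\lambda+\delta} = o(1)$, one checks $\arg(\sqrt z) = O(y) = o(1)$, so the ray $\R\sqrt z$ makes a vanishingly small angle with $\R$; consequently $\re(\bm u^T\bm u) \ge \frac12 |\bm u|^2 - O(\e^{-2} \cdot |y|)$ — here the $O(\e^{-2})$ comes from the $-\frac{\log 2}{\sqrt z}$ shift having modulus $\asymp \e^{-1/2}$, whose square contributes $\asymp \e^{-1}$, times the angular defect; one must verify this shift-contribution is itself $o(1)$ or at worst subpolynomial, which it is since $|y| \ll \e^{1+\lambda+\delta}$ kills the $\e^{-1}$. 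Thus $|P(\bm u) e^{-\bm u^T\bm u}| \ll |P(\bm u)| e^{-\frac14 |\bm u|^2}$ on the relevant region once $\e$ is small. Then, exactly as in the proof of \Cref{prp:ore}, I would dyadically decompose $\Tc_{\a,\b;N,\bm\ell}\setminus\Uc_{\e,\l}$ according to $\e^{-r+1} < |\bm\mu| \le \e^{-r}$ for $r \ge 1$ (with the $r=1$ shell really being $\e^\lambda < |\bm\mu| \le \e^{-1}$): the $r$-th shell contains $O(\e^{-Nr})$ lattice points, each contributing $\ll \poly(\e^{-1}) \, e^{-\frac14 \e^{3-2r}}$ (for $r=1$, $e^{-\frac14\e^{2\lambda+1}}$), and for $r \ge 2$ the super-exponential decay $e^{-\frac14\e^{3-2r}}$ dominates any polynomial in $\e^{-1}$, while for $r=1$ the factor $e^{-\frac14\e^{2\lambda+1}}$ with $2\lambda+1 < 0$ again beats $\e^{-N}$ and any fixed power; summing the geometric-type series gives $O(\e^L)$ for every $L$. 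The integral estimate is handled identically: $\int_{|\bm\mu| \in (\e^{-r+1}, \e^{-r}]} |P(\bm u)| e^{-\frac14|\bm u|^2} d\bm u$ has the same shell volumes up to the Jacobian $|\sqrt z|^N \asymp \e^{N/2}$ of the change of variables $\bm\mu \mapsto \bm u$, and the Gaussian makes each such integral $O(\e^L)$ after the same dyadic summation; alternatively one bounds the integral by the corresponding Riemann sum plus a trivially small correction.

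The main obstacle I anticipate is bookkeeping the two small perturbations that separate $\bm u$ from its ``ideal'' value $\sqrt z\,\bm\mu$ — namely the shift $(1 - \frac{1}{1+iy})\frac{\log 2}{\e}\sqrt z = \frac{iy\log 2}{1+iy}\sqrt\e\sqrt{1+iy}$ of size $\asymp |y|\sqrt\e \ll \e^{3/2+\lambda+\delta}$, and the imaginary-part contribution $-|\im \bm u|^2$ to $\re(\bm u^T\bm u)$ — and making sure that on the innermost nontrivial shell $r=1$, where $|\bm u|$ can be as small as $\asymp \e^{\lambda+\frac12}$ (so $|\bm u|^2 \asymp \e^{2\lambda+1} \to \infty$ since $2\lambda+1<0$, but only polynomially-ish), the Gaussian decay still genuinely beats every power $\e^{-L}$; this is true because $2\lambda + 1 < 0$ strictly, so $e^{-c\e^{2\lambda+1}}$ decays faster than any power of $\e$. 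Everything else is a routine adaptation of the dyadic argument already carried out in \Cref{prp:ore}, so I would state the shell decomposition, quote the point-count and volume bounds, invoke the Gaussian domination inequality, and conclude. $\qed$
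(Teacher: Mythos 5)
Your plan follows the same route as the paper's proof: write $u_j=\mu_j\sqrt z+\frac{\log(2)\sqrt z}{\e}\rb{1-\frac{1}{1+iy}}$, use $y\ll\e^{1+\l+\d}$ to show that the shift and the imaginary parts are harmless, deduce that $\re\rb{\bm u^T\bm u}\gg\e|\bm\mu|^2\ge\e^{1+2\l}$ off $\Uc_{\e,\l}$ (which beats every power of $\e$ since $1+2\l<0$), and then sum, respectively integrate, against the Gaussian. The paper does this without your explicit dyadic shells, simply noting that $P$ has polynomial growth, but that difference is cosmetic.

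However, two of your quantitative claims are off, and the first, taken literally, fails exactly on the critical innermost shell. You assert $\re(\bm u^T\bm u)\ge\frac12|\bm u|^2-O(\e^{-2}|y|)$ with the error ``$o(1)$ or at worst subpolynomial''; with $|y|\ll\e^{1+\l+\d}$ this error is $O(\e^{-1+\l+\d})$ (even the more favorable $\e^{-1}|y|\ll\e^{\l+\d}$), which is polynomially large and, since $-1+\l+\d<1+2\l$ for $\l>-\frac23$, dominates the smallest admissible main term $|\bm u|^2\asymp\e^{1+2\l}$ occurring when $|\bm\mu|\asymp\e^\l$; so Gaussian domination does not follow from that inequality there. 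Likewise the shift of $u_j$ has modulus $\asymp|y|\e^{-\frac12}\ll\e^{\frac12+\l+\d}$, not $|y|\sqrt\e\ll\e^{\frac32+\l+\d}$ (you dropped a factor $\e^{-1}$). The repair is exactly the bookkeeping the paper carries out: either note that $\im(u_j)\ll\e^\d\re(u_j)$ for every coordinate with $\mu_j\gg\e^\l$, or compute directly from $u_j=\sqrt z\rb{\mu_j+\frac{\log(2)}{\e}\frac{iy}{1+iy}}$ that $\re\rb{u_j^2}=\e\mu_j^2-\frac{\log(2)^2y^2}{\e(1+y^2)}$, the cross term having vanishing real part; the loss per coordinate is thus $\ll y^2/\e\ll\e^{1+2\l+2\d}$, which is $o\rb{\e^{1+2\l}}$ and hence negligible against $\e|\bm\mu|^2$ whenever $|\bm\mu|>\e^\l$. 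With this correction your shell argument, and the identical treatment of the integral, goes through and coincides with the paper's proof of \Cref{lem:uoor}.
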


\begin{proof}
	Let $\bm\mu \in \R^N$ be such that $n_j = \frac{\log(2)}{\varepsilon}+\mu_j$ for $1\le j \le N$. We may rewrite
	\begin{equation}\label{eq:uje} 
		u_j = \frac{\log(2)\sqrt{z}}{\varepsilon} \rb{1-\frac{1}{1+iy}} + \mu_j \sqrt{z}. 
	\end{equation}
	For $y\ll \varepsilon^{1+\lambda+\delta}$, the first term in \eqref{eq:uje} has size $\ll\varepsilon^{\frac 12+\lambda+\delta}$. This implies that
	\[
		\im(u_j) \ll \varepsilon^{\frac 12+\lambda+\delta} + \varepsilon^{\frac 32+\lambda+\delta}\mu_j.
	\]
	Meanwhile, if $\mu_j\gg\e^\l$, then we have $\re(u_j)\gg\e^\frac12\mu_j$. In particular, this implies that $\im(u_j)\ll\e^\d\re(u_j)$. As $\bm n\notin\Nc_{\e,\l}$, we have $|\bm\mu|>\e^\l$, and in particular there is some $j$ such that $\mu_j \gg\e^\l$ for some $j$. Thus $-\bm u^T\bm u$ has negative real part of size $\gg\e^{1+2\l}$ {as $\varepsilon \to 0$}. As $1+2\l<0$, this implies 
	\[
		\sum_{\bm u\in\Tc_{\a,\b;N,\bm\ell}\bs\Uc_{\e,\l}} \vb{P(\bm u)e^{-\bm u^T\bm u}} = O\left(\e^L\right), \quad\text{and}\quad \int_{\left(-\frac{\log(2)}{\sqrt z}+\R\sqrt z\right)^N\big\bs\Uc_{\e,\l}} \vb{P(\bm u)e^{-\bm u^T\bm u}}\bm{du} = O\left(\e^L\right)
	\]
	for all $L\in\N$, noting that $P(\bm u)$ only has polynomial growth.
\end{proof}

Now we prove a bound on the size of the exponential factors appearing in \Cref{prp:sanr}.
\begin{lem}\label{lem:eb} 
	If $y\ll\e^{1+\l+\d}$ for some $\d>0$, and $\bm u\in\Uc_{\e,\l}$, then we have, as $z\to0$,
	\[
		\vb{e^\frac{\pi^2N}{12z}e^{-\bm u^T\bm u}} \le e^\frac{\pi^2N}{12\e}.
	\]
\end{lem}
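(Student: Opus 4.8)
\textbf{Proof plan for Lemma \ref{lem:eb}.} The plan is to take absolute values, reduce to an exact computation of $\re(\bm u^T\bm u)$, and then invoke one elementary numerical inequality. Writing $z=\e(1+iy)$, we have $\re(\frac1z)=\frac{1}{\e(1+y^2)}$, so that $\vb{e^{\frac{\pi^2N}{12z}}}=\exp(\frac{\pi^2N}{12\e(1+y^2)})$, while $\vb{e^{-\bm u^T\bm u}}=e^{-\re(\bm u^T\bm u)}$. Hence the claim is equivalent to the real inequality
\[
	\frac{\pi^2N}{12\e(1+y^2)} - \re\rb{\bm u^T\bm u} \le \frac{\pi^2N}{12\e}.
\]

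To compute $\re(\bm u^T\bm u)$ exactly, I would use the description of $u_j$ already recorded in \eqref{eq:uje}: for $\bm u\in\Uc_{\e,\l}$, writing $n_j=\frac{\log(2)}{\e}+\mu_j$ with $\mu_j\in\R$, one has $u_j=\frac{iy\log(2)}{\sqrt z}+\mu_j\sqrt z$ (using $\e(1+iy)=z$ to simplify \eqref{eq:uje}). Squaring and summing over $j$ gives
\[
	\bm u^T\bm u = -\frac{Ny^2\log(2)^2}{z} + 2iy\log(2)\sum_{j=1}^N\mu_j + z|\bm\mu|^2.
\]
The middle term is purely imaginary, and $\re(z)=\e$, so taking real parts yields the exact formula $\re(\bm u^T\bm u)=-\frac{Ny^2\log(2)^2}{\e(1+y^2)}+\e|\bm\mu|^2$.

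Substituting this into the displayed inequality and discarding the harmless contribution $-\e|\bm\mu|^2\le0$, it remains to check
\[
	\frac{\pi^2N}{12\e(1+y^2)} + \frac{Ny^2\log(2)^2}{\e(1+y^2)} \le \frac{\pi^2N}{12\e};
\]
clearing the common factor $\frac{N}{\e(1+y^2)}$, this is exactly $(\log2)^2\le\frac{\pi^2}{12}$, which holds since $12(\log2)^2\approx5.77<\pi^2$. The computation is completely routine; the only care needed is in getting the formula for $u_j$ right and noticing that the cross term contributes nothing to the real part. In fact the hypotheses $y\ll\e^{1+\l+\d}$ and $\bm u\in\Uc_{\e,\l}$ are barely used — only to guarantee that $\bm u$ arises from an admissible integer vector with real $\bm\mu$ — and the slack in the final inequality is precisely $\frac{\pi^2}{12}-(\log2)^2>0$.
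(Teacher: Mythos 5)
Your proof is correct, and it is genuinely different from (and in fact sharper than) the argument in the paper. The paper proceeds asymptotically: it writes $\sqrt z=\e_0(1+iy_0)$, expands $\im(u_j)^2$ with $O(y_0^3)$ errors, uses $\bm u\in\Uc_{\e,\l}$ to write $n_j=\frac1\e(\log(2)+O(\e^{1+\l}))$ and the hypothesis $y\ll\e^{1+\l+\d}$ to control the error terms, discards $-\re(u_j)^2\le0$, and concludes only for $\e$ sufficiently small from the numerical fact $\frac34\log(2)^2<\frac{\pi^2}{12}$. You instead compute $\re(\bm u^T\bm u)$ exactly: from $u_j=\frac{iy\log(2)}{\sqrt z}+\mu_j\sqrt z$ one gets $u_j^2=-\frac{y^2\log(2)^2}{z}+2iy\log(2)\mu_j+\mu_j^2z$, whose middle term is purely imaginary, so $\re(\bm u^T\bm u)=-\frac{Ny^2\log(2)^2}{\e(1+y^2)}+\e|\bm\mu|^2$, and after dropping $-\e|\bm\mu|^2\le0$ the claim reduces (for $y\ne0$, after cancelling $y^2$) to $\log(2)^2\le\frac{\pi^2}{12}$, which holds. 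This buys an inequality valid for every $\e>0$ and every $y\in\R$, with no smallness assumption on $z$ and no use of the hypotheses beyond the realness of $\bm\mu$ (i.e., that $\bm u$ comes from an integer vector $\bm n$), whereas the paper's route only yields the bound for $\e$ small and under $y\ll\e^{1+\l+\d}$; the trade-off is that the paper's Taylor-expansion machinery is the same toolkit it reuses elsewhere, while your computation is a one-off exact identity. The only cosmetic imprecision is that the final reduction is $y^2\log(2)^2\le\frac{\pi^2}{12}y^2$, which is the stated numerical inequality only after dividing by $y^2$ (and is trivial when $y=0$); this does not affect correctness.
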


\begin{proof}
	We first estimate the real and the imaginary parts of $\sqrt z$. Writing $\sqrt z:=\e_0(1+iy_0)$, we have $\e=\e_0^2(1-y_0^2)$ and $y=\frac{2y_0}{1-y_0^2}$. Since $\e>0$, we have $1-y_0^2>0$, thus $|y_0|<1$. So $0<1-y_0^2<1$, and $y=\frac{2y_0}{1-y_0^2}\ge2y_0$. Since $y\ll\e^{1+\l+\d}$, we have $1-y_0^2=1+O(\e^{2+2\l+2\d})$, and $\e_0=\e^\frac12(1+O(\e^{2+2\l+2\d}))$.
	
	We next bound $e^{\re(-\bm u^T\bm u)}$. For this, we compute
	\begin{equation}\label{eq:real}
		\re\left(-\bm u^T\bm u\right) = \sum_{j=1}^N \re\left(-u_j^2\right) = \sum_{j=1}^N \left(\im(u_j)^2-\re(u_j)^2\right).
	\end{equation}
	Since
	\[
		u_j = -\frac{\log(2)}{\e_0}\left(1-iy_0-y_0^2+O\left(y_0^3\right)\right) + n_j\e_0(1+iy_0),
	\]
	it follows that
	\begin{align}\nonumber
		\im(u_j) &= \frac{\log(2)}{\e_0}\left(y_0+O\left(y_0^3\right)\right) + n_j\e_0y_0,\\
		\label{eq:imuj2e}
		\im(u_j)^2 &= \frac{\log(2)^2}{\e_0^2}\left(y_0^2+O\left(y_0^3\right)\right) + n_j\log(2)\left(y_0^2+O\left(y_0^3\right)\right) + n_j^2\e_0^2y_0^2.
	\end{align}
	Since $\bm u\in\Uc_{\e,\l}$, we deduce that $n_j=\frac1\e(\log(2)+O(\e^{1+\l}))$. Plugging this into \eqref{eq:imuj2e}, we get
	\begin{align*}
		\im(u_j)^2 &= \frac{\log(2)^2}{\e_0^2}\left(y_0^2+O\left(y_0^3\right)\right) + \frac1\e\left(\log(2)+O\left(\e^{1+\l}\right)\right)\log(2)\left(y_0^2+O\left(y_0^3\right)\right)\\
		&\hspace{8cm}+ \frac{1}{\e^2}\left(\log(2)+O\left(\e^{1+\l}\right)\right)^2\e_0^2y_0^2.
	\end{align*}
	Using $\e=\e_0^2(1-y_0^2)$, $(1-y_0^2)^{-1}=1+y_0^2+O(y_0^4)$, and $y_0 \ll \e^{1+\l+\d}$, we may expand
	\begin{align*}
		\im(u_j)^2 &= \frac{\log(2)^2\left(1-y_0^2\right)}{\e}\left(y_0^2+O\left(y_0^3\right)\right) + \frac1\e\left(\log(2)+O\left(\e^{1+\l}\right)\right)\log(2)\left(y_0^2+O\left(y_0^3\right)\right)\\
		&\hspace{6cm} +\frac1\e\left(\log(2)+O\left(\e^{1+\l}\right)\right)^2y_0^2\left(1+y_0^2+O\left(y_0^4\right)\right)\\
		&= \frac{\log(2)^2y_0^2}{\e}(1+O(y_0)) + \frac{\log(2)^2y_0^2}{\e}\left(1+O\left(\e^{1+\l}\right)\right) + \frac{\log(2)^2y_0^2}{\e}\left(1+O\left(\e^{1+\l}\right)\right)\\
		&= \frac{y_0^2}{\e}\left(3\log(2)^2+O\left(\e^{1+\l}\right)\right) \le \frac{y^2}{\e}\left(\frac34\log(2)^2+O\left(\e^{1+\l}\right)\right).
	\end{align*}
It follows by \eqref{eq:real} that
	\begin{equation}\label{eq:real2}
		\re\left(-\bm u^T\bm u\right) \le \frac{Ny^2}{\e}\left(\frac34\log(2)^2+O\left(\e^{1+\l}\right)\right).
	\end{equation}

On the other hand, we have
	\[
		\re\left(\frac{\pi^2N}{12z}\right) = \frac{\pi^2N}{12\e}\re\left(\frac{1}{1+iy}\right) = \frac{\pi^2N}{12\e}\left(1-y^2+O\left(y^4\right)\right).
	\]
	Combining this with \eqref{eq:real2} we obtain, for $\e>0$ sufficiently small
	\[
		\re\left(\frac{\pi^2N}{12z}\right) + \re\left(-\bm u^T\bm u\right) = \frac N\e\left(\frac{\pi^2}{12} + y^2\left(\frac34\log(2)^2 - \frac{\pi^2}{12} + O\left(\e^{1+\l}\right)\right) + O\left(y^4\right)\right) \le \frac{\pi^2N}{12\e}. \qedhere
	\]
\end{proof}

We next show the following lemma.

\begin{lem}\label{lem:gK1sa}
	If $y\ll\e^{1+\l+\d}$ for some $\d>0$, then for $R\in\N$ we have, as $z\to 0$,
	\[
		g_{\a,\b;N,\bm\ell}^{[K,1]}(z) = \left(\frac z\pi\right)^\frac N2\frac{e^\frac{\pi^2N}{12z}}{2^{K+\frac12}}\sum_{r=0}^{R-1} \sum_{\bm u\in\Tc_{\a,\b;N,\bm\ell}\cap\Uc_{\e,\l}} e^{-\bm u^T\bm u} C_r(\bm u)z^\frac r2 + O\rb{\varepsilon^{N\rb{\lambda+\frac 12}+3R\rb{\lambda+\frac 23}}e^\frac{\pi^2N}{12\varepsilon}}.
	\]
\end{lem}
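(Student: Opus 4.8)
The plan is to feed the single-summand asymptotic expansion of \Cref{prp:sanr} into the finite sum defining $g_{\a,\b;N,\bm\ell}^{[K,1]}(z)$ and then control the accumulated error. First I would check that the hypothesis here is at least as strong as the one in \Cref{prp:sanr}: since $-\frac23<\l<-\frac12$ we may write $1+\l+\d=\frac13+\d'$ with $\d':=\frac23+\l+\d>0$, so $y\ll\e^{1+\l+\d}$ implies $y\ll\e^{\frac13+\d'}$ and \Cref{prp:sanr} applies; moreover the parameter appearing there is $\d_1=\min\{\d',\frac23+\l\}=\l+\frac23$. Hence, for every $\bm n\in\Nc_{\e,\l}$, writing $\bm u$ for the vector attached to $\bm n$ via \eqref{eq:ujd}, \Cref{prp:sanr} yields
\[
	\frac{e^{-H(\bm n)z}}{\prod_{j=1}^N (q;q)_{n_j}} = \leg z\pi^\frac N2 \frac{e^\frac{\pi^2N}{12z}e^{-\bm u^T\bm u}}{2^{K+\frac12}} \sum_{r=0}^{R-1} C_r(\bm u)z^\frac r2 + O\rb{\e^{3R\rb{\l+\frac23}}}\, z^\frac N2 e^\frac{\pi^2N}{12z} e^{-\bm u^T\bm u},
\]
with implied constant uniform in $\bm u$.

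Next I would sum this identity over $\bm n\in\Sc_{\a,\b;N,\bm\ell}\cap\Nc_{\e,\l}$, equivalently over $\bm u\in\Tc_{\a,\b;N,\bm\ell}\cap\Uc_{\e,\l}$. The main terms assemble into exactly the main term asserted in the lemma, so the task reduces to bounding the sum of the single-summand error terms, $O(\e^{3R(\l+2/3)})\,z^\frac N2 e^\frac{\pi^2N}{12z}\sum_{\bm u}e^{-\bm u^T\bm u}$. By the triangle inequality and uniformity this is $\ll\e^{3R(\l+2/3)}\sum_{\bm u}|z|^{N/2}\,\big|e^\frac{\pi^2N}{12z}e^{-\bm u^T\bm u}\big|$, and I would control the factors with three elementary inputs: $|z|=\e\sqrt{1+y^2}\asymp\e$; \Cref{lem:eb}, which gives $\big|e^\frac{\pi^2N}{12z}e^{-\bm u^T\bm u}\big|\le e^\frac{\pi^2N}{12\e}$ uniformly for $\bm u\in\Uc_{\e,\l}$; and the point count $|\Tc_{\a,\b;N,\bm\ell}\cap\Uc_{\e,\l}|\le|\Nc_{\e,\l}|\ll\e^{N\l}$, valid because $\Nc_{\e,\l}$ consists of the integer points within Euclidean distance $\e^\l$ of $\frac{\log(2)}{\e}\bm1$ and $\l<0$. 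Multiplying, the total error is $\ll\e^{3R(\l+2/3)}\cdot\e^{N/2}\cdot e^\frac{\pi^2N}{12\e}\cdot\e^{N\l}=\e^{N(\l+\frac12)+3R(\l+\frac23)}e^\frac{\pi^2N}{12\e}$, which is precisely the claimed error.

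The step demanding real care is the error accumulation: to replace a sum of $O(\e^{N\l})$ single-summand error terms by $\e^{N\l}$ times one common bound, I must use that the implied constant in \Cref{prp:sanr} is genuinely independent of $\bm u$ (the ``uniformly in $\bm u$'' clause there) and that every $\bm u$ in the range of summation lies in $\Uc_{\e,\l}$, so that \Cref{lem:eb} applies to all of them. Apart from this bookkeeping and the (standard) lattice-point estimate, the argument is a direct computation; no genuinely new idea is required beyond \Cref{prp:sanr} and \Cref{lem:eb}.
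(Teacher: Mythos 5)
Your proposal is correct and follows essentially the same route as the paper: sum the uniform single-summand expansion of \Cref{prp:sanr} (with $\d_1=\l+\tfrac23$, exactly as the paper notes in a footnote) over $\bm u\in\Tc_{\a,\b;N,\bm\ell}\cap\Uc_{\e,\l}$, then bound the accumulated error by the lattice-point count $\ll\e^{N\l}$ together with \Cref{lem:eb}. The only difference is that you spell out the hypothesis check and the point count in slightly more detail than the paper does.
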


\begin{proof}
	Summing the asymptotics in \Cref{prp:sanr}, we obtain\footnote{Note that $\delta_1$ in \Cref{prp:sanr} takes the value $\delta_1 = \lambda+\frac 23$. The error in \eqref{eq:gK1_error1} makes sense, because the error in \Cref{prp:sanr} is uniform in $\bm u$.}
	\begin{multline}\label{eq:gK1_error1}
		g_{\a,\b;N,\bm\ell}^{[K,1]}(z) = \left(\frac z\pi\right)^\frac N2\frac{e^\frac{\pi^2N}{12z}}{2^{K+\frac12}}\sum_{r=0}^{R-1} \sum_{\bm u \in \mc T_{\alpha,\beta;N,\bm\ell}\cap \mc U_{\varepsilon,\lambda}} e^{-\bm u^T\bm u} C_r(\bm u)z^\frac r2\\
		+ z^\frac N2e^\frac{\pi^2N}{12z}O\rb{\varepsilon^{3R\rb{\lambda+\frac 23}}} \sum_{\bm u \in \mc T_{\alpha,\beta;N,\bm\ell}\cap \mc U_{\varepsilon,\lambda}} e^{-\bm u^T\bm u}.
	\end{multline}
	Since the summation over $\Tc_{\a,\b;N,\bm\ell}\cap\Uc_{\e,\l}$ contains $\ll\e^{N\l}$ terms, it follows from \Cref{lem:eb} that the second term on the right-hand side of \eqref{eq:gK1_error1} is bounded as claimed.
\end{proof}

For convenience, we set for $j\in\N_0$
\begin{equation*}
	G_j(\bm u) := e^{\frac{\pi^2N}{12z}} C_j(\bm u) e^{-\bm u^T\bm u},
\end{equation*} 
where the polynomial $C_j(\bm u)$ is defined in \eqref{eq:Crud}; in particular, we have $\deg(C_j)\le3j$ (see the proof of \Cref{prp:sanr}). We next estimate $\sum_{\bm u\in\Tc_{\a,\b;N,\bm\ell}\cap\Uc_{\e,\l}}G_j(\bm u)$.

\begin{lem}\label{lem:Gjua}
	If $y\ll \varepsilon^{1+\lambda+\delta}$ for some $\delta > 0$, then, for $R\in\N$, we have, as $z\to 0$,
	\begin{equation*}
		\sum_{\bm u\in\Tc_{\a,\b;N,\bm\ell}\cap\Uc_{\e,\l}} G_j(\bm u) = e^\frac{\pi^2N}{12z}\sum_{r=-N}^{R-1} V_{j,r}z^\frac r2 + O\left(\e^{\left(\l+\frac12\right)(3j+2N+R-1)+\frac{N+R}{2}-1}e^\frac{\pi^2N}{12\e}\right).
	\end{equation*}
\end{lem}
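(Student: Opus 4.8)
The plan is to view $\sum_{\bm u\in\Tc_{\a,\b;N,\bm\ell}\cap\Uc_{\e,\l}}G_j(\bm u)$ as a Riemann-type sum, over a shifted sublattice of spacing $N\sqrt z$ in each coordinate, of the function $\bm u\mapsto G_j(\bm u)=e^{\pi^2N/(12z)}C_j(\bm u)e^{-\bm u^T\bm u}$, which is $e^{\pi^2N/(12z)}$ times a ($z$-free) polynomial times a Gaussian, and to evaluate it by iterated Euler--Maclaurin summation through \Cref{prp:EMF}. The first move is to remove the restriction to $\Uc_{\e,\l}$: by \Cref{lem:uoor} the sum of $\lvert C_j(\bm u)e^{-\bm u^T\bm u}\rvert$ over $\Tc_{\a,\b;N,\bm\ell}\sm\Uc_{\e,\l}$ is $O(\e^L)$ for every $L$, and since $\re(1/z)\le1/\e$ forces $\lvert e^{\pi^2N/(12z)}\rvert\le e^{\pi^2N/(12\e)}$, this tail contributes only $O(\e^Le^{\pi^2N/(12\e)})$, absorbed into the claimed error. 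It then suffices to expand the unrestricted lattice sum $\sum_{\bm u\in\Tc_{\a,\b;N,\bm\ell}}G_j(\bm u)$; here the constraints $n_c\ge0$ and $n_\a>n_\b$ only serve to fix the lower endpoints of the Euler--Maclaurin sums.

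I would then run Euler--Maclaurin in two stages. Stage one: for fixed $\bm{u_{[1]}}$, sum $G_j(\bm{u_{[1]}},u_\a)$ over $u_\a\in\mf u_\alpha(\bm{u_{[1]}})$, an arithmetic progression of step $N\sqrt z$ with smallest element $u_\b+[\ell_\a-\ell_\b]_N\sqrt z$. As $u_\a\mapsto G_j(\bm{u_{[1]}},u_\a)$ is entire with Gaussian decay along the ray (because $\re(z)>0$), the rapid-decay form of \Cref{prp:EMF} applies; Taylor-expanding the resulting boundary derivatives at $u_\b+[\ell_\a-\ell_\b]_N\sqrt z$ about $u_\b$ and shifting the integral's lower limit to $u_\b$ yields exactly the decomposition defining the $W_r$, namely $\frac1{N\sqrt z}\int_{u_\b}^{u_\b+\sqrt z\infty}G_j(\bm{u_{[1]}},u_\a)\,du_\a+\sum_{r\ge0}W_rG_j^{(r)}(\bm{u_{[1]}},u_\b)z^{r/2}$ plus a controlled error. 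Stage two: sum this over $\bm{u_{[1]}}$ using \Cref{prp:EMF} in each of the remaining $N-1$ coordinates, whose lattices have step $N\sqrt z$ and lower endpoints $u_c=-\log(2)/\sqrt z+\ell_c\sqrt z$. The decisive point is a dichotomy: the $u_\a$-boundary at $u_\b$ lies near the center of the Gaussian and genuinely contributes, whereas each $u_c$-boundary sits at $u_c\approx-\log(2)/\sqrt z$ deep in the Gaussian tail, where $\re(-u_c^2)\approx-(\log2)^2/\e$, so the Euler--Maclaurin boundary terms in the $\bm{u_{[1]}}$-directions are $O(\e^Le^{\pi^2N/(12\e)})$ and may be discarded. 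What remains is $\frac1{(N\sqrt z)^N}\int_{\{u_\a>u_\b\}}G_j(\bm u)\,d\bm u+\frac1{(N\sqrt z)^{N-1}}\sum_{r\ge0}W_rz^{r/2}\int G_j^{(r)}(\bm{u_{[1]}},u_\b)\,d\bm{u_{[1]}}$ plus errors, and by the integral part of \Cref{lem:uoor} the domains of integration may be taken to be the full products of lines.

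To finish, I would evaluate these integrals. Since $C_j$ carries no $z$-dependence, each reduces, after pulling out $e^{\pi^2N/(12z)}$, to a contour Gaussian integral of a polynomial along lines in direction $\sqrt z$; completing the square shows each such one-dimensional integral equals a $z$-independent constant (for instance $\int_{-\log(2)/\sqrt z+\R\sqrt z}e^{-u^2}\,du=\sqrt\pi$), so the $N$-fold integral contributes $e^{\pi^2N/(12z)}$ times a constant times $z^{-N/2}$ and the $r$-th $W_r$-term contributes $e^{\pi^2N/(12z)}$ times a constant times $z^{(r-N+1)/2}$; the constraint $u_\a>u_\b$ merely turns some Gaussians into incomplete ones and introduces extra factors built from powers of the shift $[\ell_\a-\ell_\b]_N$. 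Regrouping the finitely many contributions by powers of $z^{1/2}$ then gives $e^{\pi^2N/(12z)}\sum_{r=-N}^{R-1}V_{j,r}z^{r/2}$ with explicit $V_{j,r}$.

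The main obstacle is the error bookkeeping. One must combine the \Cref{lem:uoor} tail, the stage-one Euler--Maclaurin error summed over the $\bm{u_{[1]}}$-lattice, the stage-two errors, and the truncation errors from both the Taylor expansion producing the $W_r$ and the series for $C_j$; each is bounded by estimating the relevant derivatives of $G_j$ through $\lvert\bm u\rvert\ll\e^{-1/6+\d_1}$ on $\Uc_{\e,\l}$ (with $\d_1=\l+\tfrac23$, so $-\tfrac16+\d_1=\l+\tfrac12$), the step-size powers $(N\sqrt z)^{2R}\asymp\e^{R}$, and the Gaussian decay, and one then checks that everything is dominated by $\e^{(\l+\frac12)(3j+2N+R-1)+(N+R)/2-1}e^{\pi^2N/(12\e)}$. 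A secondary technical issue is the legitimacy of \Cref{prp:EMF} and of the contour deformations along complex rays, which causes no trouble because $\lvert\arg\sqrt z\rvert<\pi/4$.
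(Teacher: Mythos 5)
Your proposal follows essentially the same route as the paper: remove the $\Uc_{\e,\l}$ restriction via \Cref{lem:uoor}, apply \Cref{prp:EMF} first in the $u_\a$-direction (Taylor-expanding the boundary data at $u_\b+[\ell_\a-\ell_\b]_N\sqrt z$ about $u_\b$ to produce the $W_r$-terms), then in the remaining $N-1$ directions where the boundary at $-\frac{\log(2)}{\sqrt z}+\ell_c\sqrt z$ lies in the Gaussian tail and is negligible, and finally shift contours to real lines to obtain the $z$-independent $V_{j,r}$. The argument and error bookkeeping (derivative bounds via $|\bm u|\ll\e^{\l+\frac12}$ on $\Uc_{\e,\l}$ together with \Cref{lem:eb}, step-size powers, and the measure of the $\bm{u_{[1]}}$-region) match the paper's proof, so the proposal is correct and not a genuinely different argument.
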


\begin{proof}
	For the first step, we consider the sum over $u_\a$, the $\a$-th component of $\bm u$. To emphasize this component, we write $\bm{u_{[1]}}$ to denote the remaining $N-1$ components of $\bm u$, and $\bm u = (\bm{u_{[1]}}, u_\a)$. Note that this notation does not say that $u_\a$ is the $N$-th component of $\bm u$.

	We decompose
	\begin{equation}\label{eq:sGjud} 
		\sum_{\bm u\in\Tc_{\a,\b;N,\bm\ell}\cap\Uc_{\e,\l}} G_j(\bm u) = \sum_{\substack{\bm u_{\bm{[1]}}\in\C^{N-1}\\\exists u_\a\in\C,\left(\bm u_{\bm{[1]}},u_\a\right)\in\Tc_{\a,\b;N,\bm\ell}\cap\Uc_{\e,\l}}} \sum_{\substack{u_\a\in\mf u_\a\left(\bm{u_{[1]}}\right)\\\left(\bm u_{\bm{[1]}},u_\a\right)\in\Uc_{\e,\l}}} G_j\left(\bm{u_{[1]}},u_\a\right),
	\end{equation}
	where
	\begin{equation*}
		\mf u_\alpha\left(\bm{u_{[1]}}\right) := \cbd{u_\beta+t\sqrt{z}}{t\in {[\ell_\alpha-\ell_\beta]_N}+N\N_0}. 
	\end{equation*}
	If $\bm{u_{[1]}}$ is such that there exists $u_\alpha \in \C$ with $(\bm{u_{[1]}},u_\a) \in \mc T_{\a,\b;N,\bm\ell} \cap \mc U_{\varepsilon,\lambda}$, then we have
	\ba
		\mf u_\alpha\rb{\bm{u_{[1]}}} = \cb{u_\alpha\in\C : \rb{\bm u_{\bm{[1]}},u_\alpha}\in\mc T_{\alpha,\beta;N,\bm\ell}},
	\ea
	so the decomposition \eqref{eq:sGjud} makes sense. We evaluate the sum variable by variable, starting with the innermost sum over $u_\a$. By \Cref{lem:uoor}, we may extend the inner sum in \eqref{eq:sGjud} by removing the condition $(\bm{u_{[1]}},u_\a)\in\Uc_{\e,\l}$. This introduces an error term of size $O(\e^L)e^\frac{\pi^2N}{12z}$ for all $L\in\N$, which is negligible. By \Cref{prp:EMF} with $a=u_\b+[\ell_\a-\ell_\b]_N\sqrt z$, $z\mapsto N\sqrt z$, we obtain
	\begin{align}\nonumber 
		\sum_{u_\a\in\mf u_\a\left(\bm{u_{[1]}}\right)} G_j\left(\bm{u_{[1]}},u_\a\right) &= \frac{1}{N\sqrt z}\int_{u_\b+[\ell_\a-\ell_\b]_N\sqrt z}^{u_\b+\sqrt z\infty} G_j\left(\bm{u_{[1]}},u_\a\right) du_\a + \frac12G_j\left(\bm{u_{[1]}},u_\b+[\ell_\a-\ell_\b]_N\sqrt z\right)\\
		\nonumber
		&\hspace{1.5cm}- \sum_{r=1}^R \frac{B_{2r}N^{2r-1}z^{r-\frac12}}{(2r)!}G_j^{(2r-1)}\left(\bm{u_{[1]}},u_\b+[\ell_\a-\ell_\b]_N\sqrt z\right)\\
		\label{eq:s1GjEMFf}
		&\hspace{3cm}+ O(1)z^R\int_{u_\b+[\ell_\a-\ell_\b]_N\sqrt z}^{u_\b+\sqrt z\infty} \vb{G_j^{(2R+1)}\left(\bm{u_{[1]}},u_\a\right)} du_\a.
	\end{align}
	
	First we estimate the second integral in \eqref{eq:s1GjEMFf}. By a direct calculation, we see that, for $k\in\N$
	\ba
		G_j^{(k)} \rb{\bm{u_{[1]}}, u_\alpha} = e^{\frac{\pi^2N}{12z}} P_k(\bm u) e^{-\bm u^T\bm u},
	\ea
	where the derivative is taken with respect to $u_\a$ and where $P_k(\bm u)$ is a polynomial of degree at most $3j+k$. By \Cref{lem:uoor}, we may restrict the second integral in \eqref{eq:s1GjEMFf} to $\Uc_{\e,\l}$, introducing an error of size $O(\e^L)e^\frac{\pi^2N}{12z}$ for all $L\in\N$. Now we consider the part of the integral over $\Uc_{\e,\l}$, which has measure $\ll\e^{\l+\frac12}$. For $\bm u\in\Uc_{\e,\l}$, we use $|\bm u|\ll\e^{\l+\frac12}$ and \Cref{lem:eb} to conclude that
	\begin{equation}\label{eq:s1Gjkd} 
		G_j^{(k)} \rb{\bm{u_{[1]}}, u_\alpha} \ll \varepsilon^{\rb{\lambda+\frac 12}\rb{3j+k}}e^{\frac{\pi^2N}{12\varepsilon}}.
	\end{equation}
	Hence
	\begin{equation}\label{eq:s1Gjeb} 
		\int_{u_\b+[\ell_\a-\ell_\b]_N\sqrt z}^{u_\b+\sqrt z\infty} \vb{G_j^{(2R+1)}\left(\bm{u_{[1]}},u_\a\right)} du_\a \ll \e^{\left(\l+\frac12\right)(3j+2R+2)}e^\frac{\pi^2N}{12\e}.
	\end{equation}
	
	Next we consider the first integral in \eqref{eq:s1GjEMFf}. We split 
	\begin{equation*}
		\int\limits_{u_\b+[\ell_\a-\ell_\b]_N\sqrt z}^{u_\b+\sqrt z\infty}\hspace{-.1cm} G_j\left(\bm{u_{[1]}},u_\a\right) du_\a = \hspace{-.1cm}\int\limits_{u_\b}^{u_\b+\sqrt z\infty}\hspace{-.1cm} G_j\left(\bm{u_{[1]}},u_\a\right) du_\a - \hspace{-.1cm}\int\limits_{u_\b}^{u_\b+[\ell_\a-\ell_\b]_N\sqrt z}\hspace{-.1cm} G_j\left(\bm{u_{[1]}},u_\a\right) du_\a.
	\end{equation*}
	We keep the first integral on the right-hand side. For the second integral, we apply \Cref{prp:EMF} (with $a=u_\b$, $z\mapsto[\ell_\a-\ell_\b]_N\sqrt z$) to obtain
	\begin{multline}\label{eq:s1GjrEMF} 
		\int_{u_\beta}^{u_\beta+[\ell_\alpha-\ell_\beta]_N\sqrt{z}} G_j\rb{\bm u_{\bm{[1]}},u_\alpha} du_\alpha = \frac{[\ell_\alpha-\ell_\beta]_N\sqrt{z}}{2} \rb{G_j\rb{\bm u_{\bm{[1]}},u_\beta+[\ell_\alpha-\ell_\beta]_N\sqrt{z}} + G_j\rb{\bm u_{\bm{[1]}},u_\beta}}\\
		- \sum_{r=1}^R \frac{B_{2r} [\ell_\alpha-\ell_\beta]_N^{2r} z^r}{(2r)!} \rb{G_j^{(2r-1)}\rb{\bm u_{\bm{[1]}},u_\beta+[\ell_\alpha-\ell_\beta]_N\sqrt{z}} - G_j^{(2r-1)}\rb{\bm u_{\bm{[1]}},u_\beta}}\\
		+ O(1)z^{R+\frac 12} \int_{u_\beta}^{u_\beta+[\ell_\alpha-\ell_\beta]_N\sqrt{z}} \vb{G_j^{(2R+1)}\rb{\bm u_{\bm{[1]}},u_\alpha}} du_\alpha.
	\end{multline}
	Using \eqref{eq:s1Gjkd}, we conclude
	\begin{equation}\label{eq:s1Gjreb} 
		\int_{u_\beta}^{u_\beta+[\ell_\alpha-\ell_\beta]_N\sqrt{z}} \vb{G_j^{(2R+1)}\rb{\bm u_{\bm{[1]}},u_\alpha}} \ll \varepsilon^{\rb{\lambda+\frac 12}\rb{3j+2R+1} + \frac 12}e^{\frac{\pi^2N}{12\varepsilon}}. 
	\end{equation}
	It remains to consider terms of the form $G_j^{(k)}(\bm u_{\bm{[1]}},u_\b+[\ell_\a-\ell_\b]_N\sqrt z)$ appearing in \eqref{eq:s1GjEMFf} and \eqref{eq:s1GjrEMF}. We apply Taylor's Theorem and rewrite
	\begin{multline}\label{eq:s1GjTs} 
		G_j^{(k)}\rb{\bm u_{\bm{[1]}},u_\beta+[\ell_\alpha-\ell_\beta]_N\sqrt{z}} = \sum_{r=0}^{R-1} \frac{[\ell_\alpha-\ell_\beta]_N^r z^{\frac r2}}{r!} G_j^{(k+r)}\rb{\bm u_{\bm{[1]}},u_\beta}\\
		+ \frac{1}{(R-1)!} \int_{u_\beta}^{u_\beta+[\ell_\alpha-\ell_\beta]_N\sqrt{z}} G_j^{(k+R)}\rb{\bm u_{\bm{[1]}},u_\alpha} (u_\alpha-u_\beta)^{R-1} du_\alpha.
	\end{multline}
	Using \eqref{eq:s1Gjkd}, we conclude that
	\begin{equation}\label{eq:s1GjTe} 
		\int_{u_\beta}^{u_\beta+[\ell_\alpha-\ell_\beta]_N\sqrt{z}} G_j^{(k+R)}\rb{\bm u_{\bm{[1]}},u_\alpha} (u_\alpha-u_\beta)^{R-1} du_\alpha \ll \varepsilon^{\rb{\lambda+\frac 12}\rb{3j+k+R} + \frac{R}2}e^{\frac{\pi^2N}{12\varepsilon}}.
	\end{equation}
	Combining \eqref{eq:s1GjEMFf}, \eqref{eq:s1GjrEMF}, \eqref{eq:s1GjTs}, and applying the error estimates \eqref{eq:s1Gjeb}, \eqref{eq:s1Gjreb}, and \eqref{eq:s1GjTe}, we write
	\begin{multline}\label{eq:s1Gja} 
		\sum_{u_\a\in\mf u_\a\left(\bm{u_{[1]}}\right)} G_j\left(\bm{u_{[1]}},u_\a\right) = \frac{1}{N\sqrt z}\int_{u_\b}^{u_\b+\sqrt z\infty} G_j\left(\bm{u_{[1]}},u_\a\right) du_\a + \sum_{r=0}^{R-1} W_r G_j^{(r)}\left(\bm{u_{[1]}},u_\b\right)z^\frac r2\\
		+ O\left(\e^{\left(\l+\frac12\right)(3j+R+1)+\frac{R-1}{2}}e^\frac{\pi^2N}{12\e}\right),
	\end{multline}
	where
	\begin{align}\label{eq:W0} 
	W_0 &= \frac 12 - \frac{[\ell_\a-\ell_\b]_N}N,\\
	W_r &= \rb{\frac 12-\frac{[\ell_\a-\ell_\b]_N}{2N}}\frac{[\ell_\a-\ell_\b]_N^r}{r!} + \sum_{t=1}^{\lceil\frac r2\rceil} \frac{B_{2t}\left([\ell_\a-\ell_\b]_N^{2t} - N^{2t}\right)[\ell_\a-\ell_\b]_N^{r-2t+1}}{(2t)!(r-2t+1)!N}\nonumber\\
	&\hspace{1cm} - \d_{r\equiv 1\pmod{2}} \frac{B_{r+1}[\ell_\a-\ell_\b]_N^{r+1}}{(r+1)!N}, & &(r\ge 1).\nonumber
	\end{align}

	Next we consider the summation over the other variables. We set
	\begin{equation*}
		G_{j,\a}\left(\bm{u_{[1]}}\right) := \sum_{u_\a\in\mf u_\a\left(\bm{u_{[1]}}\right)} G_j\left(\bm{u_{[1]}},u_\a\right).
	\end{equation*}
	Let $1\le c\le N$, $c\ne\a$, and consider the summation over $u_c$. To emphasize the $c$-th and the $\a$-th entries of $\bm u$, we write $\bm{u_{[2]}}$ to denote the remaining $N-2$ components of $\bm u$, and write $\bm u=(\bm{u_{[2]}},u_c,u_\a)$. We are again abusing notation and this does not mean that $u_c,u_\a$ are the final components of $\bm u$. We write $\bm{u_{[1]}}=(\bm{u_{[2]}},u_c)$. We consider
	\[
		\sum_{\bm u\in\Tc_{\a,\b;N,\bm\ell}\cap\Uc_{\e,\l}} G_j(\bm u) = \sum_{\substack{\bm u_{\bm{[2]}}\in\C^{N-2}\\\exists u_c,u_\a\in\C,\left(\bm u_{\bm{[2]}},u_c,u_\a\right)\in\Tc_{\a,\b;N,\bm\ell}\cap\Uc_{\e,\l}}} \sum_{\substack{u_c\in\mf u_c,\ u_\a\in\mf u_\a\left(\bm{u_{[2]}},u_c\right)\\\left(\bm u_{\bm{[2]}},u_c,u_\a\right)\in\Uc_{\e,\l}}} G_j\left(\bm{u_{[1]}},u_\a\right),
	\]
	where 
	\begin{equation*}
		\mf u_c := \cbd{-\frac{\log (2)}{\sqrt{z}} + t\sqrt{z}}{t\in\ell_c+N\N_0}.
	\end{equation*}
	Analogous to the summation over $u_\a$, we may use \Cref{lem:uoor} to extend the sum by removing the condition $(\bm{u_{[2]}},u_c,u_\a)\in\Uc_{\e,\l}$, introducing an error of size $O(\e^L)e^\frac{\pi^2N}{12z}$ for all $L\in\N$, which is negligible. So we may consider instead the sum
	\[
		\sum_{\substack{u_c\in\mf u_c\\u_\a\in\mf u_\a\left(\bm{u_{[2]}},u_c\right)}} G_j\left(\bm{u_{[2]}},u_c,u_\a\right) = \sum_{u_c\in\mf u_c} G_{j,\a}\left(\bm{u_{[2]}},u_c\right).
	\]
	Again, we apply \Cref{prp:EMF}, and write
	\begin{multline}\label{eq:s2GjEMF} 
		\sum_{u_c\in\mf u_c} G_{j,\a}\left(\bm{u_{[2]}},u_c\right) = \frac{1}{N\sqrt z}\int_{-\frac{\log(2)}{\sqrt z}+\ell_c\sqrt z}^{-\frac{\log(2)}{\sqrt z}+\sqrt z\infty} G_{j,\a}\left(\bm{u_{[2]}},u_c\right) du_c + \frac12G_{j,\a}\left(\bm{u_{[2]}},-\frac{\log(2)}{\sqrt z}+\ell_c\sqrt z\right)\\
		- \sum_{r=1}^R \frac{B_{2r}N^{2r-1}z^{r-\frac12}}{(2r)!}G_{j,\a}^{(2r-1)}\left(\bm{u_{[2]}},-\frac{\log(2)}{\sqrt z}+\ell_c\sqrt z\right)\\
		+ O(1)z^R\int_{-\frac{\log(2)}{\sqrt z}+\ell_c\sqrt z}^{-\frac{\log(2)}{\sqrt z}+\sqrt z\infty} \vb{G_{j,\a}^{(2R+1)}\left(\bm{u_{[2]}},u_c\right)} du_c.
	\end{multline}
Thanks to the exponential decay of $e^{-\bm u^T\bm u}$, we have, as $z\to0$ for all $r\in\N_0$ and $L\in\N$,
	\begin{equation}\label{eq:s2Gjs}
		\scalebox{.97}{$\displaystyle G_{j,\a}^{(r)}\left(\bm{u_{[2]}},-\frac{\log(2)}{\sqrt z}+\ell_c\sqrt z\right) = O\left(\e^L\right)e^\frac{\pi^2N}{12z},\ \int_{-\frac{\log(2)}{\sqrt z}-\sqrt z\infty}^{-\frac{\log(2)}{\sqrt z}+\ell_c\sqrt z} G_{j,\a} \left(\bm{u_{[2]}},u_c\right) du_c = O\left(\e^L\right)e^\frac{\pi^2N}{12z}.$}
	\end{equation}
	Therefore, the lower boundary of the first integral in \eqref{eq:s2GjEMF} can be extended to $-\frac{\log(2)}{\sqrt z}-\sqrt z\infty$, and all the terms except for the main term can be ignored introducing an error term of size $O(\e^L)e^\frac{\pi^2N}{12z}$ for all $L\in\N$. Meanwhile, the error term in \eqref{eq:s2GjEMF} has size
	\begin{equation}\label{eq:s2Gjeb} 
		z^R\int_{-\frac{\log(2)}{\sqrt z}+\ell_c\sqrt z}^{-\frac{\log(2)}{\sqrt z}+\sqrt z\infty} \vb{G_{j,\a}^{(2R+1)}\left(\bm{u_{[2]}},u_c\right)} du_c = O\left(\e^{\left(\l+\frac12\right)(3j+2R+2)+R+\l}e^\frac{\pi^2N}{12\e}\right).
	\end{equation}
	By taking $R$ sufficiently large, it follows from \eqref{eq:s2GjEMF}, \eqref{eq:s2Gjs}, and \eqref{eq:s2Gjeb} that
	\ba
		\sum_{u_c\in\mf u_c} G_{j,\a}\rb{\bm{u_{[2]}}, u_c} = \frac{1}{N\sqrt{z}} \int_{-\frac{\log(2)}{\sqrt{z}}+\R\sqrt{z}} G_{j,\a}\rb{\bm{u_{[2]}}, u_c} du_c + O\rb{\varepsilon^Le^{\frac{\pi^2N}{12\varepsilon}}}
	\ea
	for all $L\in\N$. Using the same argument, we sum over the other coordinates, and obtain that
	\begin{equation*}
		\sum_{\bm u\in\Tc_{\a,\b;N,\bm\ell}\cap \mc U_{\varepsilon,\lambda}} G_j(\bm u) = \frac{z^\frac{1-N}{2}}{N^{N-1}}\int_{\rb{-\frac{\log(2)}{\sqrt{z}}+\R\sqrt{z}}^{N-1}} G_{j,\a}\left(\bm{u_{[1]}}\right) \bm{du_{[1]}} + O\rb{\varepsilon^Le^{\frac{\pi^2N}{12\varepsilon}}}
	\end{equation*}
	for all $L\in\N$. Again, we may use \Cref{lem:uoor} to restrict the integral to
	\begin{equation*}
		\Uc_{\bm{[1]}} := \left\{\bm{u_{[1]}}\in\left(-\frac{\log(2)}{\sqrt z}+\R\sqrt z\right)^{N-1} : \left|\bm{\mu_{[1]}}\right|\le\e^\l\right\},
	\end{equation*}
	where $\bm{\mu_{[1]}}$ is the $(N-1)$-tuple associated to $\bm{u_{[1]}}$ via \eqref{eq:uje}, with a negligible error. So we may write
	\begin{equation*}
		\sum_{\bm u\in\Tc_{\a,\b;N,\bm\ell}\cap \mc U_{\varepsilon,\lambda}} G_j(\bm u) = \frac{z^\frac{1-N}{2}}{N^{N-1}}\int_{\mc U_{\bm{[1]}}} G_{j,\a}\left(\bm{u_{[1]}}\right) \bm{du_{[1]}} + O\rb{\varepsilon^Le^{\frac{\pi^2N}{12\varepsilon}}}.
	\end{equation*}
	Applying the asymptotic expansion \eqref{eq:s1Gja}, we get
	\begin{multline}\label{eq:Gjsaas} 
		\sum_{\bm u\in\Tc_{\a,\b;N,\bm\ell}\cap\Uc_{\e,\l}} G_j(\bm u) = \frac{z^{-\frac N2}}{N^N}\int_{\mc U_{\bm{[1]}}} \int_{u_\b}^{u_\b+\sqrt z\infty} G_j\left(\bm{u_{[1]}},u_\a\right) du_\a \bm{du_{[1]}}\\
		+ \sum_{r=0}^{R-1} \frac{z^\frac{1-N+r}{2}}{N^{N-1}}W_r\int_{\mc U_{\bm{[1]}}} G_j^{(r)}\left(\bm{u_{[1]}},u_\b\right) \bm{du_{[1]}} + O\left(\e^{\left(\l+\frac12\right)(3j+R+1)+\frac{R-1}{2}}e^\frac{\pi^2N}{12\e}\right)\int_{\mc U_{\bm{[1]}}} \bm{du_{[1]}}.
	\end{multline}
	Since $\mc U_{\bm{[1]}}$ has measure $\ll\e^{(N-1)(\l+\frac12)}$, the error term in \eqref{eq:Gjsaas} has size
	\[
		O\left(\e^{\left(\l+\frac12\right)(3j+R+N)+\frac{R-1}{2}}e^\frac{\pi^2N}{12\e}\right).
	\]
	Now, we may use \Cref{lem:uoor} again, to extend the integrals in \eqref{eq:Gjsaas} to $(-\frac{\log(2)}{\sqrt{z}}+\R\sqrt{z})^{N-1}$
	\begin{multline*}
		\sum_{\bm u\in\Tc_{\a,\b;N,\bm\ell}\cap\Uc_{\e,\l}} G_j(\bm u) = \frac{z^{-\frac N2}}{N^N}\int_{\left(-\frac{\log(2)}{\sqrt z}+\R\sqrt z\right)^{N-1}} \int_{u_\b}^{u_\b+\sqrt z\infty} G_j\left(\bm{u_{[1]}},u_\a\right) du_\a \bm{du_{[1]}}\\
		+ \sum_{r=0}^{R-1} \frac{z^\frac{1-N+r}{2}}{N^{N-1}}W_r \int_{\left(-\frac{\log(2)}{\sqrt z}+\R\sqrt z\right)^{N-1}} G_j^{(r)}\left(\bm{u_{[1]}},u_\b\right) \bm{du_{[1]}} + O\left(\e^{\left(\l+\frac12\right)(3j+R+N)+\frac{R-1}{2}}e^\frac{\pi^2N}{12\e}\right).
	\end{multline*}
	Finally, since $G_j^{(r)}(\bm{u_{[1]}},u_\a)$ is holomorphic and has rapid decay, we may shift the path and write
	\begin{multline*}
		\sum_{\bm u\in\Tc_{\a,\b;N,\bm\ell}\cap \mc U_{\varepsilon,\lambda}} G_j(\bm u) = \frac{z^{-\frac N2}}{N^N} \int_{\mc R_{\alpha,\beta;N}} G_j\rb{\bm u} \bm{du}\\
		+ \sum_{r=0}^{R-1} \frac{z^\frac{1-N+r}{2}}{N^{N-1}} W_r \int_{\R^{N-1}} G_j^{(r)} \rb{\bm{u_{[1]}}, u_\beta} \bm{du_{[1]}} + O\rb{\varepsilon^{\rb{\lambda+\frac 12}(3j+R+N)+\frac{R-1}2}e^{\frac{\pi^2N}{12\varepsilon}}},
	\end{multline*}
	where $\Rc_{\a,\b;N}:=\{\bm u\in\R^N:u_\a\ge u_\b\}$. So we may set
	\begin{align}\label{eq:VjN}
		V_{j,-N} &:= \frac{e^{-\frac{\pi^2N}{12z}}}{N^N}\int_{\Rc_{\a,\b;N}} G_j(\bm u) \bm{du},\\
		\label{eq:Vj1N}
		V_{j,r} &:= \frac{e^{-\frac{\pi^2N}{12z}}}{N^{N-1}}W_{r+N-1} \int_{\R^{N-1}} G_j^{(r+N-1)}\left(\bm{u_{[1]}},u_\b\right) \bm{du_{[1]}} \qquad (r\ge-N+1).
	\end{align}
Note that $V_{j,r}$ does not depend on $z$.
\end{proof}

We are now ready to prove \Cref{prp:gK1a}.

\begin{proof}[Proof of \Cref{prp:gK1a}]
	We estimate the following expression, occuring in \Cref{lem:gK1sa}:
	\[
		\leg z\pi^\frac N2\frac{e^\frac{\pi^2N}{12z}}{2^{K+\frac12}}\sum_{r=0}^{R-1} \sum_{\bm u\in\Tc_{\a,\b;N,\bm\ell}\cap\Uc_{\e,\l}} e^{-\bm u^T\bm u}C_r(\bm u)z^\frac r2 = \frac{1}{2^{K+\frac12}\pi^\frac N2}\sum_{r=0}^{R-1} z^\frac{N+r}{2}\sum_{\bm u\in\Tc_{\a,\b;N,\bm\ell}\cap\Uc_{\e,\l}} G_r(\bm u).
	\]
	By \Cref{lem:Gjua}, we have
	\ba
		z^{\frac{N+r}{2}} \sum_{\bm u \in \mc T_{\alpha,\beta;N,\bm\ell}\cap \mc U_{\varepsilon,\lambda}} G_r\rb{\bm u} = e^{\frac{\pi^2N}{12z}} \sum_{j=0}^{R-1-r} V_{r,j-N} z^{\frac{r+j}2} + O\rb{\varepsilon^{\rb{\lambda+\frac 12}\rb{2r+N+R-1} + \frac{N+R}{2}-1}e^{\frac{\pi^2N}{12\varepsilon}}}.
	\ea
	Summing over $r$ gives
	\begin{equation*}
		\sum_{r=0}^{R-1} z^{\frac{N+r}{2}} \sum_{\bm u \in \mc T_{\alpha,\beta;N,\bm\ell}\cap \mc U_{\varepsilon,\lambda}} G_r\rb{\bm u} = e^{\frac{\pi^2N}{12z}} \sum_{r=0}^{R-1} \sum_{j=0}^r V_{j,r-j-N} z^{\frac r2} + O\rb{\varepsilon^{3(R-1)\rb{\lambda+\frac 23} + N\rb{\lambda+1}-\frac 12}e^{\frac{\pi^2N}{12\varepsilon}}}. 
	\end{equation*}
	Since $N\ge 2$ and $-\frac 23 < \lambda < -\frac 12$, the error term $O(\varepsilon^{N(\lambda+\frac 12)+3R(\lambda+\frac 23)}e^{\frac{\pi^2N}{12\varepsilon}})$ from \Cref{lem:gK1sa} dominates. Setting 
	\begin{equation}\label{eq:Elrd} 
		E_{\bm\ell,r} := \sum_{j=0}^r V_{j,r-j-N}
	\end{equation}
	gives the proposition.
\end{proof}

Now we show that the asymptotic expansion above can also be applied for larger values of $|y|$, with negligible error. We require the following technical lemma about $\La(y)$.

\begin{lem}\label{lem:sy} 
	Let $s(y):=\re(\frac{\La(y)}{1+iy}-\frac{\pi^2N}{12})$. Then we have the following:
	\begin{enumerate}[leftmargin=*,label=\rm{(\arabic*)}]
		\item We have $s(y)\le 0$ for all $y\in\R$, and the equality holds if and only if $y=0$.
		
		\item For any $y_0>0$, there exists $d>0$ such that $s(y)<-d$ for all $|y|\ge y_0$.
		
		\item We have, as $y\to0$,
		\ba
			s(y) = N\rb{\log (2)^2 -\frac{\pi^2}{12}}{y^2} + O\rb{y^4}.
		\ea
	\end{enumerate}
\end{lem}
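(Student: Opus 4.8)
The plan is to reduce all three parts to a single closed‑form expression for $s(y)$. Write $\phi := y\log(2)$; since $\vb{2^{-(1+iy)}}=\tfrac12<1$ the series $\Li_2(2^{-(1+iy)})=\sum_{n\ge1}\tfrac{e^{-in\phi}}{n^2 2^n}$ converges absolutely and $s$ is real-analytic in $y$ (indeed even, since $\La(-y)=\ol{\La(y)}$ forces $s(-y)=s(y)$). First I would multiply $s(y)$ by the positive quantity $\tfrac{1+y^2}{N}$, using $\re[(1-iy)(1+iy)^2]=1+y^2$ to deal with the $(1+iy)^2$ term and the identity $\Li_2(\tfrac12)=\tfrac{\pi^2}{12}-\tfrac{\log(2)^2}{2}$ (equivalent to \eqref{eq:Rogerdilog}) to absorb the constants. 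This yields
\[
	\frac{1+y^2}{N}\,s(y) \;=\; \sum_{n\ge1}\frac{1-\cos(n\phi)+y\sin(n\phi)}{n^2 2^n}\;-\;\Big(\tfrac{\log(2)^2}{2}+\tfrac{\pi^2}{12}\Big)y^2 \;=:\; G(y),
\]
and everything then comes down to analysing the elementary function $G$.

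For parts (1) and (2) I would bound the series crudely — but, as it happens, sharply enough — via $1-\cos\theta\le\tfrac{\theta^2}{2}$ and $\abs{\sin\theta}\le\abs\theta$ for all real $\theta$, together with $\sum_{n\ge1}2^{-n}=1$ and $\sum_{n\ge1}\tfrac1{n2^n}=\log(2)$:
\[
	\sum_{n\ge1}\frac{1-\cos(n\phi)+y\sin(n\phi)}{n^2 2^n} \;\le\; \sum_{n\ge1}\frac{\tfrac12(n\phi)^2+\abs{y}\,n\abs\phi}{n^2 2^n} \;=\; \tfrac{\phi^2}{2}+\abs y\abs\phi\log(2) \;=\; \tfrac32\log(2)^2\,y^2 .
\]
Hence $G(y)\le\big(\log(2)^2-\tfrac{\pi^2}{12}\big)y^2$, and the numerical inequality $\pi^2>12\log(2)^2$ (for instance $\pi^2>9>12\cdot0.49>12\log(2)^2$) makes the coefficient strictly negative; since $s(y)$ has the same sign as $G(y)$ and $G(0)=0$, this is exactly (1). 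Writing $\g:=\tfrac{\pi^2}{12}-\log(2)^2>0$, the same bound gives $s(y)=\tfrac{N\,G(y)}{1+y^2}\le-\tfrac{N\g y^2}{1+y^2}$, and because $t\mapsto\tfrac t{1+t}$ is increasing, for $\abs y\ge y_0$ we get $s(y)\le-\tfrac{N\g y_0^2}{1+y_0^2}$, so any $0<d<\tfrac{N\g y_0^2}{1+y_0^2}$ works, proving (2).

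For part (3) I would Taylor-expand the series in $G$. From $1-\cos(n\phi)=\tfrac12(n\phi)^2+O\big((n\phi)^4\big)$ and $y\sin(n\phi)=\tfrac{\phi}{\log(2)}\big(n\phi+O((n\phi)^3)\big)$, with the remainders dominated by fixed powers of $n$ summable against $n^{-2}2^{-n}$, the series equals $\tfrac32\log(2)^2 y^2+O(y^4)$; thus $G(y)=\big(\log(2)^2-\tfrac{\pi^2}{12}\big)y^2+O(y^4)$, and dividing by $1+y^2=1+O(y^2)$ gives $s(y)=N\big(\log(2)^2-\tfrac{\pi^2}{12}\big)y^2+O(y^4)$, as claimed.

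Once the closed form for $G(y)$ is established the argument is entirely elementary, so the only genuine obstacle is the bookkeeping in that reduction, plus the a priori worry that the crude term-by-term bound on the series is too lossy. It is not: that bound is in fact tight to leading order at $y=0$, and the slack one needs — namely $\tfrac32\log(2)^2<\tfrac{\log(2)^2}{2}+\tfrac{\pi^2}{12}$ — is precisely the inequality $\pi^2>12\log(2)^2$.
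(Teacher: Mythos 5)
Your proof is correct and fills in exactly the verification the paper leaves implicit: the paper only asserts that (1) and (2) are ``easily verified'' and that (3) follows from the Taylor expansion at $y=0$, and your closed form $\frac{1+y^2}{N}\,s(y)=\sum_{n\ge1}\frac{1-\cos(ny\log 2)+y\sin(ny\log 2)}{n^2 2^n}-\bigl(\frac{\log(2)^2}{2}+\frac{\pi^2}{12}\bigr)y^2$, obtained from $\Li_2\bigl(2^{-(1+iy)}\bigr)=\sum_{n\ge1}\frac{e^{-iny\log 2}}{n^2 2^n}$ together with $\Li_2\bigl(\frac12\bigr)=\frac{\pi^2}{12}-\frac{\log(2)^2}{2}$, checks out. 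The elementary bounds $1-\cos\theta\le\frac{\theta^2}{2}$, $|\sin\theta|\le|\theta|$, the inequality $\pi^2>12\log(2)^2$, and the term-by-term Taylor expansion with summable remainders then deliver (1), (2), and (3) exactly as you state, so the argument is complete and consistent with the paper's intended (unwritten) proof.
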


\begin{proof}
	(1) and (2) are easily verified, and (3) is obtained by evaluating the Taylor series at $y=0$.
\end{proof}

Next we show that the asymptotic expansion in \Cref{prp:gK1a} gives a good approximation to $g_{\a,\b;N,\bm\ell}^{[K]}(z)$ in the range $y\ll\e^{-1-\frac{3\l}2+\d}$, which covers the major arc.

\begin{prp}\label{prp:gKwr} 
	If $y\ll\e^{-1-\frac{3\l}{2}+\d}$, then, for every $L\in\N$,
	\[
		g_{\a,\b;N,\bm\ell}^{[K]}(z) = \frac{e^\frac{\pi^2N}{12z}}{2^{K+\frac12}\pi^\frac N2}\sum_{r=0}^{R_0-1} E_{\bm\ell,r} z^\frac r2 + O\rb{\varepsilon^Le^\frac{\pi^2N}{12\varepsilon}}
	\]
	as $z\to0$, where $E_{\bm\ell,r}$ and $R_0=R_0(L)$ are as in \Cref{prp:gK1a}.
\end{prp}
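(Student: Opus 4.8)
The plan is to write $g_{\alpha,\beta;N,\bm\ell}^{[K]}(z)=g_{\alpha,\beta;N,\bm\ell}^{[K,1]}(z)+g_{\alpha,\beta;N,\bm\ell}^{[K,2]}(z)$ as in \eqref{eq:gKs} and to split the range of $y$. By the trivial bound $|e^{-H(\bm n)z}/\prod_j(q;q)_{n_j}|\le e^{-H(\bm n)\varepsilon}/\prod_j(|q|;|q|)_{n_j}$ together with \Cref{prp:ore}, the tail $g_{\alpha,\beta;N,\bm\ell}^{[K,2]}(z)$ is $O(\varepsilon^Le^{\pi^2N/(12\varepsilon)})$ for every $L\in\N$ and all $y$, so it suffices to establish the claimed expansion for $g_{\alpha,\beta;N,\bm\ell}^{[K,1]}(z)$. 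For $|y|\le\varepsilon^{1+\lambda+\delta}$ this is exactly \Cref{prp:gK1a} with $R=R_0$, whose remainder is $O(\varepsilon^Le^{\pi^2N/(12\varepsilon)})$ by the choice of $R_0$. Thus only the range $\varepsilon^{1+\lambda+\delta}<|y|\ll\varepsilon^{-1-\frac{3\lambda}2+\delta}$ remains, and there I would show that both sides of the asserted identity are separately $O(\varepsilon^Le^{\pi^2N/(12\varepsilon)})$.

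For the right-hand side this is immediate: $|e^{\pi^2N/(12z)}|=e^{\pi^2N/(12\varepsilon)}\exp(-\tfrac{\pi^2N}{12\varepsilon}\cdot\tfrac{y^2}{1+y^2})$, and since $\tfrac{y^2}{1+y^2}\ge\tfrac12\min(y^2,1)\ge\tfrac12\varepsilon^{2(1+\lambda+\delta)}$ throughout this range (splitting $|y|\le1$ from $|y|>1$), the subtracted exponent is $\gg\varepsilon^{1+2\lambda+2\delta}$, which — once $\delta$ is small enough that $1+2\lambda+2\delta<0$ — tends to $\infty$ faster than any multiple of $\log(1/\varepsilon)$; since $\sum_{r<R_0}E_{\bm\ell,r}z^{r/2}$ is polynomially bounded in $\varepsilon^{-1}$, the right-hand side is $O(\varepsilon^Le^{\pi^2N/(12\varepsilon)})$.

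For $g_{\alpha,\beta;N,\bm\ell}^{[K,1]}(z)$ one should use the wide-range summand expansion \Cref{prp:sawr}, which is valid precisely on this range of $y$; it is cleanest to split further at a fixed threshold $y_0\le1$. For $|y|>y_0$ one bounds the summand term by term: \Cref{prp:sawr} shows each summand is $\ll\varepsilon^{-O(1)}e^{\pi^2N/(12\varepsilon)}e^{s(y)/\varepsilon}e^{O(\varepsilon^\lambda)}$ where $s(y)=\re(\tfrac{\Lambda(y)}{1+iy}-\tfrac{\pi^2N}{12})$, and by \Cref{lem:sy}(2) $s(y)\le-d$, so $e^{s(y)/\varepsilon}\le e^{-d/\varepsilon}$ beats $e^{O(\varepsilon^\lambda)}$ and the $O(\varepsilon^{N\lambda})$ lattice points of $\Nc_{\varepsilon,\lambda}$, giving $g_{\alpha,\beta;N,\bm\ell}^{[K,1]}(z)=O(\varepsilon^Le^{\pi^2N/(12\varepsilon)})$. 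For $\varepsilon^{1+\lambda+\delta}<|y|\le y_0$ one instead re-runs the lattice-summation argument of \Cref{section:afp} (Lemmas \ref{lem:uoor}--\ref{lem:Gjua} and the proof of \Cref{prp:gK1a}) with \Cref{prp:sawr} in place of \Cref{prp:sanr}; here, because $\re(\tfrac{1+iy}{1-2^{-(1+iy)}})$ stays positive for $|y|\le y_0$, the Gaussian sums over the $v$-variables converge, complete the square, and yield, up to negligible errors, an expression of exponential size $e^{\pi^2N/(12\varepsilon)}e^{s(y)/\varepsilon+O(y^2/\varepsilon)}$ with a polynomially bounded prefactor; by \Cref{lem:sy}(3) the exponent is $\ll-y^2/\varepsilon\ll-\varepsilon^{1+2\lambda+2\delta}\to-\infty$ polynomially, so again $g_{\alpha,\beta;N,\bm\ell}^{[K,1]}(z)=O(\varepsilon^Le^{\pi^2N/(12\varepsilon)})$. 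Combining the two ranges of $y$ and the two pieces of $g_{\alpha,\beta;N,\bm\ell}^{[K]}$ completes the proof.

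The step I expect to be the main obstacle is the estimate for $g_{\alpha,\beta;N,\bm\ell}^{[K,1]}(z)$ in the sub-range $\varepsilon^{1+\lambda+\delta}<|y|\le y_0$. Running the \Cref{section:afp} machinery with \Cref{prp:sawr} introduces the factor $\exp(-\tfrac12\bm v^T\Ac\bm v+\tfrac1{\sqrt z}(-\log(2)(1+iy)-\Log(1-2^{-(1+iy)}))\sum_jv_j)$, whose modulus can be as large as $e^{O(\varepsilon^\lambda)}$ — larger than any power of $\varepsilon$ — so a term-by-term bound is hopeless, and one must genuinely carry out the Gaussian summation so that the $\tfrac1{\sqrt z}$-linear term completes the square against $\bm v^T\Ac\bm v$ (convergence being ensured by $\re\Ac>0$, a consequence of $\re(1-2^{-(1+iy)})\ge\tfrac12$). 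The additional pole this produces must then be shown to keep the total exponential rate below $\tfrac{\pi^2N}{12\varepsilon}$ with the stated polynomial deficit: \Cref{lem:sy} supplies the $\Lambda(y)$-contribution, and an elementary analysis (in the spirit of \Cref{lem:sy}) of the real part of $\tfrac{(1-2^{-(1+iy)})(\Log(1-2^{-(1+iy)})-\log 2^{-(1+iy)})^2}{1+iy}$ controls the completion-of-the-square correction. One also has to check that the various Euler--Maclaurin remainders of \Cref{section:afp} remain negligible over the enlarged $y$-range, which is routine once the $y$-dependence is carried through the estimates.
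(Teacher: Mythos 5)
Your skeleton is the paper's: split off $g_{\a,\b;N,\bm\ell}^{[K,2]}$ via \Cref{prp:ore}, use \Cref{prp:gK1a} for $y\ll\e^{1+\l+\d}$, and on the remaining range show that $g_{\a,\b;N,\bm\ell}^{[K,1]}$ and the proposed main term are each $O(\e^Le^{\frac{\pi^2N}{12\e}})$, using \Cref{prp:sawr} and \Cref{lem:sy}. Your bound for the main term and your treatment of $|y|>y_0$ are fine and match the paper. The genuine gap is the range $\e^{1+\l+\d}<|y|\le y_0$: there you do not prove the needed bound for $g_{\a,\b;N,\bm\ell}^{[K,1]}$, but only sketch a Gaussian-summation/completion-of-the-square scheme whose key real-part estimates and Euler--Maclaurin remainders you explicitly leave ``to be shown''. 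As written, the central estimate of the proposition is missing on exactly the range you yourself identify as the main obstacle.

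Moreover, the reason you give for abandoning a term-by-term bound there is not correct: the factor $\exp(-\tfrac12\bm v^T\Ac\bm v+\tfrac1{\sqrt z}(-\log(2)(1+iy)-\Log(1-2^{-(1+iy)}))\sum_jv_j)$ must be weighed against $e^{\frac{s(y)}{\e}}$, not against powers of $\e$. Since $v_j/\sqrt z=\mu_j\in\R$ with $|\bm\mu|\le\e^\l$, the real part of the linear exponent equals $\re\rb{-\log(2)(1+iy)-\Log\rb{1-2^{-(1+iy)}}}\sum_j\mu_j\ll y^2\e^\l=\rb{y^2/\e}\e^{1+\l}$ for $|y|\le y_0$, because this real part vanishes to second order at $y=0$; and $-\tfrac12\bm v^T\Ac\bm v$ has nonpositive real part for $|y|\ll\e^{\frac14}$ (as $v_j$ is a real multiple of $\sqrt z$) and is $O(\e^{1+2\l})=o(y^2/\e)$ for $\e^{\frac14}\ll y\le y_0$ since $\l>-\tfrac23$. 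Meanwhile \Cref{lem:sy}(3) gives $s(y)\asymp-y^2$, and on your range $y^2/\e\ge\e^{1+2\l+2\d}\to\infty$, so each summand is $e^{\frac{\pi^2N}{12\e}}$ times a superpolynomially small factor, and the $O(\e^{N\l})$ lattice points of $\Nc_{\e,\l}$ are harmless; no completion of squares is needed. This is precisely the paper's argument, and in fact the paper sidesteps your intermediate range entirely: since $\l<-\tfrac12$, the narrow range $y\ll\e^{1+\l+\d}$ and the wide range $y\gg\e^{\frac12-\d}$ already overlap for $\d$ small, so the wide-range estimate never has to be pushed below $\e^{\frac12-\d}$. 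Either repair (the correct exponent comparison, or the overlap observation) closes your gap.
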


\begin{proof}
	We split as in \eqref{eq:gKs}. By \Cref{prp:ore}, we have, for all $L\in\N$ as $z\to0$,
	\[
		g_{\a,\b;N,\bm\ell}^{[K,2]}(z) \ll \e^L e^\frac{\pi^2N}{12\e}.
	\]
	So it remains to estimate $g_{\a,\b;N,\bm\ell}^{[K,1]}$. The proposition follows from the following claim:
	\[
		e^{-\frac{\pi^2 N}{12\e}}\left(g_{\a,\b;N,\bm\ell}^{[K,1]}(z) - \frac{e^\frac{\pi^2N}{12z}}{2^{K+\frac12}\pi^\frac N2}\sum_{r=0}^{R_0-1} E_{\bm\ell,r}z^\frac r2\right) = O\left(\e^L\right).
	\]
	From \Cref{prp:sawr}, if $y \ll\e^{-1-\frac{3\l}2+\d}$, then we have an asymptotic expansion
	\begin{align}\label{eq:gK1wra} 
		&e^{-\frac{\pi^2N}{12\e}}g_{\a,\b;N,\bm\ell}^{[K,1]}(z) = 2^{-\left(K+\frac12\right)(1+iy)}\left(1-2^{-(1+iy)}\right)^{-\frac N2}\leg{z}{2\pi}^\frac N2e^{\frac1\e\left(\frac{\La(y)}{1+iy}-\frac{\pi^2N}{12}\right)}\\
		\nonumber
		&\hspace{.3cm}\times \sum_{\bm n\in\Sc_{\a,\b;N,\bm\ell}\cap\Nc_{\e,\l}} e^{-\frac12\bm v^T\Ac\bm v+\frac{1}{\sqrt z}\left(-\log(2)(1+iy)-\Log\left(1-2^{-(1+iy)}\right)\right)\sum_{j=1}^N v_j}\sum_{r=0}^{R_0-1} D_r(\bm v,y)z^\frac r2\\
		\nonumber
		&\hspace{.6cm}+ z^\frac N2e^{\frac1\e\left(\frac{\La(y)}{1+iy}-\frac{\pi^2N}{12}\right)}O\left(\e^{2R_0\d_2}\right) \sum_{\bm n\in\Sc_{\a,\b;N,\bm\ell}\cap\Nc_{\e,\l}} e^{-\frac12\bm v^T\Ac\bm v+\frac{1}{\sqrt z}\left(-\log(2)(1+iy)-\Log\left(1-2^{-(1+iy)}\right)\right)\sum_{j=1}^N v_j}.
	\end{align}
	We claim that if $\varepsilon^{\frac 12-\delta} \ll y\ll \varepsilon^{-1-\frac {3\lambda}2+\delta}$ for some $\delta>0$, then we have, for all $L\in\N$, 
	\begin{equation}\label{eq:gK1oMa} 
		e^{-\frac{\pi^2 N}{12\varepsilon}} g_{\alpha, \beta;N,\bm\ell}^{[K,1]}(z) = O\rb{\varepsilon^L}.
	\end{equation}
	To prove \eqref{eq:gK1oMa}, it suffices to show that the exponent
	\begin{equation*}
		\frac1\e\left(\frac{\La(y)}{1+iy}-\frac{\pi^2N}{12}\right) -\frac12\bm v^T\Ac\bm v+\frac{1}{\sqrt z}\left(-\log(2)(1+iy)-\Log\left(1-2^{-(1+iy)}\right)\right)\sum_{j=1}^N v_j
	\end{equation*}
	has negative real part of size $\gg\e^{-\d_0}$ for some $\d_0>0$; the other factors in \eqref{eq:gK1wra} are bounded as $z\to0$. By \Cref{lem:sy} (1) the real part of $\frac1\e\re(\frac{\La(y)}{1+iy}-\frac{\pi^2N}{12})=\frac{s(y)}{\e}$ is negative. So it suffices to show that this exponent has sufficiently large size and dominates other exponents with positive real parts.

	First consider the case $\e^{-\d+\frac12}\ll y\ll1$ for some (sufficiently small) $\d>0$. By \Cref{lem:sy} (3), we find that
	\begin{equation*}
		\frac{s(y)}{\e} \gg \frac{y^2}{\e} \gg \e^{-2\d}
	\end{equation*}
	as $\e\to0$. Meanwhile, by computing the Taylor series expansion we have, as $y\to0$,
	\[
		\re\left( -\log(2)(1+iy)-\Log\left(1-2^{-(1+iy)}\right)\right) \ll y^2.
	\]
	As $|\bm\nu|\ll\e^\l$ (see the proof of \Cref{prp:sewr}), we conclude that
	\[
		\re\left(\frac{1}{\sqrt z}\left(-\log(2)(1+iy)-\Log\left(1-2^{-(1+iy)}\right)\right)\sum_{j=1}^N v_j\right) \ll \e^\l y^2.
	\]
	Next we consider the exponent $-\frac 12\bm v^T\Ac\bm v$. For this we split into two subcases.
	\begin{enumerate}[leftmargin=*]
		\item Suppose that $\e^{-\d+\frac12}\ll y\ll\e^\frac14$. Since $v_j$ is a real multiple of $\sqrt z$, the condition $y\ll\e^\frac14$ implies that $-\frac12\bm v^T\Ac\bm v$ has negative real part as $z\to0$.
		
		\item Suppose $\e^\frac14\ll y\ll1$. In this case we have $|\bm v|\ll\e^{\frac12+\l}$, and it follows that $-\frac12\bm v^T\Ac\bm v\ll\e^{1+2\l}$.
	\end{enumerate}
	In either case, the exponent $\frac{s(y)}{\e}$ has size $\gg\e^{-2\d}$ and dominate other exponents appearing in \eqref{eq:gK1wra} that have positive real parts. So we conclude that \eqref{eq:gK1oMa} holds if $\e^{-\d+\frac12}\ll y\ll1$, by taking $\d_0=2\d$.
	
	Next we consider the case $1\ll y\ll\e^{-1-\frac{3\l}{2}+\d}$ for some $\d>0$. We use the trivial bound
	\ba
		\re\rb{-\log (2)(1+iy) - \Log\rb{1-2^{-(1+iy)}}} \ll 1.
	\ea
	It then follows from the bound $\vb{\bm\nu}\ll \varepsilon^\lambda$ that
	\[
		\re\left(\frac{1}{\sqrt z}\left(-\log(2)(1+iy)-\Log\left(1-2^{-(1+iy)}\right)\right)\sum_{j=1}^N v_j\right) \ll \e^\l.
	\]
	Meanwhile, by \Cref{lem:sy} (2), we have $\frac{s(y)}{\e}\gg\frac1\e$. Finally, as $|\bm\nu|\ll\e^\l$ and $z\ll\e^{-\frac{3\l}{2}+\d_2}$, where $\d_2=\min\{1+\frac{3\l}{2},\d\}>0$ (see the proof of \Cref{prp:sewr}), we deduce that $|\bm v|\ll\e^{\frac\l4+\frac{\d_2}{2}}$, and hence
	\[
		-\tfrac12\bm v^T\Ac\bm v \ll \e^{\frac\l2+\d_2}.
	\]
	From the computation above, we have that $\frac{s(y)}{\e}\gg\frac1\e$, and this exponent dominates the other exponents. So \eqref{eq:gK1oMa} also holds.

	For the expression $2^{-K-\frac12}\pi^{-\frac N2}e^{\frac{\pi^2N}{12z}-\frac{\pi^2N}{12\e}}\sum_{r=0}^{R_0-1}E_{\bm\ell,r}z^\frac r2$, we only have a single exponent, namely $\frac{\pi^2N}{12z}-\frac{\pi^2N}{12\varepsilon}$. This exponent has negative real part of size 
	\[
		\re\left(\frac{\pi^2N}{12z}-\frac{\pi^2N}{12\e}\right) = \frac{\pi^2N}{12\e}\rb{\re\left(\frac{1}{1+iy}\right)-1} \gg \e^{\max\{1+2\l-2\d,-1\}}
	\]
	for $\e^{\frac 12-\d}\ll y\ll\e^{-1-\frac{3\l}{2}+\d}$. It follows that, for $\e^{\frac 12-\d}\ll y\ll\e^{-1-\frac{3\l}{2}+\d}$, we have, for all $L\in\N$,
	\[
		\frac{e^{\frac{\pi^2N}{12z}-\frac{\pi^2N}{12\varepsilon}}}{2^{K+\frac12}\pi^\frac N2} \sum_{r=0}^{R_0-1} E_{\bm\ell,r}z^\frac r2 = O\left(\e^L\right).
	\]

	On the other hand, if $y\ll\e^{1+\l+\d}$, then, by \Cref{prp:gK1a}, we have, for all $L\in\N$,
	\[
		g_{\a,\b;N,\bm\ell}^{[K,1]}(z) - \frac{e^\frac{\pi^2N}{12z}}{2^{K+\frac12}\pi^\frac N2}\sum_{r=0}^{R_0-1} E_{\bm\ell,r}z^\frac r2 \ll \e^Le^\frac{\pi^2N}{12\e}.
	\]
	Choosing $\d>0$ sufficiently small, the two cases cover the full range $y\ll\e^{-1-\frac{3\l}{2}+\d}$, establishing the proposition.
\end{proof}

We are now ready to derive the asymptotic expansion of $d_{\a,\b;N}^{[K]}(n)$.

\begin{thm}\label{thm:da} 
	We have for $R\in\N$, as $n\to\infty$,
	\begin{equation*}
		d_{\a,\b;N}^{[K]}(n) = \frac{e^{\pi\sqrt\frac n3}}{2^{K+\frac12}\pi^\frac N2}\left(\sum_{r=0}^{R-1} \sum_{\substack{\bm\ell\in(\Z/N\Z)^N\\NH(\bm\ell)\equiv n\Pmod N}} \sum_{j=0}^{\Flo{\frac r2}} c_{\frac r2-j,\frac\pi2\sqrt\frac N3,j}E_{\bm\ell,r-2j}\leg Nn^\frac{r+3}{4}+O\left(n^{-\frac{R+3}{4}}\right)\right),
	\end{equation*}
	where $E_{\bm\ell,r}$ are as in \Cref{prp:gK1a} and
	\[
		c_{{A,B},r} := \frac{\left(-\frac{1}{4B}\right)^rB^{A+\frac12}{\Ga\left(A+r+\frac32\right)}}{2\sqrt\pi{r!\Ga\left(A-r+\frac32\right)}} .
	\]
\end{thm}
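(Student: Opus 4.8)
The plan is to run the Circle Method. Starting from the Cauchy integral
\[
d_{\alpha,\beta;N}^{[K]}(n) = \frac{1}{2\pi i}\oint_{|q|=e^{-\varepsilon_q}}\frac{\mathcal D_{\alpha,\beta;N}^{[K]}(q)}{q^{n+1}}\,dq,\qquad \varepsilon_q:=\frac{\pi}{2\sqrt{3n}},
\]
(the radius $\varepsilon_q$ being the saddle point of $\tfrac{\pi^2}{12\varepsilon_q}+n\varepsilon_q$, whose minimal value is $\pi\sqrt{n/3}$), I would write $q=e^{-\varepsilon_q+i\theta}$ and dissect $(-\pi,\pi]$ into $N$ major arcs around $\theta=\tfrac{2\pi a}{N}$ ($0\le a<N$) of half-width $\varepsilon_q Y_0$, where $Y_0:=(N\varepsilon_q)^{-1-\frac{3\lambda}{2}+\delta}$, and the complementary minor arcs. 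On the major arc near $\zeta_N^a$ set $q=\zeta_N^a e^{-z/N}$, so that $z=N\varepsilon_q(1+iy)$ and $\varepsilon:=\re(z)=N\varepsilon_q$. Using $\mathcal D_{\alpha,\beta;N,\bm\ell}^{[K]}(\zeta_N^a e^{-z/N})=\zeta_N^{aNH(\bm\ell)}g_{\alpha,\beta;N,\bm\ell}^{[K]}(z)$ — which holds because $e^{2\pi iH(\bm n)}$ depends only on $\bm n\bmod N$ and $NH(\bm\ell)\in\Z$, both immediate parity/integrality checks — together with \Cref{prp:gKwr}, summing over $a$ (the character sum $\sum_{a=0}^{N-1}\zeta_N^{a(NH(\bm\ell)-n)}$ equals $N$ when $NH(\bm\ell)\equiv n\pmod N$ and $0$ otherwise), and converting $\tfrac{\varepsilon}{2\pi}\int dy=\tfrac{1}{2\pi i}\int dz$, the major-arc contribution becomes
\[
\frac{1}{2^{K+\frac12}\pi^{N/2}}\ \sum_{\substack{\bm\ell\in(\Z/N\Z)^N\\ NH(\bm\ell)\equiv n\,(N)}}\ \sum_{r=0}^{R_0-1}E_{\bm\ell,r}\,\mathcal I_r,\qquad \mathcal I_r:=\frac{1}{2\pi i}\int_{\substack{\re(z)=\varepsilon\\ |\im(z)|\le\varepsilon Y_0}}z^{\frac r2}e^{\frac{\pi^2N}{12z}+\frac nN z}\,dz,
\]
up to an error that is $O(n^{-M}e^{\pi\sqrt{n/3}})$ for every $M$, since the parameter $L$ in \Cref{prp:gKwr} may be taken arbitrarily large; here $E_{\bm\ell,r}$ is as in \Cref{prp:gK1a} and $R_0=R_0(R)$ is to be fixed large below.

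For the minor arcs I would combine the decomposition $g_{\alpha,\beta;N,\bm\ell}^{[K]}=g_{\alpha,\beta;N,\bm\ell}^{[K,1]}+g_{\alpha,\beta;N,\bm\ell}^{[K,2]}$ with \Cref{prp:ore} (bounding $g^{[K,2]}$) and \Cref{prp:mae} (bounding each of the $O(\varepsilon^{N\lambda})$ summands of $g^{[K,1]}$, for which $|y|>Y_0\gg\varepsilon^{-\delta'}$), obtaining $|\mathcal D_{\alpha,\beta;N}^{[K]}(\zeta_N^a e^{-z/N})|\ll\varepsilon^L e^{\pi^2N/(12\varepsilon)}$ there. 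Since $|q^{-n}|=e^{n\varepsilon_q}$ and $\tfrac{\pi^2N}{12\varepsilon}+n\varepsilon_q=\tfrac{\pi^2}{12\varepsilon_q}+n\varepsilon_q=\pi\sqrt{n/3}$ for the chosen $\varepsilon_q$, the total minor-arc integral is $\ll\varepsilon^L e^{\pi\sqrt{n/3}}\ll n^{-M}e^{\pi\sqrt{n/3}}$. As $-1-\tfrac{3\lambda}{2}\in(-\tfrac14,0)$ for $\lambda\in(-\tfrac23,-\tfrac12)$, one has $Y_0\to\infty$ while still $Y_0\gg\varepsilon^{-\delta'}$ for suitably small $\delta,\delta'$, so the major- and minor-arc ranges overlap and cover the full circle while remaining inside the validity ranges of \Cref{prp:gKwr} and \Cref{prp:mae}; this is just a matter of choosing the exponents correctly.

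The heart of the matter is the evaluation of $\mathcal I_r$. Put $C:=\tfrac{\pi^2N}{12}$ and $D:=\tfrac nN$. Since $z^{r/2}e^{C/z+Dz}$ is holomorphic off $z=0$ and the cut $(-\infty,0]$, I would complete the vertical segment of $\mathcal I_r$ to a Hankel contour $\mathcal H$: connect the endpoints $\varepsilon\pm i\varepsilon Y_0$ along horizontal segments at heights $\pm\varepsilon Y_0$, close around the cut, and let the far vertical part recede to $\re(z)\to-\infty$ (where $e^{Dz}$ forces decay), giving $\mathcal I_r=\tfrac{1}{2\pi i}\int_{\mathcal H}z^{r/2}e^{C/z+Dz}\,dz+(\text{error})$ by Cauchy's theorem. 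On the horizontal connecting segments the exponent of the integrand is $C\,\re(1/z)+D\,\re(z)\le\tfrac\pi2\sqrt{n/3}\big(1+\tfrac1{1+Y_0^2}\big)+o(\sqrt n)$, which is short of $\pi\sqrt{n/3}$ by $\asymp\sqrt n$, so the error there is exponentially negligible compared with the main term. The substitution $z=\sqrt{C/D}\,w$ now turns the Hankel integral into
\[
(C/D)^{\frac{r+2}{4}}\cdot\frac{1}{2\pi i}\int_{\mathcal H}w^{r/2}e^{\sqrt{CD}\,(w+1/w)}\,dw=(C/D)^{\frac{r+2}{4}}\,I_{-\frac r2-1}\!\big(2\sqrt{CD}\,\big),
\]
by the classical Hankel representation of the modified Bessel function $I_\nu$, and $2\sqrt{CD}=\pi\sqrt{n/3}$.

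Finally I would insert the classical asymptotic expansion $I_\nu(x)\sim\tfrac{e^x}{\sqrt{2\pi x}}\sum_{j\ge0}(-1)^j a_j(\nu)x^{-j}$ with $a_j(\nu)=\tfrac{\Gamma(\nu+j+\frac12)}{j!\,2^j\,\Gamma(\nu-j+\frac12)}$ (which also governs $I_{-\nu}$ up to an exponentially small $K_\nu$), and set $B:=\tfrac\pi2\sqrt{N/3}$, noting $C=B^2$ and $C/D=NB^2/n$. Each $\mathcal I_r$ then has an expansion $\mathcal I_r\sim e^{\pi\sqrt{n/3}}\sum_{j\ge0}(\text{coeff})\,n^{-(r+2j+3)/4}$; regrouping $\sum_r E_{\bm\ell,r}\mathcal I_r$ by the total power of $n$, so that with the new outer index (again called $r$) the term $E_{\bm\ell,r-2j}$ is paired with the $j$-th Bessel coefficient and contributes at order $n^{-(r+3)/4}$, and using the identity $\tfrac{\Gamma(\frac r2+\frac32)}{j!\,2^j\,\Gamma(\frac r2-2j+\frac32)}=a_j\!\big(\tfrac r2-j+1\big)$, the coefficient of $E_{\bm\ell,r-2j}(N/n)^{(r+3)/4}$ collapses to exactly $c_{\frac r2-j,B,j}$. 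Taking $R_0$ — hence $L$ in \Cref{prp:gKwr} — large enough that the discarded Bessel-tail terms and all arc errors are $O(n^{-(R+3)/4}e^{\pi\sqrt{n/3}})$ yields \Cref{thm:da}. I expect the contour-completion step to be the main obstacle: one must estimate the integrand on the connecting horizontal segments — where $e^{C/z}$ alone is as large as $e^{c\sqrt n}$ — sharply enough to see that the combined exponent nonetheless undershoots $\pi\sqrt{n/3}$ by order $\sqrt n$, and, on the algebraic side, one must carry out the bookkeeping matching the regrouped double sum against the definition of $c_{A,B,r}$.
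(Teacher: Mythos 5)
Your proposal is correct and its skeleton coincides with the paper's proof: the same Cauchy integral with radius at the saddle point, the same reduction to the classes $\bm\ell$ with the root-of-unity average producing the congruence condition $NH(\bm\ell)\equiv n\Pmod N$ (your identity $\mc D_{\a,\b;N,\bm\ell}^{[K]}(\zeta_N^ae^{-z/N})=\zeta_N^{aNH(\bm\ell)}g_{\a,\b;N,\bm\ell}^{[K]}(z)$ is exactly the paper's \eqref{eq:dc}), the same use of \Cref{prp:gKwr} on the major arc and of \Cref{prp:ore} together with \Cref{prp:mae} on the minor arc, and the same regrouping of $\sum_r E_{\bm\ell,r}\mc I_r$ by total powers of $n$. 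The one genuine difference is the treatment of the key integral $\mc I_r=\frac{1}{2\pi i}\int z^{r/2}e^{\frac{\pi^2N}{12z}+\frac{nz}{N}}dz$: the paper simply quotes \cite[Lemma 3.7]{NR2017}, whose statement is already formulated for the truncated (major-arc) contour, so no contour completion is needed there; you instead re-derive that lemma from scratch via the Schl\"afli--Hankel representation, identifying $\mc I_r$ with $(C/D)^{\frac{r+2}{4}}I_{-\frac r2-1}\big(\pi\sqrt{n/3}\big)$ up to the connecting-segment error, and your estimate that the exponent on the horizontal segments is $\le\frac\pi2\sqrt{n/3}\big(1+\frac{1}{1+Y_0^2}\big)$, hence short of $\pi\sqrt{n/3}$ by order $\sqrt n$, is sound. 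I checked your bookkeeping: with $a_j(\nu)=\frac{\Ga(\nu+j+\frac12)}{j!\,2^j\,\Ga(\nu-j+\frac12)}$ one has $\frac{(-1)^jB^{A+\frac12-j}a_j(A+1)}{2\sqrt\pi\,2^j}=c_{A,B,j}$, so your Bessel expansion reproduces the cited lemma exactly, and the identity $a_j\big(\tfrac r2-j+1\big)=\frac{\Ga(\frac r2+\frac32)}{j!\,2^j\,\Ga(\frac r2-2j+\frac32)}$ gives precisely the coefficient $c_{\frac r2-j,\frac\pi2\sqrt{N/3},j}$ of $E_{\bm\ell,r-2j}(N/n)^{\frac{r+3}{4}}$ in the statement. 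The remaining discrepancies (major arcs of slowly shrinking width $\e_q Y_0$ with $Y_0\to\infty$ versus the paper's fixed-angle major arc $|y|\le\th$; explicit geometric dissection versus the paper's purely algebraic averaging over $k$) are cosmetic, since both choices stay inside the validity ranges of \Cref{prp:gKwr} and \Cref{prp:mae}. Net effect: your argument is self-contained where the paper's is citation-based, at the cost of the extra contour-completion estimate, which you carry out correctly.
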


\begin{proof}
	By Cauchy's Theorem we have, for $n\in\N_0$,
	\ba
		d_{\alpha,\beta;N}^{[K]}(n) = \frac{1}{2\pi i}\int_\mc C \frac{\mc D_{\alpha,\beta;N}^{[K]}(q)}{q^{n+1}} dq,
	\ea
	where $\Cc$ is a circle centred at the origin inside the unit circle surrounding zero exactly once counter-clockwise. Using \eqref{eq:gfDd} and the change of variables $q=e^{-z}$, we write, for any $\e>0$, with $\z_N:=e^\frac{2\pi i}{N}$
	\begin{equation}\label{eq:dc} 
		d_{\a,\b;N}^{[K]}(n) = \frac1N\sum_{k=0}^{N-1} \sum_{\bm\ell\in(\Z/N\Z)^N} \z_N^{(n-NH(\bm\ell))k}\frac{1}{2\pi i}\int_{\e-\pi i}^{\e+\pi i} g_{\a,\b;N,\bm\ell}^{[K]}(z)e^\frac{nz}{N} dz.
	\end{equation}
	
	Let $\th>0$ be fixed. We split the integral into the {\it major arc} $\Cc_1(\e):=\{z=\e(1+iy):|y|\le\th\}$ and the {\it minor arc} $\Cc_2(\e):=\{z=\e(1+iy):\th<|y|\le\frac\pi\e\}$. Note that \Cref{prp:gKwr} applies for the whole major arc (because $\e^{-1-\frac{3\l}{2}+\d}\gg1$). So on the major arc, for every $L\in\N$, we have
	\begin{equation}\label{eq:Mae} 
		g_{\a,\b;N,\bm\ell}^{[K]}(z) - \frac{e^\frac{\pi^2N}{12z}}{2^{K+\frac12}\pi^\frac N2}\sum_{r=0}^{R_0-1} E_{\bm\ell,r}z^\frac r2 \ll \e^Le^{\frac{\pi^2N}{12\e}},
	\end{equation}
	where $R_0=R_0(L)$ is given as in \Cref{prp:gK1a}. On the minor arc, we use \Cref{prp:gKwr,prp:mae} to obtain for all $L\in\N$,
	\begin{equation}\label{eq:mae} 
		g_{\a,\b;N,\bm\ell}^{[K]}(z) \ll \e^Le^\frac{\pi^2N}{12\varepsilon}.
	\end{equation}
	
	Let $A\ge0$, $B>0$. By \cite[Lemma 3.7]{NR2017}, we have, as $n\to\infty$,
	\ba
		\frac{1}{2\pi i}\int_{\mc C_1\rb{\frac{B}{\sqrt{n}}}} z^Ae^{\frac{B^2}{z}+nz} dz = n^{\frac14(-2A-3)} e^{2B\sqrt{n}} \Bigg(\sum_{r=0}^{R-1} \frac{c_{A,B,r}}{n^{\frac r2}} + O\rb{n^{-\frac R2}}\Bigg).
	\ea
	Hence we obtain the asymptotic expansion
	\begin{multline}\label{eq:Elria}
		\frac{2^{-K-\frac12}\pi^{-\frac N2}}{2\pi i}\int_{\Cc_1\left(\frac{\pi N}{2\sqrt{3n}}\right)} e^\frac{\pi^2N}{12z}\sum_{r=0}^{R-1} E_{\bm\ell,r}z^\frac r2e^\frac{nz}{N} dz\\
		= \frac{e^{\pi\sqrt\frac n3}}{2^{K+\frac12}\pi^\frac N2}\left(\sum_{r=0}^{R-1} \sum_{j=0}^{\Flo{\frac r2}} c_{\frac r2-j,\frac\pi2\sqrt\frac N3,j} E_{\bm\ell,r-2j}\left(\frac Nn\right)^\frac{r+3}{4}+O\left(n^{-\frac{R+3}{4}}\right)\right). 
	\end{multline}
	Meanwhile, if a function $h$ satisfies $h(z) \ll \e^Le^{\frac{\pi^2 N}{12\varepsilon}}$ as $\varepsilon \to 0$ for $L\ge 0$, then we have, as $n\to\infty$,
	\begin{equation}\label{eq:hi} 
		\frac{1}{2\pi i}\int_{\frac{\pi N}{2\sqrt3n}-\pi i}^{\frac{\pi N}{2\sqrt3n}+\pi i} h(z)e^\frac{nz}{N} dz = O\left(n^{-\frac L2}e^{\pi\sqrt\frac n3}\right),
	\end{equation}
	and we use \eqref{eq:hi} to evaluate the minor arc integral and the error integral.
	
	Let $R\in\N$. We set $L=\frac{R+3}{2}$, and we pick $R_0=R_0(L)$ as in \Cref{prp:gK1a}, and write
	\begin{multline*}
		\frac{1}{2\pi i}\int_{\mc C\rb{\frac{\pi N}{2\sqrt{3n}}}} g_{\a,\b;N,\bm\ell}^{[K]}(z)e^\frac{nz}{N} dz\\
		= \frac{2^{-K-\frac12} \pi^{-\frac N2}}{2\pi i}\int_{\mc C\rb{\frac{\pi N}{2\sqrt{3n}}}} e^\frac{\pi^2N}{12z} \sum_{r=0}^{R_0-1} E_{\bm\ell,r}z^{\frac r2}e^{\frac{nz}{N}} dz + \frac{1}{2\pi i} \int_{\mc C\rb{\frac{\pi N}{2\sqrt{3n}}}} h(z) e^{\frac{nz}{N}} dz,
	\end{multline*}
	where we have $h(z)\ll\e^Le^\frac{\pi^2N}{12\e}=\e^\frac{R+3}{2}e^\frac{\pi^2N}{12\e}$ by \eqref{eq:Mae} and \eqref{eq:mae}. Using \eqref{eq:Elria} and \eqref{eq:hi}, the integrals above can be evaluated as
	\begin{align*}
		\frac{e^{\pi \sqrt{\frac n3}}}{2^{K+\frac12}\pi^\frac N2}\left(\sum_{r=0}^{R_0-1} \sum_{j=0}^{\Flo{\frac r2}} c_{\frac r2-j,\frac\pi2\sqrt{\frac{N}{3}},j} E_{\bm\ell,r-2j}\rb{\frac Nn}^{\frac{r+3}{4}} + O\rb{n^{-\frac{R_0+3}{4}}}\right) + O\rb{n^{-\frac{R+3}4}e^{\pi\sqrt{\frac n3}}}.
	\end{align*}
	As the terms with $r\ge R$ also have size $O(n^{-\frac{R+3}{4}}e^{\pi\sqrt\frac n3})$, we may truncate the asymptotic expansion, and obtain
	\begin{equation*}
		\frac{1}{2\pi i}\int_{\mc C\rb{\frac{\pi N}{2\sqrt{3n}}}} g_{\a,\b;N,\bm\ell}^{[K]}(z)e^\frac{nz}{N} dz = \frac{e^{\pi \sqrt{\frac n3}}}{2^{K+\frac12}\pi^\frac N2}\left(\sum_{r=0}^{R-1} \sum_{j=0}^{\Flo{\frac r2}} c_{\frac r2-j,\frac\pi2\sqrt{\frac{N}{3}},j} E_{\bm\ell,r-2j}\rb{\frac Nn}^{\frac{r+3}{4}} + O\rb{n^{-\frac{R+3}{4}}}\right).
	\end{equation*}
	Plugging this back into \eqref{eq:dc} gives
	\[
		\scalebox{0.92}{$\displaystyle d_{\a,\b;N}^{[K]}(n) = \frac{e^{\pi\sqrt\frac n3}}{2^{K+\frac12}\pi^\frac N2N}\left(\sum_{r=0}^{R-1} \sum_{k=0}^{N-1} \sum_{\bm\ell\in(\Z/N\Z)^N} \z^{(n-NH(\bm\ell))k}_N\sum_{j=0}^{\Flo{\frac r2}} c_{\frac r2-j,\frac\pi2\sqrt\frac N3,j}E_{\bm\ell,r-2j}\left(\frac Nn\right)^\frac{r+3}{4} + O\left(n^{-\frac{R+3}{4}}\right)\right).$}
	\]
	The theorem follows, using orthogonality of roots of unity.
\end{proof}

\section{Proof of Theorems \ref{thm:main} and \ref{thm:main2}}\label{section:pot}

Now we are ready to prove \Cref{thm:main}.

\begin{proof}[Proof of \Cref{thm:main}]
	To prove \Cref{thm:main}, it suffices to determine the first two terms in \Cref{thm:da}. We compute
	\[
		c_{0,\frac\pi2\sqrt{\frac N3},0} =\frac{N^\frac14}{2\sqrt{2}\cdot3^\frac14},\qquad 
		c_{\frac12,\frac\pi2\sqrt{\frac N3},0} = \frac14\sqrt{\frac{\pi N}{3}}. 
	\]
	We follow the proof of \Cref{prp:gK1a} to determine $E_{\bm\ell,0}$ and $E_{\bm\ell,1}$. To begin with, we compute the coefficients $C_0(\bm u)$ and $C_1(\bm u)$ in the formal power series expansion \eqref{eq:Crud}. We expand
	\[
		\exp(\phi(\bm u,z)) = \exp\rb{- \bm b^T \bm u z^{\frac12}-\frac{Nz}{24} -\sum_{j=1}^N \sum_{r\geq2} \rb{B_r\rb{-\frac{u_j}{\sqrt{z}}} - \frac{\delta_{r,2}u_j^2}{z}} \Li_{2-r}\rb{\frac 12} \frac{z^{r-1}}{r!}}.
	\]
	Noting that we have
	\begin{align*}
		&\exp\left(-\sum_{j=1}^N \sum_{r\ge2} \left(B_r\left(-\frac{u_j}{\sqrt{z}}\right)-\frac{\d_{r,2}u_j^2}{z}\right) \Li_{2-r}\left(\frac12\right)\frac{z^{r-1}}{r!}\right) = 1+\sum_{j=1}^N \rb{-\frac{u_j}{2}+\frac{u_j^3}{3}}\sqrt{z} + O(z),\\
		&\exp\rb{- \bm b^T \bm u\sqrt{z}} = 1-\sum_{j=1}^N b_ju_j\sqrt{z}+O(z) = 1+\sum_{j=1}^N \rb{-\frac j N + \frac 1 2 - e_j}u_j\sqrt{z}+O(z),
	\end{align*}
	we get
	\[
		C_0(\bm u) = 1,\qquad C_1(\bm u) = \sum_{j=1}^N \left(-\left(\frac jN+e_j\right)u_j+\frac{u_j^3}{3}\right).
	\]
	
	Next we compute the constants $V_{0,-N}$, $V_{0,1-N}$, and $V_{1,-N}$. For this we use the evaluations
	\begin{align*}
		e^{-\frac{\pi^2N}{12z}}\int_{\Rc_{\a,\b;N}} G_0(\bm u) \bm{du} &= \int_{\Rc_{\a,\b;N}} e^{-\bm u^T\bm u} \bm{du} = \frac{\pi^\frac N2}{2},\\
		e^{-\frac{\pi^2N}{12z}}\int_{\R^{N-1}} G_0\left(\bm{u_{[1]}},u_\b\right) \bm{du_{[1]}} &= \int_{\R^{N-1}} e^{-u_\b^2-\bm{u_{[1]}}^T\bm{u_{[1]}}} \bm{du_{[1]}} = \frac{\pi^\frac{N-1}{2}}{\sqrt2},
	\end{align*}
	where $\Rc_{\a,\b;N}:=\{\bm u\in\R^N:u_\a\ge u_\b\}$. Meanwhile, we compute
	\begin{align}\nonumber
		e^{-\frac{\pi^2N}{12z}} \int_{\mc R_{\alpha, \beta;N}} G_1(\bm u) \bm{du} &= \int_{\mc R_{\alpha, \beta;N}} C_1(\bm u) e^{-\bm u^T\bm u} \bm{du}\\
		&= - \sum_{j=1}^N\rb{\frac jN + e_j} \int_{\mc R_{\alpha, \beta;N}} u_j e^{-\bm u^T\bm u} \bm{du} + \frac13\sum_{j=1}^N \int_{\mc R_{\alpha, \beta;N}} u_j^3 e^{-\bm u^T\bm u} \bm{du}. \label{eq:C1_split}
	\end{align}
	We consider the first integral on the right-hand side. If $j\not\in\{\a,\b\}$, then we have
	\ba
		\int_{\mc R_{\alpha, \beta;N}} u_j e^{-\bm u^T\bm u} \bm{du} = \int_{u_\alpha>u_\beta} e^{-u_\alpha^2-u_\beta^2} du_\alpha du_\beta \int_{\R^{N-2}} u_j e^{-\bm u^T\bm u} \bm{du} = 0
	\ea
	because the rightmost integral is anti-symmetric with respect to $u_j$. On the other hand, we have
	\begin{align*}
		\int_{\Rc_{\a,\beta;N}} u_\a e^{-\bm u^T\bm u} \bm{du} &= \int_{u_\a>u_\b} u_\a e^{-u_\a^2-u_\b^2} du_\a du_\b\int_{\R^{N-2}} e^{-\bm u^T\bm u} \bm{du} = \frac{\pi^\frac{N-1}{2}}{2\sqrt{2}},\\
		\int_{\Rc_{\a,\beta;N}} u_\b e^{-\bm u^T\bm u} \bm{du} &= \int_{u_\a>u_\b} u_\b e^{-u_\a^2-u_\b^2} du_\a du_\b\int_{\R^{N-2}} e^{-\bm u^T\bm u} \bm{du} = -\frac{\pi^\frac{N-1}{2}}{2\sqrt{2}}.
	\end{align*}
	The second integral on the right of \eqref{eq:C1_split} is invariant under interchanging $u_\a$ and $u_\b$. Hence
	\[
		\sum_{j=1}^N \int_{\Rc_{\a,\beta;N}} u_j^3e^{-\bm u^T\bm u} \bm{du} = \frac12\sum_{j=1}^N \int_{\R^N} u_j^3e^{-\bm u^T\bm u} \bm{du} = 0,
	\]
	since the integral is anti-symmetric. Plugging into \eqref{eq:C1_split}, it follows that
	\[
		e^{-\frac{\pi^2N}{12z}} \int_{\mc R_{\alpha, \beta;N}} G_1(\bm u) \bm{du} = \rb{\frac{\beta-\alpha}N + (e_\beta-e_\alpha)} \frac{\pi^{\frac{N-1}2}}{2\sqrt{2}}.
	\]
	It follows from \eqref{eq:VjN} and \eqref{eq:Vj1N} (and \eqref{eq:W0} for $W_0$) that
	\[
		V_{0,-N} = \frac{\pi^\frac N2}{2N^N},\quad V_{0,1-N} = \frac{\pi^\frac{N-1}{2}}{\sqrt2N^{N-1}}\left(\frac12-\frac{[\ell_\a-\ell_\b]_N}N\right),\quad V_{1,-N} = \frac{\pi^\frac{N-1}{2}(\b-\a+N(e_\b-e_\a))}{2\sqrt2N^{N+1}}.
	\]
	
	Finally, using \eqref{eq:Elrd}, we compute
	\ba
		E_{\bm\ell,0} &= V_{0,-N} = \frac{\pi^{\frac N2}}{2N^N},\\ 
		E_{\bm\ell,1} &= V_{0,1-N} + V_{1,-N} = \frac{\pi^\frac{N-1}{2}}{2\sqrt{2}N^{N-1}} \left(1-\frac{2[\ell_\a-\ell_\b]_N}{N}+\frac{\b-\a+N(e_\b-e_\a)}{N^2}\right).
	\ea
	\Cref{thm:main} then follows from \Cref{thm:da}.
\end{proof}

To prove \Cref{thm:main2}, we need the following lemma, which follows by a direct calculation.

\begin{lem}\label{lem:lc}
	Let $N\ge5$, $1\le\a,\b\le N$, and $r,\ell_\a,\ell_\b\in\Z/N\Z$. Then
	\ba
		\#\cb{\bm\ell_{\bm{[2]}}\in(\Z/N\Z)^{N-2} : NH\rb{\bm\ell_{\bm{[2]}},\ell_\alpha,\ell_\beta}\equiv r\pmod{N}} = N^{N-3},
	\ea
	where $\bm\ell_{\bm{[2]}}\in(\Z/N\Z)^{N-2}$ runs through the indices $j\not\in\{\alpha, \beta\}$. 
\end{lem}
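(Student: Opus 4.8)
The plan is to reduce the congruence condition to a single linear equation over $\Z/N\Z$ in the $N-2$ free coordinates and then count its solutions by a standard group‑theoretic argument. First I would simplify $NH$ modulo $N$: writing $NH(\bm n)=\frac N2\bm n^T\bm n+N\bm b^T\bm n$ and using $Nb_j=j-\frac N2+Ne_j$ with $e_j\in\Z$, one gets
\[
	NH(\bm n)=\sum_{j=1}^N\rb{\tfrac N2 n_j(n_j-1)+jn_j}+N\sum_{j=1}^N e_jn_j\equiv\sum_{j=1}^N jn_j\pmod N,
\]
since $n_j(n_j-1)$ is always even, so $\frac N2 n_j(n_j-1)\in N\Z$, and the final sum lies in $N\Z$. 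In particular $NH(\bm n)\bmod N$ depends only on $\bm n\bmod N$, so the count in the lemma is well defined, and for $\bm n=\rb{\bm\ell_{\bm{[2]}},\ell_\alpha,\ell_\beta}$ the condition $NH(\bm n)\equiv r\pmod N$ becomes the linear congruence $\sum_{j\notin\{\alpha,\beta\}}j\ell_j\equiv r-\alpha\ell_\alpha-\beta\ell_\beta\pmod N$.

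Next I would apply the elementary fact that the homomorphism $L\colon(\Z/N\Z)^{N-2}\to\Z/N\Z$, $L\rb{(\ell_j)_j}=\sum_{j\notin\{\alpha,\beta\}}j\ell_j$, has image the cyclic subgroup of order $N/d$, where $d:=\gcd\rb{\{j:1\le j\le N,\ j\ne\alpha,\beta\}\cup\{N\}}$; hence when $d=1$ the map $L$ is onto and every fibre has size $N^{N-2}/N=N^{N-3}$. Since $r-\alpha\ell_\alpha-\beta\ell_\beta$ is a fixed element of $\Z/N\Z$, the quantity in the lemma is precisely the cardinality of one such fibre, namely $N^{N-3}$, regardless of $r$, $\ell_\alpha$, $\ell_\beta$.

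The last step is to check $d=1$ when $N\ge5$; in fact it suffices to show that no prime $p$ divides every element of $S:=\{1,\dots,N\}\setminus\{\alpha,\beta\}$. If one did, then $S$ would be contained in the multiples of $p$ inside $\{1,\dots,N\}$, a set of $\flo{N/p}\le N/p$ elements, so $N-2=|S|\le N/p$, forcing $(p-1)N\le2p$ and hence $N\le\frac{2p}{p-1}\le4$ — contradicting $N\ge5$. (This also explains the hypothesis: for $N=4$ and $\{\alpha,\beta\}=\{1,3\}$ one has $S=\{2,4\}$, so $d=2$ and the count fails.) None of these steps is a serious obstacle; the only points needing a little care are the reduction $NH(\bm n)\equiv\sum_{j}jn_j\pmod N$ (which rests on $n_j(n_j-1)$ being even) and the short gcd estimate, which is where the precise bound $N\ge5$ enters.
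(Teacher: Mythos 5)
Your proof is correct, and it carries out exactly the ``direct calculation'' the paper asserts without detail: reducing $NH(\bm n)\equiv\sum_j j n_j\pmod N$ (using that $n_j(n_j-1)$ is even and $Nb_j\equiv j-\frac N2\pmod N$ up to the integer vector $\bm e$), and then counting fibres of the resulting linear map on $(\Z/N\Z)^{N-2}$, with the gcd estimate showing surjectivity precisely when $N\ge5$. The identification of where the hypothesis $N\ge5$ enters (and the failure for $N=4$, $\{\alpha,\beta\}=\{1,3\}$) matches the paper's restriction of \Cref{thm:main2} to $N=2$ or $N\ge5$.
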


Now we are ready to prove \Cref{thm:main2}.

\begin{proof}[Proof of \Cref{thm:main2}]
	The case $N=2$ can be verified directly from \Cref{thm:main}. Now suppose that $N\ge5$. By \Cref{lem:lc}, for every $r,\ell_\a,\ell_\b\in\Z/N\Z$, there exist $N^{N-3}$ tuples {$\bm\ell\in(\Z/N\Z)^N$} such that $NH(\bm\ell)\equiv r\Pmod N$. So we may evaluate the inner sum in \Cref{thm:main} as follows:
	\begin{align*}
		&\sum_{\substack{\bm\ell\in(\Z/N\Z)^N\\ NH(\bm\ell)\equiv r\pmod{N}}} \rb{1+\frac{N^2-2{ N}[\ell_\alpha-\ell_\beta]_N+ \beta-\alpha+ N\rb{e_\beta - e_\alpha}}{2\cdot 3^\frac14\sqrt{N}}n^{-\frac 14}}\\
		&\hspace{2cm}= N^{N-1}\rb{1+\frac{{-N}+\beta-\alpha+ N \rb{e_\beta - e_\alpha}}{2\cdot 3^\frac14\sqrt{N}}n^{-\frac 14}}.\qedhere
	\end{align*}
\end{proof}

\section{Numerical examples}\label{section:nx} 

We provide numerical data supporting our statements. All computations were done in PARI/GP and the plots were created in Sage \cite{PARI2,sage}.

\subsection{{Numerical data for $\bm{N=2}$}}

We provide some numerical data for the parity bias problem.

\begin{example}
	Let $N=2$, $\{\a,\b\}=\{1,2\}$, and $K\in\{0,1\}$, corresponding to the claims in \cite{BBDMS,KKL2020}.
	The numbers $d^{[K]}_{1,2;2}(n)$, $d^{[K]}_{2,1;2}(n)$ and their difference for $0\le n\le50$ are given in \Cref{table:k0} for $K=0$ and in \Cref{table:k1} for $K=1$. The differences $d^{[K]}_{1,2;2}(n)-d^{[K]}_{2,1;2}(n)$ are plotted for $0\le n\le100$ in \Cref{figureN2}. For $K=0$ we observe, in accordance with \cite{BBDMS,KKL2020}, that $d^{[0]}_{1,2;2}(n)>d^{[0]}_{2,1;2}(n)$ for $n\ge20$. For $K=1$, we note that the numbers for $13\le n\le29$ agree with the claim in \cite{BBDMS}, Problem 6.1, i.e.,
	\begin{equation*}
		d^{{[1]}}_{1,2;2}(n)-d^{{[1]}}_{2,1;2}(n)
		\begin{cases}
			>0 &\text{if $n$ is even,}\\
			<0 &\text{if $n$ is odd},
		\end{cases}
	\end{equation*}
	whereas numerics suggests that for all $n\ge29$ we have $d_{1,2;2}^{[1]}(n)-d_{2,1;2}^{[1]}(n)<0$. This was checked numerically up to $n\le10000$ which took $5$ minutes using PARI/GP on an Apple M1 Pro chip.
	
	\begin{figure}[H]
		\includegraphics[width=0.68\textwidth]{./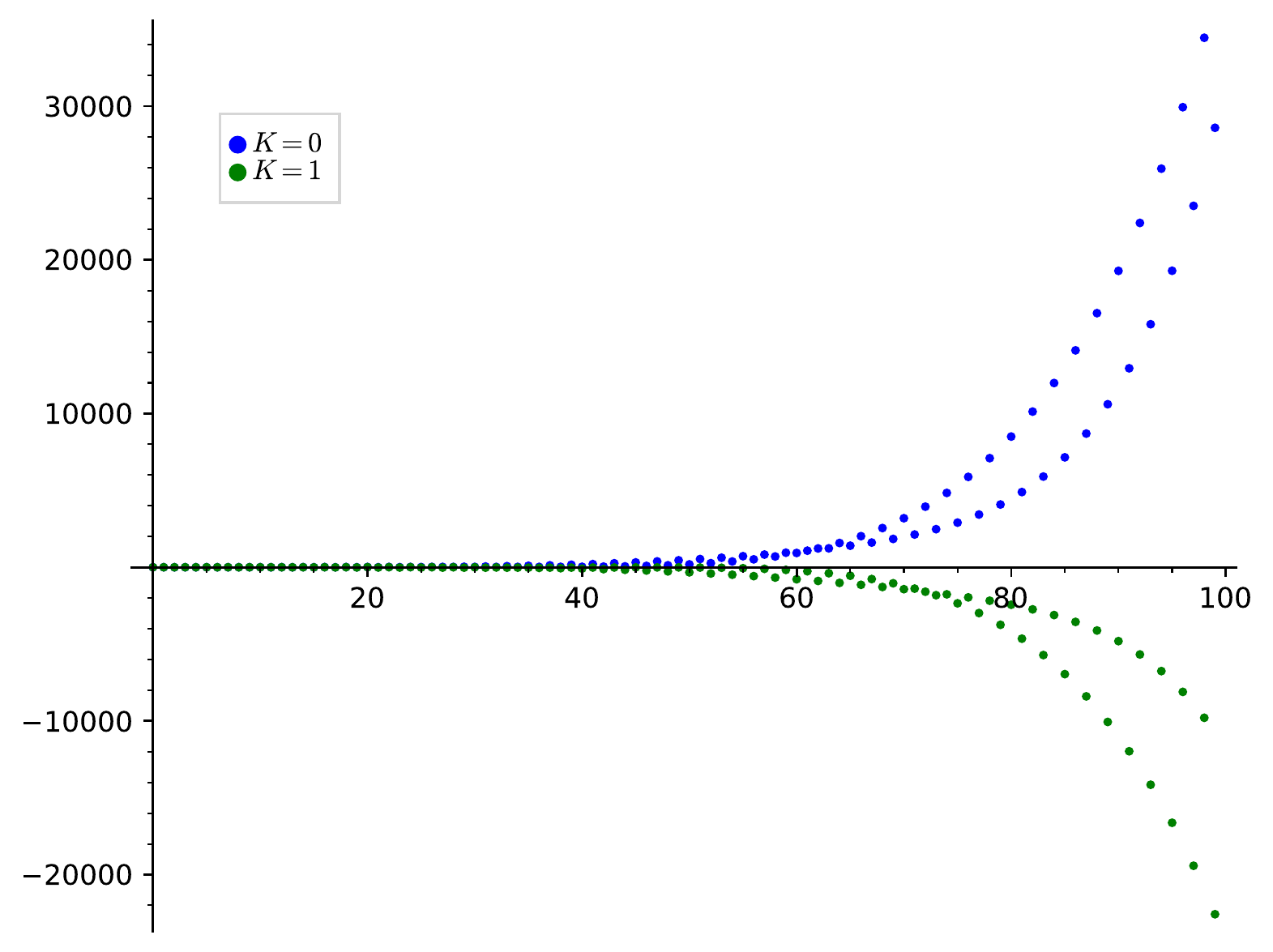}
		\caption{$d^{[K]}_{1,2;2}(n)-d^{[K]}_{2,1;2}(n)$ for $0\leq n\leq 100$, $K\in\{0,1\}$}
		\centering
		\label{figureN2}
	\end{figure}

	Furthermore, the numbers $(d_{1,2;2}^{[K]}(n)-d_{2,1;2}^{[K]}(n))ne^{-\pi\sqrt\frac n3}$ are plotted in \Cref{figureN2_ratio} for $10\le n\le5000$. Supporting \Cref{cor:pb}, the figure suggests that they converge to $(-1)^K2^{\frac72-K}3^{-\frac12}$, independently of $n\Pmod2$.
	
	\begin{figure}[H]
		\includegraphics[width=0.68\textwidth]{./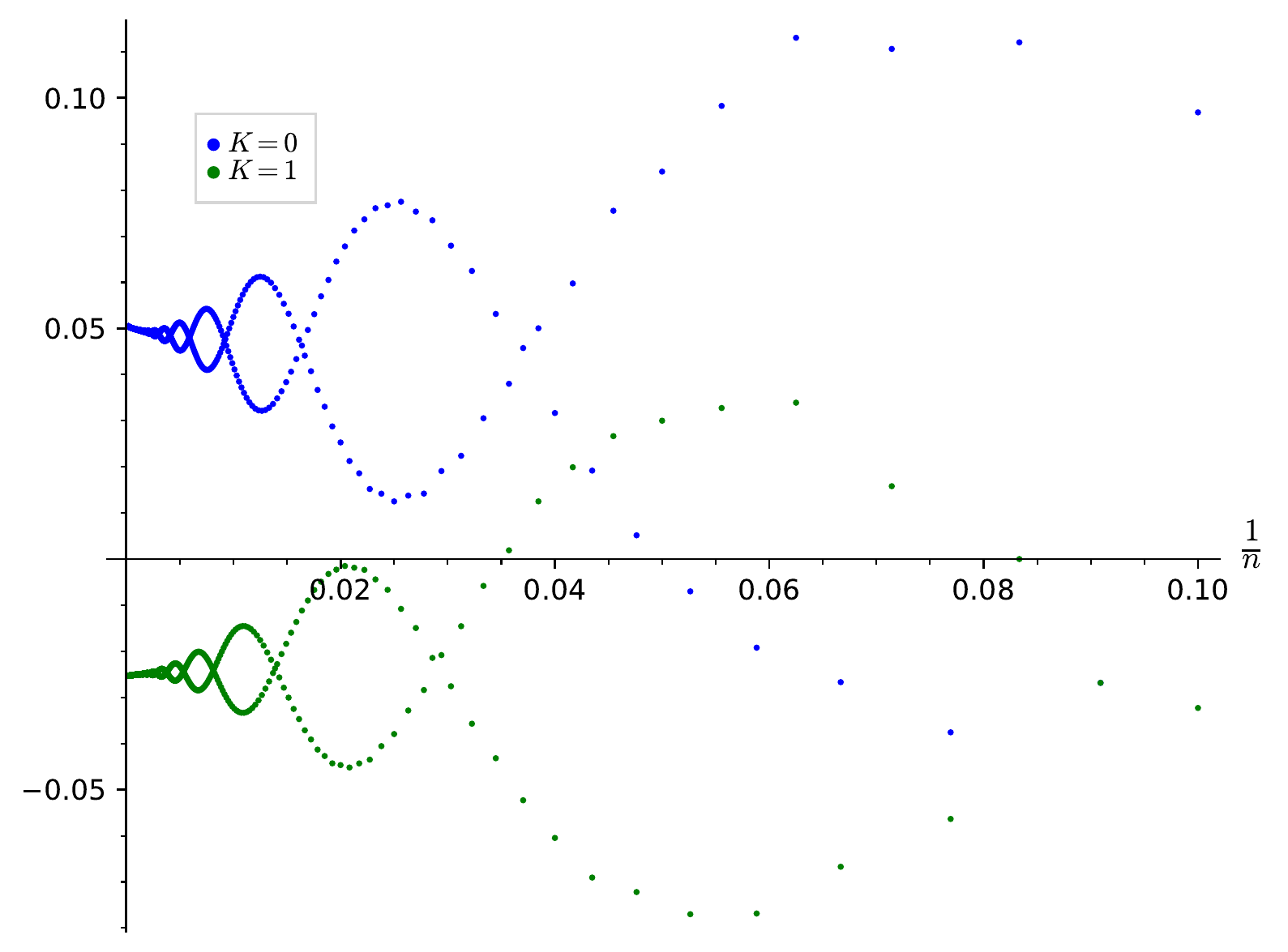}
		\caption{$(\frac1n,\; (d_{1,2;2}^{[K]}(n) - d_{2,1;2}^{[K]}(n) ) n e^{-\pi\sqrt{\frac n3}})$ for $10\leq n\leq 5000$, $K\in\{0,1\}$}
		\centering
		\label{figureN2_ratio}
	\end{figure}
	
	\centering
	\begin{table}[H]
		\begin{tabular}{c | cccccccccccccccccccc}
			$n$ & $1$ & $2$ & $3$ & $4$ & $5$ & $6$ & $7$ & $8$ & $9$ & $10$ & $11$ & $12$ & $13$ & $14$ & $15$ & $16$ & $17$ & $18$ \\\hline
			\rule{0pt}{5mm}
			$d^{[0]}_{1,2;2}(n)$ & $1$ & $0$ & $1$ & $1$ & $1$ & $2$ & $1$ & $4$ & $2$ & $6$ & $3$ & $9$ & $5$ & $12$ & $9$ & $17$ & $14$ & $22$ \\
			$d^{[0]}_{2,1;2}(n)$ & $0$ & $1$ & $0$ & $1$ & $0$ & $2$ & $1$ & $2$ & $2$ & $3$ & $4$ & $4$ & $7$ & $5$ & $11$ & $7$ & $16$ & $10$ \\
			$d^{[0]}_{1,2;2}(n)-d^{[0]}_{2,1;2}(n)$& $1$ & $-1$ & $1$ & $0$ & $1$ & $0$ & $0$ & $2$ & $0$ & $3$ & $-1$ & $5$ & $-2$ & $7$ & $-2$ & $10$ & $-2$ & $12$ \\
		\end{tabular}\vspace{20pt}
		\begin{tabular}{ccccccccccccccccc}
			$19$ & $20$ & $21$ & $22$ & $23$ & $24$ & $25$ & $26$ & $27$ & $28$ & $29$ & $30$ & $31$ & $32$ & $33$ & $34$ & $35$ \\\hline
			$22$ & $29$ & $33$ & $38$ & $48$ & $50$ & $68$ & $65$ & $95$ & $86$ & $128$ & $113$ & $172$ & $149$ & $226$ & $197$ & $295$ \\
			$23$ & $15$ & $32$ & $21$ & $43$ & $32$ & $57$ & $45$ & $74$ & $66$ & $96$ & $92$ & $123$ & $129$ & $157$ & $175$ & $199$ \\
			$-1$ & $14$ & $1$ & $17$ & $5$ & $18$ & $11$ & $20$ & $21$ & $20$ & $32$ & $21$ & $49$ & $20$ & $69$ & $22$ & $96$ \\
		\end{tabular}\vspace{20pt}
		\begin{tabular}{ccccccccccccccc}
			$36$ & $37$ & $38$ & $39$ & $40$ & $41$ & $42$ & $43$ & $44$ & $45$ & $46$ & $47$ & $48$ & $49$ & $50$ \\\hline
			$260$ & $379$ & $342$ & $485$ & $449$ & $613$ & $587$ & $773$ & $762$ & $967$ & $987$ & $1206$ & $1269$ & $1497$ & $1623$ \\
			$239$ & $253$ & $316$ & $320$ & $419$ & $406$ & $544$ & $514$ & $704$ & $652$ & $898$ & $825$ & $1142$ & $1045$ & $1435$ \\
			$21$ & $126$ & $26$ & $165$ & $30$ & $207$ & $43$ & $259$ & $58$ & $315$ & $89$ & $381$ & $127$ & $452$ & $188$ \\
		\end{tabular}
		\caption{Numerics for $K=0$.}
		\label{table:k0}
	\end{table}

	\newpage

	\begin{table}[H]
		\begin{tabular}{c | ccccccccccccccccccc}
			$n$ & $1$ & $2$ & $3$ & $4$ & $5$ & $6$ & $7$ & $8$ & $9$ & $10$ & $11$ & $12$ & $13$ & $14$ & $15$ & $16$ & $17$ \\\hline
			\rule{0pt}{5mm}
			$d^{{[1]}}_{1,2;2}(n)$ & $0$ & $0$ & $1$ & $0$ & $1$ & $0$ & $1$ & $1$ & $1$ & $2$ & $1$ & $4$ & $1$ & $6$ & $2$ & $9$ & $3$ \\
			$d^{{[1]}}_{2,1;2}(n)$ & $0$ & $1$ & $0$ & $1$ & $0$ & $2$ & $0$ & $2$ & $1$ & $3$ & $2$ & $4$ & $4$ & $5$ & $7$ & $6$ & $11$\\
			$d^{[1]}_{1,2;2}(n)-d^{[1]}_{2,1;2}(n)$ & $0$ & $-1$ & $1$ & $-1$ & $1$ & $-2$ & $1$ & $-1$ & $0$ & $-1$ & $-1$ & $0$ & $-3$ & $1$ & $-5$ & $3$ & $-8$ \\
		\end{tabular}\vspace{20pt}
		\begin{tabular}{cccccccccccccccccc}
			$18$ & $19$ & $20$ & $21$ & $22$ & $23$ & $24$ & $25$ & $26$ & $27$ & $28$ & $29$ & $30$ & $31$ & $32$ & $33$ & $34$ & $35$ \\\hline
			$12$ & $5$ & $16$ & $9$ & $20$ & $14$ & $26$ & $22$ & $32$ & $33$ & $40$ & $48$ & $50$ & $67$ & $63$ & $93$ & $79$ & $125$ \\
			$8$ & $16$ & $11$ & $23$ & $14$ & $32$ & $20$ & $43$ & $27$ & $57$ & $39$ & $74$ & $54$ & $95$ & $76$ & $121$ & $103$ & $153$ \\
			$4$ & $-11$ & $5$ & $-14$ & $6$ & $-18$ & $6$ & $-21$ & $5$ & $-24$ & $1$ & $-26$ & $-4$ & $-28$ & $-13$ & $-28$ & $-24$ & $-28$ \\
		\end{tabular}\vspace{20pt}
		\begin{tabular}{ccccccccccccccc}
			$36$ & $37$ & $38$ & $39$ & $40$ & $41$ & $42$ & $43$ & $44$ & $45$ & $46$ & $47$ & $48$ & $49$ & $50$ \\\hline
			$101$ & $166$ & $129$ & $216$ & $166$ & $279$ & $215$ & $354$ & $278$ & $448$ & $360$ & $559$ & $467$ & $695$ & $603$ \\
			$143$ & $191$ & $191$ & $239$ & $257$ & $297$ & $338$ & $369$ & $444$ & $458$ & $572$ & $569$ & $737$ & $705$ & $935$ \\
			$-42$ & $-25$ & $-62$ & $-23$ & $-91$ & $-18$ & $-123$ & $-15$ & $-166$ & $-10$ & $-212$ & $-10$ & $-270$ & $-10$ & $-332$ \\
		\end{tabular}
		\caption{Numerics for $K=1$.}
		\label{table:k1}
	\end{table}
\end{example}

\subsection{{Numerical data for $\bm{N=3}$}}

We give numerical data to illustrate Corollary \ref{cor:N3}.

\begin{example}
	We consider $N=3$ and $K=0$. The first $17$ values for $d_{1,2;3}^{[0]}(n)$ and $d_{1,2;3}^{[0]}(n)$, and their difference are listed in \Cref{table:N3k0}. \Cref{figureN3} depicts the difference $d_{1,2;3}^{[0]}(n)-d_{2,1;3}^{[0]}(n)$ for $0\le n\le100$. Moreover, the numbers $(d_{1,2;3}^{[0]}(n)-d_{2,1;3}^{[0]}(n))ne^{-\pi\sqrt\frac n3}$ are plotted for $10\le n\le1000$ in \Cref{figureratN3}. As pointed out above, we observe that the asymptotics of the difference indeed depend on $n\Pmod3$.

	\begin{table}[H]
		\begin{tabular}{c | cccccccccccccccccccc}
			$n$ & $1$ & $2$ & $3$ & $4$ & $5$ & $6$ & $7$ & $8$ & $9$ & $10$ & $11$ & $12$ & $13$ & $14$ & $15$ & $16$ & $17$ \\\hline
			\rule{0pt}{5mm}
			$d^{{[0]}}_{1,2;3}(n)$ & $1$ & $0$ & $0$ & $2$ & $1$ & $0$ & $4$ & $2$ & $0$ & $8$ & $4$ & $1$ & $14$ & $8$ & $2$ & $24$ & $14$ \\
			$d^{{[0]}}_{2,1;3}(n)$ & $0$ & $1$ & $0$ & $0$ & $2$ & $0$ & $1$ & $4$ & $0$ & $2$ & $8$ & $0$ & $4$ & $14$ & $1$ & $8$ & $24$ \\
			$d^{{[0]}}_{1,2;3}(n)-d^{{[0]}}_{2,1;3}(n)$ & $1$ & $-1$ & $0$ & $2$ & $-1$ & $0$ & $3$ & $-2$ & $0$ & $6$ & $-4$ & $1$ & $10$ & $-6$ & $1$ & $16$ & $-10$ \\
		\end{tabular}
		\caption{Numerics for $N=3$, $K=0$.}
		\label{table:N3k0}
	\end{table}
\end{example}

\newpage

\begin{figure}[H]
	\includegraphics[width=0.7\textwidth]{./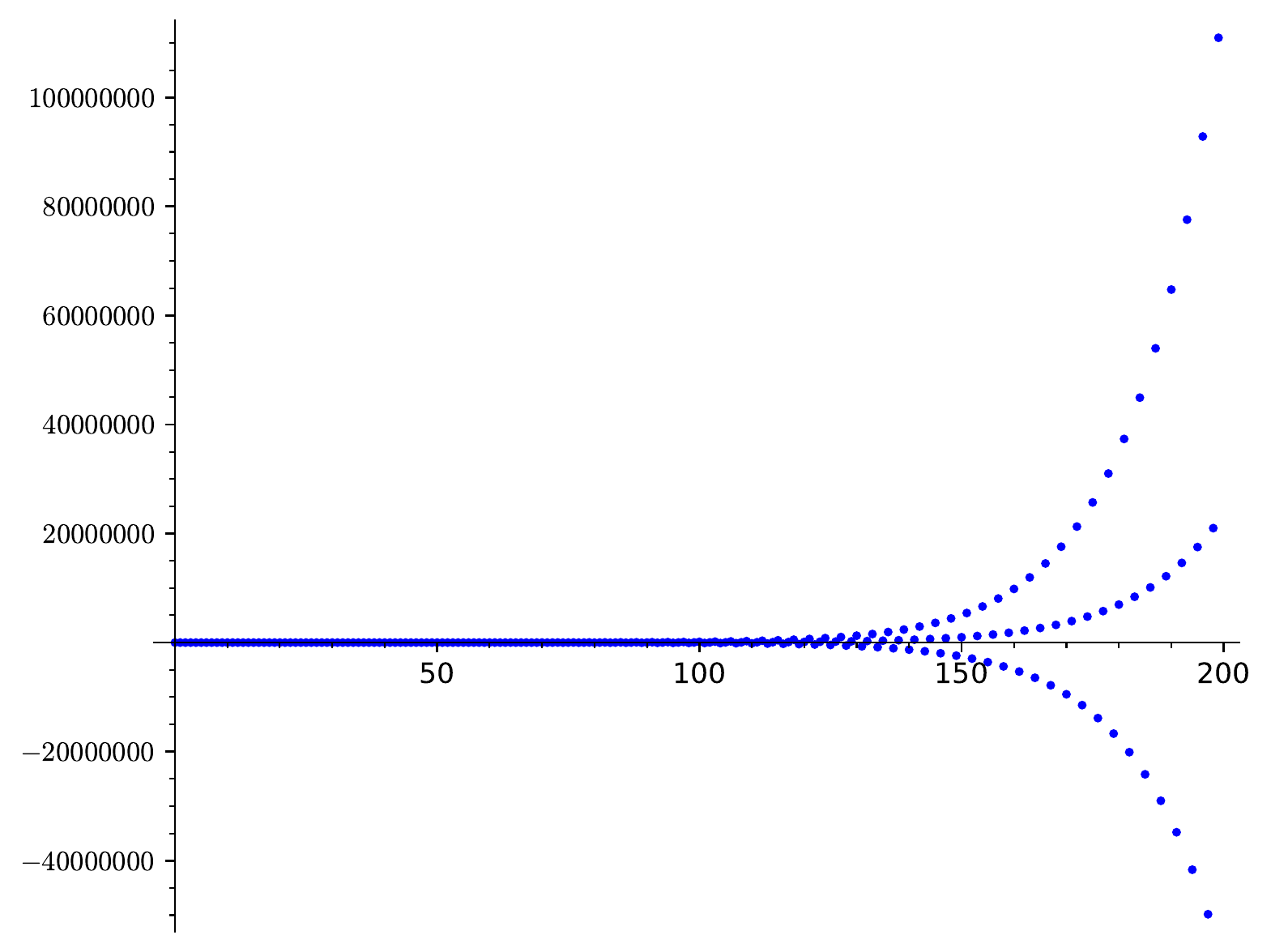}
	\caption{$d^{{[0]}}_{1,2;3}(n)-d^{{[0]}}_{2,1;3}(n)$ for $0\leq n\leq 100$}
	\centering
	\label{figureN3}
\end{figure}

\begin{figure}[H]
	\includegraphics[width=0.7\textwidth]{./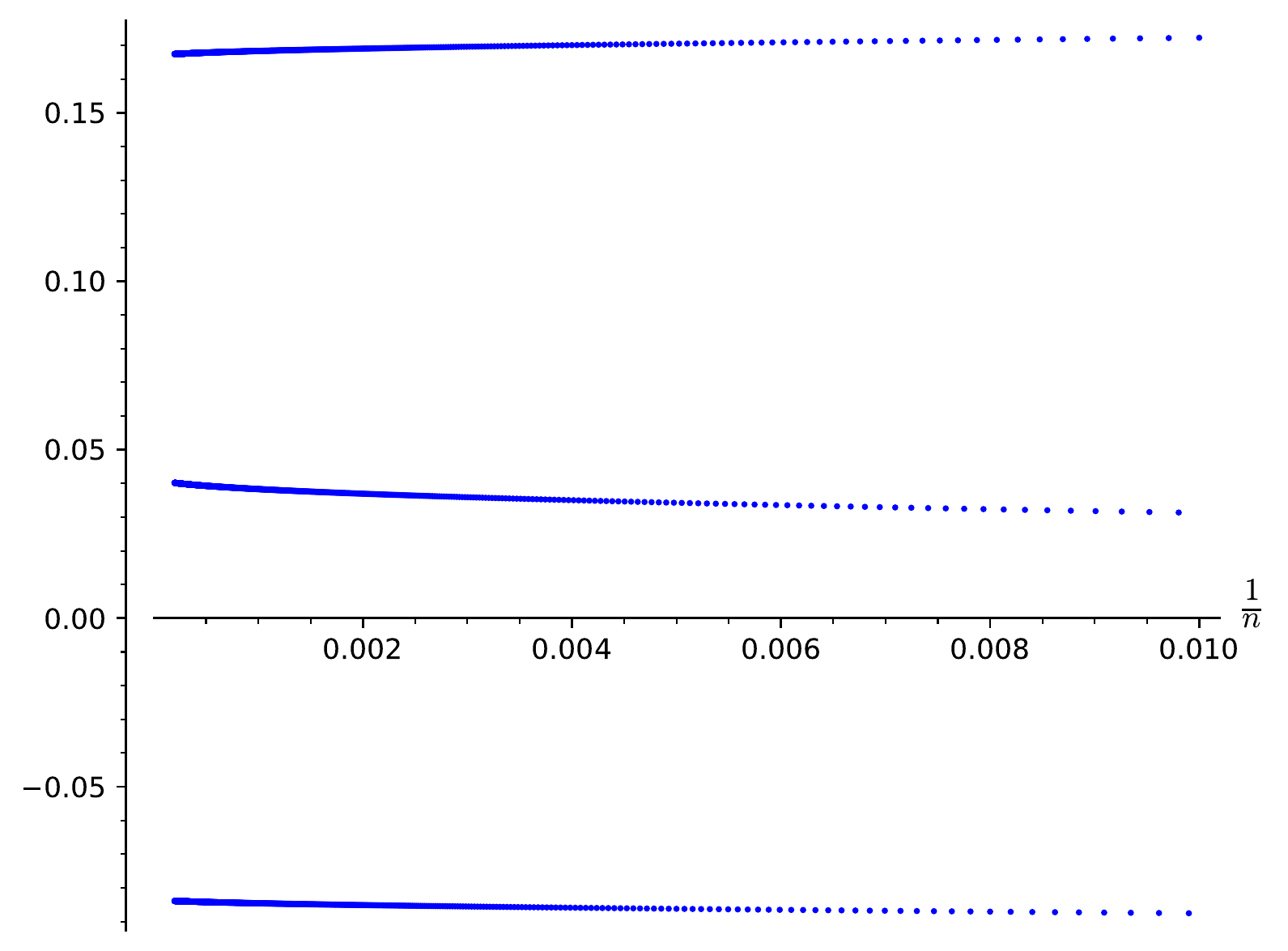}
	\caption{$(\frac1n,\; (d_{1,2;3}^{[0]}(n) - d_{2,1;3}^{[0]}(n) ) n e^{-\pi\sqrt{\frac n3}})$ for $10\leq n\leq 1000$}
	\centering
	\label{figureratN3}
\end{figure}

\end{document}